\newcommand{\bitrace}{\text{bitrace}}
\numberwithin{equation}{subsection}
\newtheorem{theorem}{Theorem}[section]
\newtheorem{corollary}[theorem]{Corollary}
\newtheorem{lemma}[theorem]{Lemma}
\newtheorem{proposition}[theorem]{Proposition}
\theoremstyle{definition}
\theoremstyle{remark}
\newtheorem{example}[theorem]{Example}
\DeclareMathOperator{\Res}{Res}
\DeclareMathOperator{\Ind}{Ind}
\DeclareMathOperator{\triv}{triv}
\DeclareMathOperator{\End}{End}
\newcommand{\C}{ \mathbb C}
\title[Semi-simple partition algebras as centralizers  for rook  monoids]{Semi-simple 
partition algebras as centralizers  of 
representations for rook monoids}
\author[Volodymyr Mazorchuk]{Volodymyr Mazorchuk}
\address{Uppsala Universitet, Sweden}
\email{mazor@math.uu.se}
\author[Shraddha Srivastava]{Shraddha Srivastava}
\address{Indian Institute of Technology, Dharwad, 580011}
\email{maths.shraddha@gmail.com}
\date{\today}
\begin{document}

\begin{abstract}
Let $\mathcal{P}_k(\delta)$, where $k$ is a positive integer and $\delta$  
some complex parameter, be the classical partition algebra over 
the complex numbers. In the case when $\delta=n$,
it is well-known that the algebra $\mathcal{P}_k(\delta)$  
is the centralizer of the symmetric group $S_n$ acting on the 
$k$-fold tensor space of the natural representation of $S_n$, 
for $n\geq 2k$. The algebra $\mathcal{P}_k(\delta)$ is semi-simple
for generic values of $\delta$. In this paper, we show that 
semi-simple partition algebras appear as the centralizer algebras 
for certain representations of the rook monoids given by an 
iterative  restriction-induction of the trivial representation. 
Along the way, we also give a decomposition of this 
iterative representation of the rook monoid into various 
tensor spaces and show that the corresponding dimensions are given by 
generalized Bell numbers. 
\end{abstract}

\subjclass[2020]{Primary: 18M05, 05E05; Secondary: 05A18, 20M30}
\keywords{partition algebra;  dual symmetric inverse monoid; rook  monoid;
representation; Bell number; generalized Bell number }
\maketitle

\section{Introduction}

The classical Schur--Weyl duality between the symmetric and the general linear groups,
see \cite{Schur,Weyl},
is a cornerstone of modern representation theory. Over the years, many variations
and generalizations of Schur--Weyl duality have been established, leading to 
emergence of the so-called diagram algebras. One such classical generalization
is the Schur--Weyl duality between the symmetric group and the partition algebra,
a prototypical example of diagram algebras, independently discovered by Jones and Martin,
see \cite{Jones,Martin}.

The partition algebra $\mathcal{P}_k(\delta)$, when defined  over the complex  
numbers,  depends on two parameters, namely, a positive integer  $k$ and 
a complex number $\delta$. The algebra $\mathcal{P}_k(\delta)$ has a
distinguished basis formed by all set-partitions of a set consisting of 
$2k$ elements. These set partitions are usually drawn as partition diagrams
(with $k$ elements in the top row and $k$ elements in the bottom row).
The parameter $\delta$ comes into play in the multiplication rule: when 
multiplying two set partitions, they are concatenated in a certain way 
and the outcome is transformed into a new set partition, up to some
possible ``redundant'' parts. The number of these parts determines the
power $\delta$ that appears as a scalar factor in the product.
Having a basis consisting of set partitions, the dimension of 
$\mathcal{P}_k(\delta)$ is given by the Bell number $B(2k)$, which counts
the number of such set partitions. 

It is well-known, see \cite{Martin96,13}, that
the algebra $\mathcal{P}_k(\delta)$ is semi-simple provided that 
the parameter $\delta$ satisfies
$\delta\not\in\{0,1,2,\ldots, 2k-2\}$. Consequently, 
if $\delta = n\geq 2k$, then the algebra $\mathcal{P}_k(n)$
is isomorphic to the endomorphism algebra of 
$V_n^{\otimes k}$, where $V_n$ is the natural representation 
$\mathbb{C}^n$ of the symmetric group $S_n$. 

This natural representation $V_n$ admits an interesting functorial
interpretation. Consider the category $S_n$-mod of all finite-dimensional
$S_n$-modules. We can view $S_{n-1}$ as a subgroup of $S_n$ in the usual
way. Then we have the usual pair of biadjoint functors, the 
restriction $\Res^{S_n}_{S_{n-1}}$ and the induction 
$\Ind^{S_n}_{S_{n-1}}$, between the categories $S_n$-mod and $S_{n-1}$-mod:
\begin{displaymath}
\xymatrix{
S_n\text{-}\mathrm{mod}\ar@/^3mm/[rrrr]^{\Res^{S_n}_{S_{n-1}}}&&&&
S_{n-1}\text{-}\mathrm{mod}
\ar@/^3mm/[llll]^{\Ind^{S_n}_{S_{n-1}}}
}
\end{displaymath}
In particular, we have the endofunctor
$\mathrm{F}$ of $S_n$-mod given by the composition
$\Ind^{S_n}_{S_{n-1}}\circ \Res^{S_n}_{S_{n-1}}$.
It turns out that the functor $\mathrm{F}$ is isomorphic to 
the functor $V_n\otimes_{\mathbb{C}}{}_-$ of tensoring with
$V_n$. In particular,
the $k$-th power $\mathrm{F}^{k}$ is isomorphic to
tensoring with $V_n^{\otimes k}$. By evaluating such an isomorphism
at the trivial $S_n$-module $\mathbb{C}_{\triv}$,
tensoring with which is isomorphic to the 
identity endofunctor of $S_n$-mod, we obtain
\begin{displaymath}
V_n^{\otimes k}\cong 
(\Ind^{S_n}_{S_{n-1}}\circ\Res^{S_n}_{S_{n-1}})^k(\mathbb{C}_{\triv})
=\mathrm{F}^k(\mathbb{C}_{\triv}).
\end{displaymath}

Let $R_n$ denote the set consisting of partial permutation matrices of 
size $n\times n$. With respect to matrix multiplication,  $R_n$ 
becomes a monoid, known as the rook monoid (also known as the
symmetric inverse semigroup or monoid), see \cite{Grood,Solomon,GM,Wagner}. 
There is also a dual object
(even in a certain categorical sense, see \cite{KMb}), 
called the dual symmetric inverse monoid
and denoted $I_n^*$,
introduced in \cite{EL93,EL95}. This monoid is a submonoid of 
$\mathcal{P}_n(\delta)$ (regardless of $\delta$)
that consists of all partition diagrams 
in which each part has a non-trivial intersection with both the top 
and the bottom rows of the diagram. Such parts are called propagating,
so $I_n^*$ is exactly the set of all partition diagrams in
which all parts are propagating. 
Both $R_n$ and $I_n^*$ generalize $S_n$ and both are inverse monoids,
which is a natural class of monoids that are very close to groups.
There is a Schur-Weyl duality relating $R_n$ with $I_k^*$,
established in \cite{5}. The monoids $R_n$ and $I_n^*$
are intensively studied, see \cite{FL98,Pa,11,9,MS25,AM,KM09} 
and references therein.
They are a rich source of interesting
examples and phenomena. In the present paper we will show that the 
representation theory of these monoids is also closely connected to the
representation theory of partition algebras.

Just like the case of symmetric groups, the rook monoid $R_{n-1}$
is, naturally, a submonoid of $R_n$. In particular, the monoid algebra
of $R_{n-1}$ is a unital subalgebra of the monoid algebra of $R_n$.
Therefore we again can talk about induction and restriction between
the corresponding module categories. The starting point for the present
paper was the observation that, iterating the composition of 
these restriction and induction and applying it to the trivial 
representation of $R_n$ produces precisely the Bratteli diagram for 
the multiplicity-free  tower of partition algebras, which can be found in \cite{13}. 

Let us describe this in more detail.
The irreducible representations of $S_n$ over the complex numbers are
indexed by partitions of $n$, see \cite{Sa}. For $\lambda\vdash n$, we have the
corresponding Specht module ${S}^\lambda$. The classical
branching rule asserts that, for $\lambda\vdash n$, 
the restriction of ${S}^\lambda$ to $S_{n-1}$
is a multiplicity free direct sum of those ${S}^\mu$,
where $\mu\vdash n-1$, for which
$\mu$ is obtained from $\lambda$ by removing a removable node in the
Young diagram of $\lambda$. 

The irreducible representations of $R_n$ are indexed by all 
partitions of all $i$, where $0\leq i\leq n$.  For $\lambda\vdash i$,
we denote the corresponding simple module by $R_n^\lambda$. 
The trivial representation  of $R_n$ is $R^{\varnothing}_{n}$, it
corresponds to the trivial partition of $i=0$ and the associated 
Young diagram is usually denoted by $\varnothing$. The branching rule
for $R_n$ is very similar to the one for $S_n$. Restricting 
$R_n^\lambda$ to $R_{n-1}$ is again multiplicity free and, for any
$\mu$ obtained from $\lambda$ be removing a removable node, 
the corresponding $R_{n-1}^\mu$ appears as a summand. The only 
difference is that, if $\lambda\vdash i<n$, then 
$R_{n-1}^\lambda$ appears as well.

Now it is easy to write down the Bratelli diagram for 
an iterated restriction-induction for rook monoids applied to the
trivial module, see Figure~\ref{fig1}, where each simple
module is represented by its Young diagram. Comparing with 
the  Bratteli diagram for  the multiplicity-free  tower of 
partition algebras, one naturally arrives to the question 
whether it is indeed the partition algebra that controls
the endomorphisms of the obtained module. The main aim of
the present paper is to give a positive answer to 
this question.

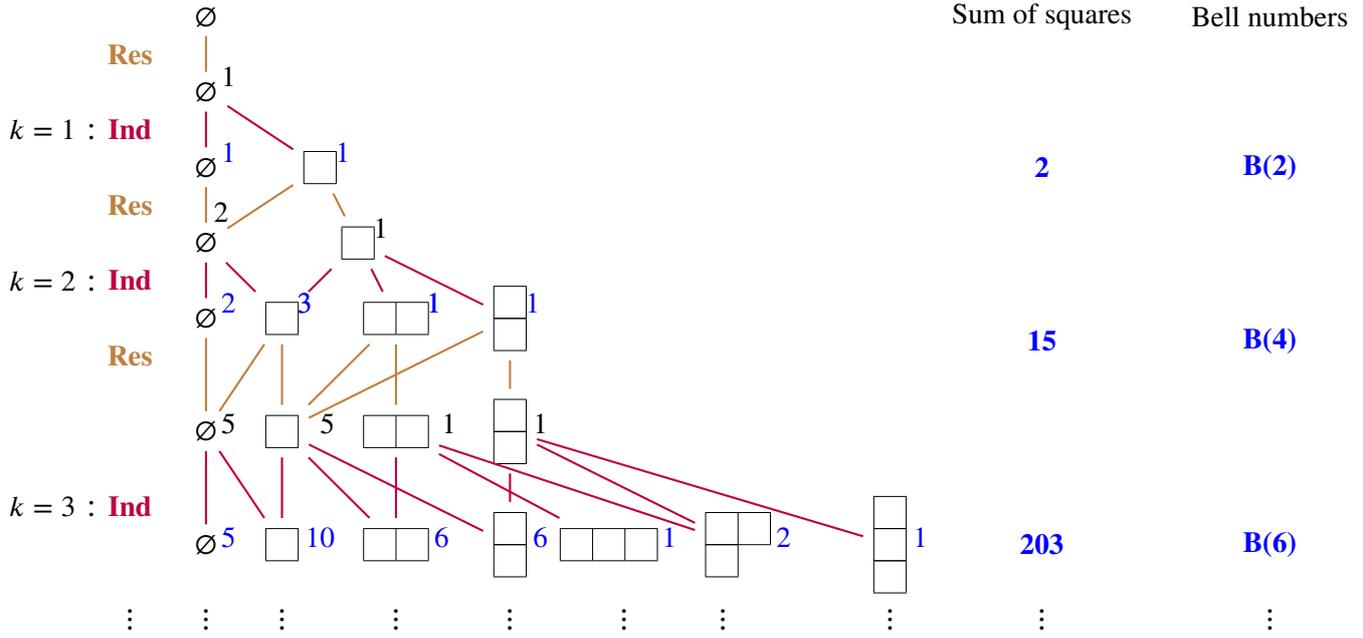
\begin{figure}
\centering
\begin{tikzpicture}
    \node (n1) at (0,0) {$\varnothing$};
 \node (n2) at (0,-1) {$\varnothing$};
    \node at (-1,-0.5) {\textcolor{brown}{\textbf{Res}}};
  
 \node at (0.3,-0.8) {1};
    \node (n3) at (0,-2) {$\varnothing$};
   \node at (0.3,-1.8) {\textcolor{blue}{\textbf{$1$}}};
    \node (n4) at (1.5,-2) {$\yng(1)$};
  \node at (1.8,-1.8) {\textcolor{blue}{\textbf{$1$}}};
   \node at (-1,-1.5) {\textcolor{purple}{\textbf{Ind}}};
    \node at (-2,-1.5) {$k=1:$};
    \node (n5) at (0, -3) {$\varnothing$};
\node at (0.2,-2.6) {2};
    \node (n6) at (2,-3) {$\yng(1)$};
\node at (2.3,-2.8) {1};
    \node at (-1,-2.5) {\textcolor{brown}{\textbf{Res}}};
  \node (n7) at (0,-4) {$\varnothing$};
  \node at (0.3,-3.8) {\textcolor{blue}{\textbf{$2$}}};
    \node (n8) at (1,-4) {$\yng(1)$};
   \node at (1.3,-3.8) {\textcolor{blue}{\textbf{$3$}}};
    \node (n9) at (2.5,-4) {$\yng(2)$};
\node at (3,-3.8) {\textcolor{blue}{\textbf{$1$}}};
\node (n10) at (4,-4) {$\yng(1,1)$};
\node at (4.3,-3.8) {\textcolor{blue}{\textbf{$1$}}};
\node at (-1,-3.5) {\textcolor{purple}{\textbf{Ind}}};
\node at (-2,-3.5) {$k=2:$};
\node (n11) at (0,-5.5) {$\varnothing$};
\node (n12) at (1,-5.5) {$\yng(1)$};
\node (n13) at (2.5,-5.5) {$\yng(2)$};
\node (n14) at (4,-5.5) {$\yng(1,1)$};
\node at (-1,-4.5) {\textcolor{brown}{\textbf{Res}}};
\node at (0.3,-5.4) {$5$};
\node at (1.6,-5.4) {$5$};
\node at (3.2,-5.4) {$1$};
\node at (4.4,-5.4) {$1$};

 \node (n15) at (0,-7) {$\varnothing$};
  \node (n16) at (1,-7) {$\yng(1)$};
   \node at (1.3,-3.8) {\textcolor{blue}{\textbf{$3$}}};
    \node (n17) at (2.5,-7) {$\yng(2)$};
\node at (3,-3.8) {\textcolor{blue}{\textbf{$1$}}};
\node (n18) at (4,-7) {$\yng(1,1)$};
\node (n19) at (5.3,-7) {$\yng(3)$};
\node (n20) at (7,-7) {$\yng(2,1)$};
 \node (n21) at (9,-7) {$\yng(1,1,1)$};

 \node at (0.3,-6.9) {\textcolor{blue}{\textbf{$5$}}};
\node at (1.5,-6.9) {\textcolor{blue}{\textbf{$10$}}};
\node at (3.1,-6.9) {\textcolor{blue}{\textbf{$6$}}};
\node at (4.4,-6.9) {\textcolor{blue}{\textbf{$6$}}};
\node at (6.1,-6.9) {\textcolor{blue}{\textbf{$1$}}};
\node at (7.6,-6.9) {\textcolor{blue}{\textbf{$2$}}};
\node at (9.4,-6.9) {\textcolor{blue}{\textbf{$1$}}};

  \node at (-1,-8) {$\vdots$};
     \node at (0,-8) {$\vdots$};
    \node at (1,-8) {$\vdots$};
\node at (2.5,-8) {$\vdots$};
\node at (4,-8) {$\vdots$};
\node at (5.5,-8) {$\vdots$};
\node at (6.8,-8) {$\vdots$};
\node at (9,-8) {$\vdots$};
\node at (11,-8) {$\vdots$};
\node at (14,-8) {$\vdots$};

\node at (-1,-6.5) {\textcolor{purple}{\textbf{Ind}}};
\node at (-2,-6.5) {$k=3:$};

\draw[brown, thick] (n7)--(n11);
 \draw[brown, thick] (n8)--(n11);
 \draw[brown, thick] (n8)--(n12);
 \draw[brown, thick] (n9)--(n12);
 \draw[brown, thick] (n10)--(n12);
 \draw[brown, thick] (n9)--(n13);
 \draw[brown, thick] (n10)--(n14);

 \draw[purple, thick] (n11)--(n15);
 \draw[purple, thick] (n11)--(n16);
 \draw[purple, thick] (n12)--(n16);
  \draw[purple, thick] (n12)--(n17);
   \draw[purple, thick] (n12)--(n18);
 \draw[purple, thick] (n13)--(n17);
 \draw[purple, thick] (n13)--(n19);
 \draw[purple, thick] (n13)--(n20);
 \draw[purple, thick] (n14)--(n18);
 \draw[purple, thick] (n14)--(n20);
 \draw[purple, thick] (n14)--(n21);

  \draw[brown,thick] (n1) -- (n2);    
  \draw[purple,thick] (n2) -- (n3);
  \draw[purple,thick] (n2) -- (n4);
  \draw[brown,thick] (n3) -- (n5);
  \draw[brown,thick] (n4) -- (n6);
  \draw[brown, thick](n4)--(n5);
  \draw[purple,thick] (n5) -- (n7);
  \draw[purple,thick] (n5) -- (n8);
  \draw[purple,thick] (n6) -- (n8);
   \draw[purple,thick] (n6) -- (n9);
    \draw[purple,thick] (n6) -- (n10);
 \node at (11,0) {Sum of squares};
    \node at (11,-2) {\textcolor{blue}{\textbf{2}}};
     \node at (11,-4.3) {\textcolor{blue}{\textbf{15}}};
     \node at (11,-7) {\textcolor{blue}{\textbf{203}}};
 \node at (14,0) {Bell numbers};
    \node at (14,-2) {\textcolor{blue}{\textbf{B(2)}}};
    \node at (14,-4.3) {\textcolor{blue}{\textbf{B(4)}}};
    \node at (14,-7) {\textcolor{blue}{\textbf{B(6)}}};
\end{tikzpicture}
\label{fig1}
\caption{Induction-restriction diagram 
for the branching $R_{n-1}\subset R_n$, up to level $k= 3$,
the number at the north-east corner of a box is the multiplicity}
\end{figure}

To this end, our first goal is to understand the $R_n$-module
\begin{displaymath}
W_{k,n}:= (\Ind^{R_n}_{R_{n-1}}\circ\Res^{R_n}_{R_{n-1}})^k(\mathbb{C}_{\triv}).
\end{displaymath}
In Theorem~\ref{thm:1}, we give a decomposition of this space 
into tensor powers of the natural $R_n$-module $V_n$ with multiplicities 
invloving various Bell numbers. The dimension of this space itself is 
given by a nice combinatorial entity, namely the generalized Bell number. 
While generalized Bell numbers have been considered in the context of 
number theory and combinatorics, see \cite{1,2,3,4}, our Proposition~\ref{prop:2} 
shows their natural occurrence in representation theory as well. It is interesting 
to note that the module $W_{k,n}$ also helps to give a representation theoretical
interpretation, in  Subsection~\ref{s-comb3}, of a
certain combinatorial identity in~\cite{17} and we also find a new 
identity in Lemma~\ref{lem-s-comb2.1}.

In Section~\ref{sec:cent}, we define an action of the partition algebra 
$\mathcal{P}_k(\delta)$, for any value of the parameter $\delta$, on 
the space $W_{k,n}$, such that this action commutes with the action of 
$R_n$. Further, we prove in Theorem~\ref{thm:main2} that, 
for any parameter $\delta$ such that $\mathcal{P}_k(\delta)$ is semi-simple 
and $n\geq k$, we have an algebra isomorphism 
$$\mathcal{P}_k(\delta)\cong \End_{R_n}(W_{k,n}). $$

To prove the above result we use properties of characters for the 
partition algebra as given by~\cite{6} and Theorem~\ref{thm:cr} 
where we prove that character values for the partition algebras can be 
described in terms of character values for various algebras which 
are isomorphic to the monoid algebras of dual symmetric monoids.  

The paper is organized as follows. In Section~\ref{sec:parti}, we 
collected all necessary preliminaries on partition algebras. 
In Section~\ref{sec:inrook}, we recall the representation theory of 
the rook monoid and the dual symmetric inverse monoid.
In Section~\ref{sec:inrook}, we give a decomposition of $W_{k,n}$ 
and in Section~\ref{sec:cent} we prove that any semi-simple 
partition algebra $\mathcal{P}_k(\delta)$ is the  centralizer of $R_n$
action on $W_{k,n}$, provided that $n\geq k$. The last 
Section~\ref{s-comb} contains various connections 
between the representation $W_{k,n}$ and certain combinatorial 
results from \cite{17}.

\section{Preliminaries}\label{sec:parti}

In this paper, we work over the field $\mathbb{C}$ of complex numbers.

%

\subsection{Partition algebras}\label{sec:parti.1}

Let $X$ be a non-empty set. Recall that a {\em set-partition} of $X$ is a collection 
$\mathcal{S} = \{Y_1, Y_2,\ldots, Y_l\}$ of disjoint non-empty subsets of $X$ 
such that $\displaystyle\bigcup_{i=1}^l Y_i = X$. Each individual
$Y_i$ is called  a {\em part} of $\mathcal{S}$.

For $[k]:=\{1,2,\ldots, k\}$ and $[l']:=\{1',2',\ldots,l'\}$, we are interested 
in set-partitions of the set $[k]\cup [l']$. To start with, let us introduce the following
notation: for a part
\begin{displaymath}
\mathcal{T}= \{a_1,a_2, \ldots, a_{p}\}\cup \{b_1',b_2',\ldots,b_q'\}\subset 
[k]\cup [l'], 
\end{displaymath}
we set
$\mathcal{T}':=\{a_1',a_2',\ldots,a_{p}'\}\cup\{b_1,b_2,\ldots,b_q\}$.

Now we can define the notion of a {\em partition diagram}. For a 
set-partition of $[k]\cup [l']$, the corresponding partition diagram 
is an equivalence class of simple graphs. Each of these graphs has 
$[k]\cup [l']$ as a vertex set. We depict the vertices 
of this graph in two rows as follows: the upper (top) row contains 
$k$ vertices labeled $1, 2,\ldots,k$ from left to right. The lower
(bottom) row contains $l$ vertices labeled $1', 2', \ldots, l'$ 
from left to right. The condition on the edges is that two 
vertices belong to the same connected component if and only
if they belong to the same part of the partition. In particular,
two such graphs represent the same partition diagram if and only if 
they have the same connected components. From now on, we will identify
set-partitions with the corresponding partition diagrams.
We denote the set of partition diagrams on $[k]\cup [k']$ by $\mathcal{A}_k$. 
Abusing terminology, we will call any simple graph 
with vertex set $[k]\cup [l']$ a partition diagram, meaning that we, in fact,
think of the whole partition diagram that this graph represents.

A part of a partition diagram on $[k]\cup [l']$  is {\em propagating} 
provided that it intersects both $[k]$ and $[l']$ non-trivially.
The number of propagating parts of a partition diagram $d$
is called the {\em rank} of $d$ and denoted $\mathrm{rank}(d)$.

\begin{example}\label{ex1}
For $k=7$, $l=4$, the set-partition $\{\{1,3',4'\}, \{2,3,4,1'\},\{2'\}, \{5,6,7\}\}$
can be represented by the following diagram:
\begin{center}
\begin{tikzpicture}[scale=1,mycirc/.style={circle,fill=black, minimum size=0.1mm, inner sep = 1.1pt}]
\node at (-1.5,0.5) {$d = $};
\node[mycirc,label=above:{$1$}] (n1) at (0,1) {};
\node[mycirc,label=above:{$2$}] (n2) at (1,1) {};
\node[mycirc,label=above:{$3$}] (n3) at (2,1) {};
\node[mycirc,label=above:{$4$}] (n4) at (3,1) {};
\node[mycirc,label=above:{$5$}] (n5) at (4,1) {};
\node[mycirc,label=above:{$6$}] (n6) at (5,1) {};
\node[mycirc,label=above:{$7$}] (n7) at (6,1) {};
\node[mycirc,label=below:{$1'$}] (n1') at (0,0) {};
\node[mycirc,label=below:{$2'$}] (n2') at (1,0) {};
\node[mycirc,label=below:{$3'$}] (n3') at (2,0) {};
\node[mycirc,label=below:{$4'$}] (n4') at (3,0) {};
\path[-, draw](n1) to (n3');
\path[-, draw](n3') to (n4');
\path[-,draw](n2) to (n3) to (n4); 
\path[-,draw](n2) to (n1'); 
\path[-,draw] (n5) to (n6) to (n7);              
\end{tikzpicture}
\end{center}
\end{example}

Let $d$ be a partition diagram of $[k]\cup [l']$. Denote by $\text{top}(d)$
the set-partition of $[k]$ obtained by intersecting the parts of
$d$ with $[k]$ (and removing all empty intersections).
We call $\text{top}(d)$ the {\em top set-partition} of $d$.
For example, if we take $d$ from Example~\ref{ex1}, 
then $\text{top}(d)= \{\{1\}, \{2,3,4\}, \{5,6,7\}\}$. 
Similarly, we denoted by $\text{bottom}(d)$
the set-partition of $[l']$ obtained by intersecting the parts of
$d$ with $[l']$ (and removing all empty intersections).
We call $\text{bottom}(d)$ the {\em bottom set-partition} of $d$.
For example, if we take $d$ from Example~\ref{ex1}, 
then  $\text{bottom}(d)= \{\{1'\}, \{2'\}, \{3',4'\}\}$.

For $k\geq 1$, as a vector space, the {\em partition algebra}
$\mathcal{P}_k(\delta)$  is defined as the linear span of 
all elements in  $\mathcal{A}_k$.

Next let us describe how partition diagrams are multiplied.
The first step in multiplication is vertical concatenation. Given
$d_1, d_2\in\mathcal{A}_k$, their {\em vertical concatenation},
denoted $d_1\circ d_2$, is obtained in the following way:
\begin{itemize}
\item first we place $d_1$ on top of $d_2$;
\item next we identify each vertex in the bottom row of $d_1$
with the corresponding vertex in the top row of $d_2$;
\item the diagram $d_1\circ d_2$ inherits the top row from $d_1$
and the bottom row from $d_2$, and the connectivity of vertices
is determined by the connected components of the previous step.
\end{itemize}

An example of two partition diagrams $d_1$ and $d_2$
from $\mathcal{A}_7$ and their vertical concatenation $d_1\circ d_2$
can be found in Figure~\ref{fig2}. Note that, after the second step
of the above procedure, we will have a graph in which we potentially
have connected components entirely contained in the bottom row of $d_1$
(which is now identified with the top row of $d_2$). Let $c$
be the number of such connected components. In the example given
by Figure~\ref{fig2}, we have two such components highlighted in 
{\color{red}red}.

Now we can define the product of two partition diagrams.
For $d_1,d_2\in\mathcal{A}_k$, the product $d_1d_2$ in
$\mathcal{P}_k(\delta)$ is defined as $\delta^c(d_1\circ d_2)$.
So, in the example given by Figure~\ref{fig2}, we have
$d_1d_2=\delta^2(d_1\circ d_2)$.

\begin{figure}
  \begin{tikzpicture}[scale=1,mycirc/.style={circle,fill=black, minimum size=0.1mm, inner sep = 1.1pt}]
\node at (-1.5,0.5) {$d_1 = $};
\node[mycirc,label=above:{$1$}] (n1) at (0,1) {};
\node[mycirc,label=above:{$2$}] (n2) at (1,1) {};
\node[mycirc,label=above:{$3$}] (n3) at (2,1) {};
\node[mycirc,label=above:{$4$}] (n4) at (3,1) {};
\node[mycirc,label=above:{$5$}] (n5) at (4,1) {};
\node[mycirc,label=above:{$6$}] (n6) at (5,1) {};
\node[mycirc,label=above:{$7$}] (n7) at (6,1) {};

\node[mycirc,label=below:{$1'$}] (n1') at (0,0) {};
\node[mycirc,label=below:{$2'$}] (n2') at (1,0) {};
\node[mycirc,label=below:{$3'$}] (n3') at (2,0) {};
\node[mycirc,label=below:{$4'$}] (n4') at (3,0) {};
\node[mycirc,label=below:{$5'$}] (n5') at (4,0) {};
\node[mycirc,label=below:{$6'$}] (n6') at (5,0) {};
\node[mycirc,label=below:{$7'$}] (n7') at (6,0) {};

\path[-, draw](n1) to (n2);
\path[-, draw](n2) to (n3);
\path[-,draw](n1') to (n2'); 
\path[-,draw](n2') to (n3'); 
\path[-,draw] (n1) to (n1');
\path[-,draw] (n3) to (n3');

\path[-, draw](n4) to (n5);
\path[-,red,draw](n4') to (n5');  
\path[-,draw](n7) to (n7');

\node at (-1.5,-2.5) {$d_2 = $};
\node[mycirc,label=above:{$1$}] (m1) at (0,-2) {};
\node[mycirc,label=above:{$2$}] (m2) at (1,-2) {};
\node[mycirc,label=above:{$3$}] (m3) at (2,-2) {};
\node[mycirc,label=above:{$4$}] (m4) at (3,-2) {};
\node[mycirc,label=above:{$5$}] (m5) at (4,-2) {};
\node[mycirc,label=above:{$6$}] (m6) at (5,-2) {};
\node[mycirc,label=above:{$7$}] (m7) at (6,-2) {};
\path[dotted,draw, thick] (n1')..controls(-0.6,-1)..(m1);
\path[dotted,draw,  thick] (n2')..controls(0.4,-1)..(m2);
\path[dotted,draw, thick] (n3')..controls(1.4,-1)..(m3);
\path[dotted,draw, red,thick] (n4')..controls(2.4,-1)..(m4);
\path[dotted,draw, red,thick] (n5')..controls(3.4,-1)..(m5);
\path[dotted,draw, red, thick] (n6')..controls(4.4,-1)..(m6);
\path[dotted,draw, thick] (n7')..controls(5.4,-1)..(m7);

\node[mycirc,label=below:{$1'$}] (m1') at (0,-3) {};
\node[mycirc,label=below:{$2'$}] (m2') at (1,-3) {};
\node[mycirc,label=below:{$3'$}] (m3') at (2,-3) {};
\node[mycirc,label=below:{$4'$}] (m4') at (3,-3) {};
\node[mycirc,label=below:{$5'$}] (m5') at (4,-3) {};
\node[mycirc,label=below:{$6'$}] (m6') at (5,-3) {};
\node[mycirc,label=below:{$7'$}] (m7') at (6,-3) {};

\path[-,draw] (m6') to (m7');
\path[-,draw](m1) to (m2);
\path[-,draw](m1) to (m1'); 
\path[-,draw](m2') to (m3'); 
\path[-,draw] (m3) to (m3');
\path[-,red,draw] (m4) to (m5);
\path[-,draw](m7) to (m7');                
\end{tikzpicture}

\vspace{10mm}

\begin{tikzpicture}[scale=1,mycirc/.style={circle,fill=black, minimum size=0.1mm, inner sep = 1.1pt}]
\node at (-1.5,0.5) {$d_1\circ d_2 = $};
\node[mycirc,label=above:{$1$}] (n1) at (0,1) {};
\node[mycirc,label=above:{$2$}] (n2) at (1,1) {};
\node[mycirc,label=above:{$3$}] (n3) at (2,1) {};
\node[mycirc,label=above:{$4$}] (n4) at (3,1) {};
\node[mycirc,label=above:{$5$}] (n5) at (4,1) {};
\node[mycirc,label=above:{$6$}] (n6) at (5,1) {};
\node[mycirc,label=above:{$7$}] (n7) at (6,1) {};

\node[mycirc,label=below:{$1'$}] (n1') at (0,0) {};
\node[mycirc,label=below:{$2'$}] (n2') at (1,0) {};
\node[mycirc,label=below:{$3'$}] (n3') at (2,0) {};
\node[mycirc,label=below:{$4'$}] (n4') at (3,0) {};
\node[mycirc,label=below:{$5'$}] (n5') at (4,0) {};
\node[mycirc,label=below:{$6'$}] (n6') at (5,0) {};
\node[mycirc,label=below:{$7'$}] (n7') at (6,0) {};
\path[-,draw] (n1) to (n2) to (n3);
\path[-, draw] (n1') to (n2') to (n3');
\path[-, draw] (n1) to (n1');
\path[-, draw] (n4) to (n5);
\path[-,draw] (n6') to (n7');
\path[-,draw] (n7) to (n7');
\end{tikzpicture}
\label{fig2}
\caption{Example of vertical concatenation of partition diagrams}
\end{figure}
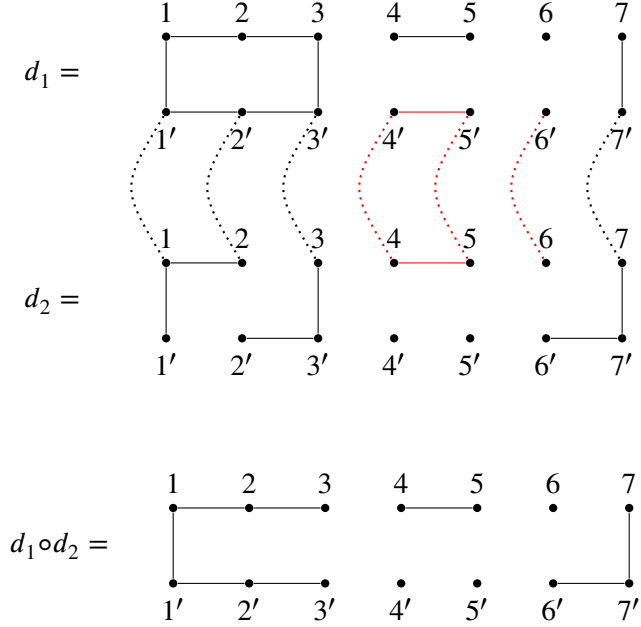   

Partition diagrams can also be concatenated horizontally
(which represents the fact that all partition diagrams 
of all possible $[k]\cup[l']$ can be organized into a $2$-category).
Given $d_1\in\mathcal{A}_{k_1}$ and $d_2\in\mathcal{A}_{k_2}$, their
{\em horizontal concatenation} $d_1\otimes d_2\in\mathcal{A}_{k_1+k_2}$
is obtained by drawing $d_2$ next to $d_1$ on the right and 
renumbering the vertices of $d_2$ appropriately. 

For instance, the horizontal concatenation of the diagrams 
\begin{center}
    \begin{tikzpicture}[scale=1,mycirc/.style={circle,fill=black, minimum size=0.1mm, inner sep = 1.1pt}]
\node at (-1,0.5) {$\gamma_1 = $};
\node[mycirc,label=above:{$1$}] (n1) at (0,1) {};
   \node[mycirc,label=above:{$2$}] (n2) at (1,1) {};

   \node[mycirc,label=below:{$1'$}] (n1') at (0,0) {};
				\node[mycirc,label=below:{$2'$}] (n2') at (1,0) {};
				
          \path[-, draw](n1) to (n2');
				\path[-,draw](n1') to (n2);  
               
	\end{tikzpicture}
  \begin{tikzpicture}[scale=1,mycirc/.style={circle,fill=black, minimum size=0.1mm, inner sep = 1.1pt}]
\node at (-1,0.5) {$\gamma_2 = $};
\node[mycirc,label=above:{$1$}] (n1) at (0,1) {};
\node[mycirc,label=above:{$2$}] (n2) at (1,1) {};
\node[mycirc,label=above:{$2$}] (n3) at (2,1) {};

\node[mycirc,label=below:{$1'$}] (n1') at (0,0) {};
\node[mycirc,label=below:{$2'$}] (n2') at (1,0) {};
\node[mycirc,label=below:{$3'$}] (n3') at (2,0) {};
				
\path[-, draw](n1) to (n2');
\path[-,draw](n2) to (n3');  
 \path[-,draw](n3) to (n1');                
	\end{tikzpicture}
 \end{center} 
 equals
 \begin{center}
  \begin{tikzpicture}[scale=1,mycirc/.style={circle,fill=black, minimum size=0.1mm, inner sep = 1.1pt}]
\node at (-1,0.5) {$\gamma_1\otimes \gamma_2 = $};
\node[mycirc,label=above:{$1$}] (n1) at (0,1) {};
\node[mycirc,label=above:{$2$}] (n2) at (1,1) {};
\node[mycirc,label=above:{$3$}] (n3) at (2,1) {};
\node[mycirc,label=above:{$4$}] (n4) at (3,1) {};
\node[mycirc,label=above:{$5$}] (n5) at (4,1) {};

\node[mycirc,label=below:{$1'$}] (n1') at (0,0) {};
\node[mycirc,label=below:{$2'$}] (n2') at (1,0) {};
\node[mycirc,label=below:{$3'$}] (n3') at (2,0) {};
\node[mycirc,label=below:{$4'$}] (n4') at (3,0) {};
\node[mycirc,label=below:{$5'$}] (n5') at (4,0) {};

\path[-, draw](n1) to (n2');
\path[-,draw](n1') to (n2);  
\path[-, draw](n3) to (n4');
\path[-,draw](n4) to (n5');  
 \path[-,draw](n5) to (n3');                
	\end{tikzpicture}
 \end{center} 
 
The way we draw and multiply diagrams suggests that we should 
naturally consider top and bottom modules. Our previous 
notation reflects the standard  convention 
that the top structure corresponds to left modules while the
bottom structure to right modules.
 
\subsection{Symmetric groups} 

In this article, symmetric groups will appear in various context, so here we
fix some notation and recall some standard facts, see e.g. \cite{Sa}.

For a positive integer $n$, let $S_n$ denote the {\em symmetric group} on $[n]$. 
The irreducible representations of $S_n$ are indexed by partitions of $n$. 
For a partition $\lambda$ of $n$ (denoted $\lambda\vdash n$), let 
$\text{SYT}(\lambda)$ denote the set of {\em  standard Young tableaux} 
of shape $\lambda$. We also denote by $S^\lambda$ the {\em Specht module} 
corresponding to $\lambda$. The module $S^\lambda$ has a basis
$\{v_T\mid T\in\text{SYT}(\lambda)\}$ indexed by $\text{SYT}(\lambda)$
and consisting of so-called {\em polytabloids}. The dimension of 
$S^\lambda$, that is, the cardinality of $\text{SYT}(\lambda)$,
is denoted $f^\lambda$.

\subsection{Schur--Weyl duality between symmetric group and partition algebra}\label{sec:phi} 

For $n\geq 1$, consider the defining representation $\mathbb{C}^n$ of $S_n$ 
and let $e_1, e_2,\ldots, e_n$ be the standard basis of $\mathbb{C}^n$. 
For $k\geq 1$, the partition algebra $\mathcal{P}_k(n)$ acts on (the right of) 
$(\mathbb{C}^n)^{\otimes k}$ as follows.

For $d\in\mathcal{A}_k$ and a basis vector $e_{i_1}\otimes e_{i_2}\otimes\cdots\otimes e_{i_k}\in(\mathbb{C}^n)^{\otimes k}$,
$$\phi_k(d)(e_{i_1}\otimes e_{i_2}\otimes\cdots\otimes e_{i_k}):= 
\sum e_{j_1}\otimes e_{j_2}\otimes \cdots\otimes e_{j_k}, $$
where the sum is taken over all $j_1,j_2,\dots, j_k$ which satisfy 
the following conditions:
\begin{itemize}
\item if some $p$ and $q$ appear in the same part of $d$,
then $i_p=i_q$;
\item if some $p'$ and $q'$ appear in the same part of $d$,
then $j_p=j_q$;
\item if some $p$ and $q'$ appear in the same part of $d$,
then $i_p=j_q$
\end{itemize}
(note that the first of these conditions does not depend on 
$j_1,j_2,\dots, j_k$ at all). Furthermore, for all $\sigma\in S_n$, we have $
(\sigma \circ \phi_{k})(d) = (\phi_k\circ \sigma)(d)$, so 
$\phi_k(d)\in\text{End}_{S_n}((\mathbb{C}^n)^{\otimes k})$.
The following result is due to Jones and Martin, see \cite{Jones,Martin}.

\begin{theorem}
For $k\geq 1$, the map 
$\phi_k:\mathcal{P}_k(n)\to \textrm{End}_{S_n}((\mathbb{C}^n)^{\otimes k})$ 
is surjective. Moreover, for $n\geq 2k$, the map $\phi_k$ is injective. 
\end{theorem}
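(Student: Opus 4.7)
The plan is to compare both algebras via a natural indexing by set-partitions of $[k]\cup[k']$, and to use M\"obius inversion on the partition lattice to pass between the two descriptions.

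First, a routine check confirms that $\phi_k$ is an algebra homomorphism. Given $d_1,d_2\in\mathcal{A}_k$, applying $\phi_k(d_1)\phi_k(d_2)$ to a basis tensor $e_{i_1}\otimes\cdots\otimes e_{i_k}$ produces a sum over compatible bottom indices in which each connected component of $d_1\circ d_2$ that is entirely absorbed into the middle row contributes one free parameter in $[n]$. Summing over those parameters yields a factor $n^c$, which matches the multiplication rule $d_1d_2=n^c(d_1\circ d_2)$ in $\mathcal{P}_k(n)$.

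Next, I would analyze $\End_{S_n}((\mathbb{C}^n)^{\otimes k})$ by noting that $S_n$ acts diagonally on the index set $[n]^k\times[n]^k$ parameterizing matrix coefficients of endomorphisms. An endomorphism is $S_n$-invariant iff its matrix is constant on orbits, and the orbits are classified by the coincidence set-partition of a tuple in $[n]^{2k}$, namely the partition of $[k]\cup[k']$ in which two positions are equivalent iff they carry equal indices. Such an orbit is non-empty precisely when its partition has at most $n$ parts, so the indicator matrices $\{O_P : P\text{ has at most }n\text{ parts}\}$ form a basis of $\End_{S_n}((\mathbb{C}^n)^{\otimes k})$.

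Unpacking the definition of $\phi_k(d)$, one sees that
\[
\phi_k(d) \;=\; \sum_{P\,\geq\, d} O_P,
\]
where $P\geq d$ is the refinement order (so $P$ is coarser than $d$), and $O_P=0$ whenever $P$ has more than $n$ parts; the bullet-point conditions in the definition of $\phi_k(d)$ say exactly that the coincidence partition of the resulting tuple is coarser than $d$. This exhibits $\phi_k$ as a unitriangular transformation in the partition order. By M\"obius inversion on the partition lattice, each $O_P$ (for $P$ with at most $n$ parts) is a $\mathbb{Z}$-linear combination of $\phi_k(d)$'s, which yields surjectivity. When $n\geq 2k$, every set-partition of the $2k$-element set $[k]\cup[k']$ has at most $2k\leq n$ parts, so $\phi_k$ becomes a triangular change of basis between two spaces of the same dimension and is therefore an isomorphism, giving injectivity. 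The main technical obstacle is the combinatorial bookkeeping that establishes the displayed triangular identity; once that is in hand, the M\"obius inversion step is standard.
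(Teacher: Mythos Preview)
Your argument is correct. Note, however, that the paper does not actually prove this theorem: it is stated as a known result and attributed to Jones and Martin with a citation, so there is no ``paper's own proof'' to compare against. What you have written is the standard proof one finds in the literature (for instance in Halverson--Ram \cite{13}): identify a basis of $\End_{S_n}((\mathbb{C}^n)^{\otimes k})$ by the orbit indicators $O_P$ indexed by set-partitions of $[k]\cup[k']$ with at most $n$ parts, observe the unitriangular relation $\phi_k(d)=\sum_{P\geq d}O_P$, and invert via the M\"obius function of the partition lattice. One small point worth making explicit in your write-up: when you invert to get $O_P=\sum_{d\geq P}\mu(P,d)\,\phi_k(d)$, the sum is over $d$ \emph{coarser} than $P$, and such $d$ automatically have no more parts than $P$ does, so the inversion stays within the relevant range even when $n<2k$; this is what makes surjectivity go through without any restriction on $n$.
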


\subsection{Irreducible representations and characters of partition algebras}

In this section, we recall the basic representation theory of  $\mathcal{P}_k(\delta)$,
see e.g \cite{13}. 

Let $V(k,i)$ denote the set of all partition diagrams in $\mathcal{A}_k$
that have rank $i$ for which all elements of $[k']$
belong to different parts and, moreover, the elements $1', 2', \ldots, i'$
belong to propagating parts. Then the linear span 
$\mathbb{C}[V(k,i)]$ becomes a left module over $\mathcal{P}_k(\delta)$ 
as follows, for $d_1\in V(k,i)$ and $d\in \mathcal{A}_k$ we have
\begin{displaymath}
d \cdot d_1:= \begin{cases}
    dd_1, & \text{ if } d\circ d_1\in V(k,i);\\
    0,  & \text{ otherwise. }
\end{cases} 
\end{displaymath}
The symmetric group $S_i$ acts on the right of $\mathbb{C}[V(k,i)]$
naturally, permuting the elements $1',2',\dots,i'$. This action obviously
commutes with the left action of $\mathcal{P}_k(\delta)$.
For $\lambda\vdash i$, the corresponding {\em standard module}
$\mathcal{P}_k^\lambda$ is defined as
\begin{displaymath}
\mathcal{P}_k^\lambda:= \mathbb{C}[V(k,i)]\otimes_{S_i} S^\lambda.
\end{displaymath}

For $\delta\in \mathbb{C}$ such that  $\mathcal{P}_k(\delta)$ is semi-simple,
the set $\{\mathcal{P}_k^\lambda\mid 0\leq i\leq k,\, \lambda\vdash i\}$ is 
a complete and irredundant set of simple $\mathcal{P}_k(\delta)$-modules.

\subsection{Set-partition tableaux}  

Let $A_1$ and $A_2$ be two non-empty disjoint subsets of the set
$\mathbb{Z}_{>0}$. We write $A_1\prec A_2$ provided that $\min A_1< \min A_2.$ 
This is usually called the {\em minimum entry order}. 

Let $\mu$ be some Young diagram with $k$ boxes and $A_1,A_2,\dots,A_k$
be a collection of non-empty disjoint subsets of $\mathbb{Z}_{>0}$.
A {\em set-partition tableau} of shape $\mu$ is a bijective 
assignment of some $A_i$ to each box of $\mu$. The union of all
the $A_i$'s is called the {\em content} of our set-partition tableau.
A set-partition 
tableau is called {\em standard} if its entries increase 
left-to-right and top-to-bottom with respect to the minimum entry order.
For $0\leq|\mu|\leq k$, let $\text{Set-SYT}_k(\mu)$ denote the set of all 
standard set-partition tableaux of shape $\lambda$ 
and content $[k]$.

Let $\mathcal{N}(k,i)$ be the set of all those elements of $V(k,i)$ whose 
top blocks, say $C_1, C_2,\ldots, C_i$, connected to $1',2',\ldots, i'$,
respectively, are ordered with respect to the minimum entry order, i.e.,
$$ C_1\prec C_2\prec\cdots\prec C_i.$$
The set $\mathcal{N}(k,i)$ is a cross-section of the orbits of the 
natural action of 
$S_i$ on $V(k,i)$. Consequently, the set 
$$\{d\otimes v_T\mid d\in\mathcal{N}(k,i), T\in\text{SYT}(\lambda)\}$$
is a basis of $\mathcal{P}_k^\lambda$.

For $T\in\text{SYT}(\lambda)$, we can assign $C_1$ to the block of $T$
containing $1$, then we assign $C_2$ to the block of $T$
containing $2$, and so on. This produces a standard set-partition 
tableau $P_T$ of shape $\lambda$. For the same $d\in\mathcal{N}(k,i)$, 
let $Q$ be the standard set-partition tableau with one row  
whose filling is given by the non-propagating top parts of $d$.  Then 
the set
$$\{(P_T,Q)\mid T\in\text{SYT}(\lambda), P_T\leftrightarrow d\in\mathcal{N}(k,i)\} $$
naturally indexes a basis of $\mathcal{P}_k^\lambda$.

\subsection{Characters of partition algebras}\label{sec:char}

Characters of partition algebras are studied in \cite{6} in detail.
The definition is taken from representation theory of finite groups:
given a finite dimensional $\mathcal{P}_k(\delta)$ module $V$, its 
{\em character} $\chi_{V}$ assigns to $u\in \mathcal{P}_k(\delta)$
the trace of the linear operator with which $u$ acts on $V$.

For the cycle $(1, 2, \ldots, i)$ in $S_i$, 
let $\gamma_i$ denote the corresponding partition diagram 
in $\mathcal{A}_i$ and $E_{1} = \{\{1\}, \{1'\}\}\in\mathcal{A}_1$.
For a composition $\mu = (\mu_1, \mu_2, \ldots,\mu_l)$ of $m \leq k$, 
consider the following partition diagram:
$$d_\mu = \gamma_{\mu_1}\otimes \gamma_{\mu_2}\otimes\cdots
\otimes \gamma_{\mu_l}\otimes E_1^{\otimes (k-m)}.$$

These elements $d_\mu$ completely determine the character of
any $\mathcal{P}_k(\delta)$-module in the following sense. Given $d\in \mathcal{A}_k$,
the rank of 
\begin{displaymath}
d(i):=\underbrace{d\circ d\circ \dots\circ d}_{i\text{ factors}} 
\end{displaymath}
weakly decreases with $i$ and hence eventually
stabilizes for $i\gg 0$. Let $m$ be the corresponding minimal
value of this rank. There is an $i$ such that the rank of
$d(i)$ equals $m$ and for which one can find two partition
diagrams $d_1$ and $d_2$ of rank $m$ such that 
$d_1\circ d_2= d_\mu$, for some $\mu$ as above, and
$d_2\circ d_1= d(i)$. This implies that
\begin{equation}\label{eq:inde}
\chi_{V}(d) = \delta^s \chi_{V}(d_\mu),
\end{equation}
for any $\mathcal{P}_k(\delta)$-module $V$,
where $s$ is a nonnegative integer which depends only on $d$
but not on $V$.

We refer to \cite[Section~3]{6} for details
and also to \cite{KM09b} which explains similar
phenomena from the point of view of representations of semigroups.

\section{Dual symmetric inverse monoid and rook monoid}\label{sec:inrook}

In this section we recall basic representation theory of two monoids,
the dual symmetric inverse monoid and the rook monoid. Both these
monoids are inverse monoids. The complex representation theory of 
an inverse monoid is semi-simple and is built from the representation
theory of its maximal subgroups. The latter are in bijection with
idempotents, however, for representation theory, the essential 
information is provided by equivalence classes of such idempotents
where the equivalence is given by Green's $\mathcal{J}$-relation.
We use \cite{8} as a general reference for representation theory of 
monoids, in particular, inverse monoids.

\subsection{Dual symmetric inverse monoid}\label{sec:dual}

Denote by $\mathcal{I}_k^*$ the set of all partition diagrams in 
$\mathcal{A}_k$ for which each part is propagating.
It turns out that the product of two elements in 
$\mathcal{I}_k^*$ coincides with vertical concatenation
(so, no additional $\delta$'s appear). In particular,
$\mathcal{I}_k^*$ is a multiplicative submonoid of 
$\mathcal{P}_k(\delta)$, for any $\delta\in\mathbb{C}$.
The monoid algebra $\mathbb{C}[\mathcal{I}_k^*]$ is therefore
a subalgebra of the partition algebra $\mathcal{P}_k(\delta)$,
for any $\delta\in\mathbb{C}$. As $\mathcal{I}_k^*$ is an inverse 
monoid, the monoid algebra $\mathbb{C}[\mathcal{I}_k^*]$ is semi-simple. 

\subsection{Representation theory of $\mathcal{I}_k^*$}\label{dualreps} 

The irreducible representations of $\mathbb{C}[\mathcal{I}_k^*]$ 
are indexed by partitions $\lambda$ of $i$, where 
$1\leq i\leq k$. For $1\leq i\leq k$, 
let $\epsilon_i$ denote the partition diagram 
whose parts are $\{1,1'\}$, $\{2,2'\}, \ldots, \{i-1,(i-1)'\}$ and
\begin{equation}\label{eq:sa}
\{i, i+1, \ldots, k, i', (i+1)',\ldots, k'\}.
\end{equation}
Let $G_i$ denote the maximal subgroup corresponding to $\epsilon_i$. 
Then $G_i$ consists of all partitions diagrams in $\mathcal{I}_k^*$ 
of rank $i$ with the corresponding top set partitions
given by
\begin{displaymath}
\{\{1\},\{2\},\dots,\{i-1\},\{i,i+1,\dots,k\}\}. 
\end{displaymath}
and the corresponding bottom set partitions given by
\begin{equation}\label{eqeq1}
\{\{1'\},\{2'\},\dots,\{(i-1)'\},\{i',(i+1)',\dots,k'\}\}. 
\end{equation}
The subgroup $G_i$ is isomorphic to the symmetric group $S_{i}$.

Let $\mathcal{L}(k,i)$ denote Green's left class containing 
the idempotent $\epsilon_i$. It is easy to check that 
$\mathcal{L}(k,i)$ consists of partition diagrams of rank $i$
in $\mathcal{I}_k^*$ whose bottom set-partition is given by
\eqref{eqeq1}.
The linear span $\mathbb{C}[\mathcal{L}(k,i)]$ is a 
$\mathbb{C}[\mathcal{I}_k^*]$-$\mathbb{C}[G_i]$-bimodule
in the obvious way. 
For $\lambda\vdash i$ and the Specht $S_i$-module $S^\lambda$, 
the corresponding irreducible representation of 
$\mathbb{C}[\mathcal{I}_k^*]$ is given by
\begin{equation}
\mathcal{I}_k^\lambda:= \mathbb{C}[\mathcal{L}(k,i)]\otimes_{G_i} S^\lambda. 
\end{equation}
The set $\{\mathcal{I}_k^\lambda\mid \lambda\vdash i, 1\leq i\leq n \}$
is a complete and irredundant list of representatives of the isomorphism
classes of simple $\mathbb{C}[\mathcal{I}_k^*]$-modules.

Let $\mathcal{K}(k,i)$ consist of those diagrams in 
$\mathcal{L}(k,i)$ whose top-set partitions $C_1, C_2, \ldots, C_i$, 
that are connected to $\{1'\}, \{2'\},\ldots,\{(i-1)'\}, \{i',(i+1)',\dots, k'\}$, 
respectively, are ordered with respect to the minimum entry order, i.e.,
$\min C_1 <\min C_2<\cdots<\min C_i$.
Then 
\begin{displaymath}
\mathcal{B}(k,i):=\{d\otimes v_T\mid d\in \mathcal{K}(k,i) \text{ and } T\in\text{SYT}(\lambda)\} 
\end{displaymath}
is a basis of $\mathcal{I}_k^\lambda$.

\subsection{Rook monoids}

Let $R_n$ denote the set consisting of all partial permutation matrices of 
size $n$. This means that each entry of the matrix is either $0$ or $1$
and each row and each column contains at most one entry equal to $1$
(these correspond to the so-called non-attacking rook configurations).
Then $R_n$ is closed under matrix multiplication and hence is a monoid,
called the {\em rook monoid}, alternatively the {\em  symmetric inverse monoid}. 

\subsection{Representation theory of $R_n$}

In this subsection, we recall irreducible representations of 
$R_n$ over $\mathbb{C}$. Again, for details we refer to 
\cite{8} or \cite{GM}. 

For $i=0,1,2,\dots,n$, let $e_i$ be the diagonal matrix in which the
first $i$ diagonal elements are $1$ and all the remaining elements are $0$.
These form a cross-section of Green's $\mathcal{J}$-classes in $R_n$.
The maximal subgroup corresponding to the idempotent $e_i$ is isomorphic to the symmetric group $S_i$. Let $\mathbb{L}_i$ denote the Green's left class 
containing $e_i$.

As usual, the linear span $\mathbb{C}[\mathbb{L}_i]$ is a 
($R_n$-$S_i$)-bimodule. For $0\leq i\leq n$ and $\lambda\vdash i$, 
the $R_n$-module
\begin{equation}
R_n^\lambda = \mathbb{C}[\mathbb{L}_i]\otimes_{S_i} S^\lambda
\end{equation}
is a simple $R_n$-module. In particular, for $i=0$ the corresponding
module $ R_n^\varnothing$ is the {\em trivial module}
$\mathbb{C}_{\text{triv}}$ of $R_n$.
For $i=1$, the module $ R_n^{(1)}$ is the {\em natural} 
(a.k.a. {\em defining}) module 
$\mathbb{C}^n$, on which elements of $R_n$ act simply by 
matrix multiplication. The set 
$\{R_n^\lambda\mid \lambda\vdash i, 0\leq i\leq n \}$
is a complete and irredundant list of representatives of the isomorphism
classes of simple $R_n$-modules.

\subsection{Induction and restriction}

For $n\geq 1$, the rook monoid $R_{n-1}$ embeds into $R_{n}$ 
in the obvious way, that is, by sending a matrix $A\in R_{n-1}$
to the matrix $\left(\begin{array}{cc}A&0\\0&1\end{array}\right)$.

As usual, we denote by $R_n\text{-mod}$ the category of 
finitely generated (and hence finite dimensional) 
$R_n$-modules. With respect to the above monoid embedding of 
$R_{n-1}$ into $R_{n}$, we have the corresponding 
induction and restriction functors:
\begin{equation}
\text{Ind}^{R_n}_{R_{n-1}}: R_{n-1}\text{-mod}\to R_n\text{-mod} \hspace{0.5cm} \text{ and } \hspace{0.5cm} \text{Res}^{R_{n}}_{R_{n-1}}: R_{n}\text{-mod}\to R_{n-1}\text{-mod} .
\end{equation}

Recall, from \cite[Corollary~3.3]{10}, the following description of the 
restriction (resp. the induction) of an irreducible 
representation of $R_n$ (resp. $R_{n-1}$) to $R_{n-1}$ (resp. $R_n$) 
(see also~\cite{9} for a characteristic-free description).
We identify partitions and Young diagrams.
Given a partition $\lambda$, let $\lambda^{+}$ denote the set of 
all partitions obtained from $\lambda$ by adding an addable node.
Similarly, we denote by $\lambda^{-}$ the set of all partitions
obtained from  $\lambda$ by removing a removable node.
For $0\leq i\leq (n-1)$ and $\lambda\vdash i$, we have 
\begin{equation}
\text{Ind}^{R_n}_{R_{n-1}}(R_{n-1}^\lambda)\cong R_{n}^\lambda\oplus \bigoplus_{\mu\in\lambda^+}R_{n}^{\mu},
\end{equation}
From this, we see that $\text{Ind}^{R_n}_{R_{n-1}}$ can be written as
$$ \Ind^{R_n}_{R_{n-1}} 
\cong \widehat{\Ind}^{R_n}_{R_{n-1}}\oplus F, \hspace{0.5cm} $$
where
$\widehat{\Ind}^{R_n}_{R_{n-1}}: R_{n-1}\text{-mod} \to R_{n}\text{-mod}$
and $\mathrm{G}: R_{n-1}\text{-mod} \to R_{n}\text{-mod}$ are given by
\begin{displaymath}
\widehat{\Ind}^{R_n}_{R_{n-1}}(R^{\lambda}_{n-1}) = 
 \bigoplus_{\mu\in\lambda^{+}} R_n^{\mu}
\quad \hspace{0.3cm} \text{ and } \hspace{0.3cm}\quad 
\mathrm{G}(R^{\lambda}_{n-1}) = R^{\lambda}_{n}.
\end{displaymath}

For $0\leq j\leq n$ and $\mu\vdash j$, we similarly have
\begin{equation}\label{eq:res}
\text{Res}^{R_n}_{R_{n-1}}(R_{n}^\mu)\cong \begin{cases}
\displaystyle
R_{n-1}^\mu\oplus \bigoplus_{\nu\in\mu^-}R_{n-1}^{\nu}, & \text{if } |\mu|<n;\\
\\ \displaystyle
\bigoplus_{\nu\in\mu^-}R_{n-1}^{\nu}, & \text{if } |\mu|=n.
\end{cases}
\end{equation}

\subsection{Various identities}

In this section, we derive some identities involving the 
functors $\widehat{\Ind}^{R_n}_{R_{n-1}}$, $\mathrm{G}$ and 
$\Res^{R_n}_{R_{n-1}}$ from the previous subsection.
We will need these identities in the next section. 

For $M\in R_n\text{-mod}$,  from~\cite[Proposition~2.22]{11}, 
we have that the functor $\widehat{\Ind}^{R_n}_{R_{n-1}}$ 
satisfies the following:
\begin{equation}\label{eq:11}
(\widehat{\Ind}^{R_n}_{R_{n-1}}\circ \Res^{R_n}_{R_{n-1}})(M)\cong M\otimes \C^n.
\end{equation}
For $M_1, M_2\in R_n\text{-mod}$, we, obviously, have  
\begin{equation}\label{eq:12}
\Res^{R_n}_{R_{n-1}}(M_1\otimes M_2) = 
\Res^{R_n}_{R_{n-1}}(M_1)\otimes \Res^{R_n}_{R_{n-1}}(M_2).
\end{equation}

\begin{proposition}\label{prop-identity3}
For $i\in\mathbb{Z}_{>0}$, we have 
\begin{equation}\label{eq:13}
(\mathrm{G}\circ \Res^{R_n}_{R_{n-1}})((\C^n)^{\otimes i})\cong 
(\C^n\oplus \C_{\rm{triv}})^{\otimes i}.
\end{equation} 
\end{proposition}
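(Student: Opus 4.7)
The plan is to push $\Res$ inside the tensor power, apply $\mathrm{G}$ additively, and reduce the statement to a ``stability'' identity relating the decompositions of $(\C^{n-1})^{\otimes j}$ and $(\C^n)^{\otimes j}$ under the relabeling $R_{n-1}^\lambda \leftrightarrow R_n^\lambda$ supplied by $\mathrm{G}$.

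First, by iterating \eqref{eq:12} and then using \eqref{eq:res} with $\mu=(1)$ (so that $|\mu|<n$), one obtains
\begin{equation*}
\Res^{R_n}_{R_{n-1}}((\C^n)^{\otimes i}) \;\cong\; (\C^{n-1}\oplus\C_{\mathrm{triv}})^{\otimes i}
\end{equation*}
as $R_{n-1}$-modules. Expanding this tensor power by distributivity, together with the identification $M\otimes\C_{\mathrm{triv}}\cong M$, yields
\begin{equation*}
\Res^{R_n}_{R_{n-1}}((\C^n)^{\otimes i}) \;\cong\; \bigoplus_{j=0}^{i}\binom{i}{j}\,(\C^{n-1})^{\otimes j}.
\end{equation*}

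Next, since $R_{n-1}\text{-mod}$ is semisimple and $\mathrm{G}$ is additive (being defined on the simples by $\mathrm{G}(R_{n-1}^\lambda)=R_n^\lambda$ and extended as a functor), applying $\mathrm{G}$ term by term gives
\begin{equation*}
(\mathrm{G}\circ \Res)((\C^n)^{\otimes i}) \;\cong\; \bigoplus_{j=0}^{i}\binom{i}{j}\,\mathrm{G}((\C^{n-1})^{\otimes j}).
\end{equation*}
Reversing the same binomial expansion on the target side, the proposition then follows from the key identity
\begin{equation*}
\mathrm{G}((\C^{n-1})^{\otimes j}) \;\cong\; (\C^n)^{\otimes j}, \qquad 0\leq j\leq i.
\end{equation*}

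To establish this key identity, the definition of $\mathrm{G}$ on simples reduces matters to showing that for each partition $\lambda$ the multiplicity of $R_m^\lambda$ in $(\C^m)^{\otimes j}$ is independent of $m$ (for $m$ sufficiently large relative to $j$). This stability is a direct consequence of the Schur--Weyl duality between $R_m$ and $\mathbb{C}[\mathcal{I}_j^*]$ established in \cite{5}: the common multiplicity equals $\dim \mathcal{I}_j^\lambda$ for $|\lambda|\geq 1$, and is zero when $\lambda=\varnothing$, since the zero element of $R_m$ annihilates $(\C^m)^{\otimes j}$ for $j\geq 1$ while acting as the identity on $\C_{\mathrm{triv}}$. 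Hence the decompositions of $(\C^{n-1})^{\otimes j}$ and $(\C^n)^{\otimes j}$ into simples match multiplicity-by-multiplicity under $R_{n-1}^\lambda\leftrightarrow R_n^\lambda$, which is exactly what $\mathrm{G}$ implements.

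The main obstacle is the stability step: one needs that $[R_m^\lambda:(\C^m)^{\otimes j}]$ does not depend on $m$. If one wishes to avoid a direct appeal to \cite{5}, an alternative is induction on $j$, rewriting both $(\C^m)^{\otimes j}$ for $m=n$ and $m=n-1$ via \eqref{eq:11} as $\widehat{\Ind}\circ\Res$ applied to $(\C^m)^{\otimes(j-1)}$; but this trades the stability problem for an analogous commutation between $\mathrm{G}$ and $\widehat{\Ind}\circ\Res$, which must itself be verified.
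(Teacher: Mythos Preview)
Your argument is correct and follows essentially the same route as the paper's proof: both first use \eqref{eq:12} and \eqref{eq:res} to rewrite the claim as $\mathrm{G}\big((\C^{n-1})^{\otimes j}\big)\cong(\C^n)^{\otimes j}$, and then deduce this from the stability of the multiplicity $[\,(\C^m)^{\otimes j}:R_m^\lambda\,]$ in $m$. The only cosmetic difference is the source for stability: the paper quotes \cite[Example~3.18]{Solomon} or \cite[Theorem~2.20]{11}, whereas you extract it from the Schur--Weyl duality of \cite{5} (the common multiplicity being $\dim\mathcal{I}_j^\lambda$); these are equivalent justifications of the same fact.
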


\begin{proof}
Using~\eqref{eq:res} and \eqref{eq:12}, we rewrite
\eqref{eq:13} as
\begin{displaymath}
\mathrm{G} ((\C^{n-1}\oplus \C_{\text{triv}})^{\otimes i})\cong 
(\C^n\oplus \C_{\text{triv}})^{\otimes i}.
\end{displaymath}
Using the additivity of the tensor product, to prove the above,
it is enough to show that 
\begin{displaymath}
\mathrm{G} ((\C^{n-1})^{\otimes i})\cong 
(\C^n)^{\otimes i},
\end{displaymath}
for every $i$. The latter follows directly from the definition
of $\mathrm{G}$ and the observation in \cite[Example~3.18]{Solomon} 
or \cite[Theorem~2.20]{11} that, for a fixed 
$\lambda\vdash k\leq n$, the multiplicity of the 
simple module $R_n^\lambda$ in  
$(\C^n)^{\otimes i}$ depends on $\lambda$
and $i$ but not on $n$.
\end{proof}

The proof of Proposition~\ref{prop-identity3} seems to suggest that
$\mathrm{G}$ is a monoidal function. We will not need this property
and, at the moment, we do not know how to prove it, if it is true.

\subsection{Schur--Weyl duality between rook monoid and dual symmetric inverse monoid}

The Schur--Weyl duality between the rook monoid $R_n$ and the 
dual symmetric inverse monoid both acting on the tensor space 
$(\mathbb{C}^n)^{\otimes k}$ was discovered in~\cite{5}. 
Here the dual symmetric inverse monoid $\mathcal{I}_k^*$ 
acts on $(\mathbb{C}^n)^{\otimes k}$ by  restriction
from the partition algebra. This Schur--Weyl duality 
(and a mild extension of it, see Proposition~\ref{prop:mild}) 
is crucial in the proof of Theorem~\ref{thm:cr}. 

\begin{theorem}
Consider the obvious action of $R_n$ on $(\mathbb{C}^n)^{\otimes k}$.
Then the image of $\mathbb{C}[R_n]$ in
$\text{End}_{\mathbb{C}}((\mathbb{C}^n)^{\otimes k})$
under this action coincides with 
$\text{End}_{\mathbb{C}[\mathcal{I}_k^*]}((\mathbb{C}^n)^{\otimes k})$. 
\end{theorem}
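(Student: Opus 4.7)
The plan is to deduce this Schur--Weyl pairing from the double centralizer theorem, after first checking that the two actions commute.

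First, I would verify that the image $\phi_k(\mathbb{C}[\mathcal{I}_k^*])$ (obtained by restricting the partition-algebra action $\phi_k$ from Section~\ref{sec:phi} to the submonoid $\mathcal{I}_k^* \subseteq \mathcal{P}_k(n)$) commutes with the natural $R_n$-action on $(\mathbb{C}^n)^{\otimes k}$. Commutation with $S_n \subseteq R_n$ is immediate from the Jones--Martin theorem. Since $R_n$ is generated as a monoid by $S_n$ together with the diagonal idempotents $e_i$, it is enough to verify commutation with each $e_i$. This is a direct computation on basis tensors $e_{j_1} \otimes \cdots \otimes e_{j_k}$: $e_i$ either fixes such a tensor (when all $j_s \leq i$) or kills it, and the defining formula for $\phi_k(d)$ when every part of $d$ is propagating respects this dichotomy cleanly, since propagation forces each top index to match some bottom index.

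Second, since $\mathbb{C}[R_n]$ is semi-simple (a general fact for finite inverse monoid algebras over $\mathbb{C}$), its image $A$ in $\End_{\mathbb{C}}((\mathbb{C}^n)^{\otimes k})$ is semi-simple as well. The double centralizer theorem then yields $A = \End_B((\mathbb{C}^n)^{\otimes k})$, where $B := \End_A((\mathbb{C}^n)^{\otimes k})$. Step~1 gives the inclusion $\phi_k(\mathbb{C}[\mathcal{I}_k^*]) \subseteq B$, so the theorem reduces to the reverse inclusion, which by semi-simplicity can be verified by comparing dimensions. Artin--Wedderburn yields $\dim B = \sum_\lambda m_\lambda^2$, where $m_\lambda$ is the multiplicity of the simple $R_n^\lambda$ in $(\mathbb{C}^n)^{\otimes k}$, while $\dim \phi_k(\mathbb{C}[\mathcal{I}_k^*]) = \sum_\mu (\dim \mathcal{I}_k^\mu)^2$ provided the $\mathcal{I}_k^*$-action is faithful on $(\mathbb{C}^n)^{\otimes k}$, which can be checked separately.

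Third, the core of the argument is the combinatorial identity $m_\lambda = \dim \mathcal{I}_k^\lambda$ for every $\lambda \vdash i$ with $1 \leq i \leq \min(k,n)$, together with vanishing of $m_\lambda$ outside this range. This I would prove by induction on $k$, using the branching rule~\eqref{eq:res} for $R_n$ to relate the multiplicities in $(\mathbb{C}^n)^{\otimes k}$ and $(\mathbb{C}^n)^{\otimes(k-1)}$ (noting $\mathbb{C}^n \cong \widehat{\Ind}^{R_n}_{R_{n-1}}(\mathbb{C}_{\mathrm{triv}}) \oplus \mathrm{G}(\mathbb{C}_{\mathrm{triv}})$-style decompositions), and matching against the corresponding branching/dimension formula for $\mathcal{I}_k^\lambda$ encoded by the basis $\mathcal{B}(k,i)$ from Section~\ref{dualreps}. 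Once this multiplicity identity is in hand, summing $m_\lambda^2$ over all $\lambda$ matches $\dim \phi_k(\mathbb{C}[\mathcal{I}_k^*])$, forcing equality in the previous step.

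The main obstacle is the multiplicity identification $m_\lambda = \dim \mathcal{I}_k^\lambda$ (equivalently, the statement that the simple $R_n$-multiplicities in $(\mathbb{C}^n)^{\otimes k}$ are counted by the set-partition-tableau structures underlying $\mathcal{I}_k^\lambda$), together with the faithfulness of the $\mathcal{I}_k^*$-action. Once these are established, the remainder is a purely formal application of double centralizer.
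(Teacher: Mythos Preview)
The paper does not prove this theorem at all: it is quoted from \cite{5} (Kudryavtseva--Mazorchuk), so there is no in-paper argument to compare against. Your double-centralizer strategy is a perfectly standard and viable route to such a result, and the reduction you describe (commutation, semi-simplicity, then matching $\dim B=\sum_\lambda m_\lambda^2$ against $\dim\phi_k(\mathbb{C}[\mathcal{I}_k^*])$) is correct in outline.

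There is, however, one genuine gap. You assert that faithfulness of the $\mathcal{I}_k^*$-action ``can be checked separately'' and then use it to write $\dim\phi_k(\mathbb{C}[\mathcal{I}_k^*])=\sum_\mu(\dim\mathcal{I}_k^\mu)^2$. This is false whenever $n<k$: for instance, when $n=1$ every element of $\mathcal{I}_k^*$ acts as the identity on $(\mathbb{C}^1)^{\otimes k}$, so the image is one-dimensional while $\sum_\mu(\dim\mathcal{I}_k^\mu)^2=|\mathcal{I}_k^*|>1$. The theorem is nevertheless true for all $n,k$, so your argument must be patched. The fix is to replace the faithfulness claim by the sharper statement that the simple $\mathcal{I}_k^\mu$ occurs in $(\mathbb{C}^n)^{\otimes k}$ if and only if $1\le|\mu|\le\min(n,k)$; then $\dim\phi_k(\mathbb{C}[\mathcal{I}_k^*])=\sum_{1\le|\mu|\le\min(n,k)}(\dim\mathcal{I}_k^\mu)^2$, which does match $\dim B$ once you have the multiplicity identity $m_\lambda=\dim\mathcal{I}_k^\lambda$ on that same range. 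Establishing which $\mathcal{I}_k^\mu$ appear is of roughly the same difficulty as the multiplicity identity itself (both amount to the bimodule decomposition stated in the Corollary following the theorem), so you should plan to prove them together rather than treat faithfulness as a side remark.
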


An immediate corollary of the above theorem combined with the usual 
double centralizer property for semi-simple algebra, see e.g. \cite{13},
we have the following.

\begin{corollary}
We have the following decomposition of $(\mathbb{C}^n)^{\otimes k}$,
as a $(\mathbb{C}[R_n]$-$\mathbb{C}[\mathcal{I}_k^*])$-bimodule:
$$ (\mathbb{C}^n)^{\otimes k} \cong \bigoplus_{\substack{\lambda\vdash i\\ 1\leq i\leq \min\{n,k\}}} R_n^\lambda\otimes \mathcal{I}_k^\lambda.$$
\end{corollary}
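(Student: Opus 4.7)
The plan is to apply a standard double-centralizer argument. Since $\mathcal{I}_k^*$ is an inverse monoid, its complex monoid algebra $\mathbb{C}[\mathcal{I}_k^*]$ is semi-simple, and the same applies to $\mathbb{C}[R_n]$. Let $V = (\mathbb{C}^n)^{\otimes k}$ and set $A = \mathbb{C}[R_n]$, $B = \mathbb{C}[\mathcal{I}_k^*]$. First I would invoke the preceding theorem, which asserts that the image of $A$ in $\text{End}_{\mathbb{C}}(V)$ equals $\text{End}_B(V)$. By the usual double centralizer theorem for semi-simple algebras (as referenced in \cite{13}), this forces the image of $B$ in $\text{End}_{\mathbb{C}}(V)$ to equal $\text{End}_A(V)$.

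Next, by the structure theory of semi-simple actions, the bimodule $V$ decomposes as
\begin{displaymath}
V \cong \bigoplus_{\lambda} M_\lambda \otimes N_\lambda,
\end{displaymath}
where the sum runs over those labels $\lambda$ for which both a simple $A$-summand $M_\lambda$ and a simple $B$-summand $N_\lambda$ appear, with the two simples paired uniquely by the bimodule structure. It remains to match the labels: one must show that the simple $R_n$-module $R_n^\lambda$ is paired with $\mathcal{I}_k^\lambda$, and that the summation index is exactly $\lambda \vdash i$ with $1 \leq i \leq \min\{n,k\}$.

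For the pairing, I would argue as follows. By \eqref{eq:res} and induction on $k$, an analysis of $(\mathbb{C}^n)^{\otimes k}$ as an $R_n$-module (using, e.g., \cite[Example~3.18]{Solomon} or \cite[Theorem~2.20]{11} cited in the paper) shows that precisely those $R_n^\lambda$ with $\lambda \vdash i$, $1 \leq i \leq \min\{n,k\}$, occur as simple summands; the trivial module $R_n^\varnothing$ does not occur because the elements of $\mathcal{I}_k^*$ acting on tensors with nontrivial propagating structure preclude fixed-vector contributions across all diagrams. Symmetrically, by the Schur--Weyl duality as a bimodule, the multiplicity of $R_n^\lambda$ in $V$ equals $\dim \mathcal{I}_k^\lambda$ and conversely; matching dimensions on both sides via $\sum_{\lambda} (\dim R_n^\lambda)(\dim \mathcal{I}_k^\lambda) = n^k$ will identify the pairing uniquely, since the centralizer relation forces $M_\lambda = R_n^\lambda \Leftrightarrow N_\lambda = \mathcal{I}_k^\lambda$.

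The one subtle point, which is the main obstacle, is verifying that the labels really match up by the \emph{same} partition $\lambda$, rather than by some bijective but non-obvious correspondence. The cleanest way to handle this is via characters: one computes that the character of $V$ evaluated on a diagonal idempotent $\epsilon_i$ (or a cycle in its maximal subgroup $G_i \cong S_i$) reproduces, on the $B$-side, the expected Schur--Weyl multiplicity formula, which forces the identification $M_\lambda = R_n^\lambda$ and $N_\lambda = \mathcal{I}_k^\lambda$ simultaneously. Everything else then reduces to bookkeeping about the range of $i$.
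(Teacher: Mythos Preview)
Your core approach---invoke semi-simplicity of both monoid algebras, apply the preceding theorem, and use the double centralizer property---is exactly what the paper does; indeed the paper treats this as a one-line corollary, citing only the double centralizer property from \cite{13} and offering no further argument.

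That said, your elaboration on the label matching has some genuine problems. First, your explanation for why $R_n^\varnothing$ does not occur is wrong: the exclusion has nothing to do with the $\mathcal{I}_k^*$-action or ``propagating structure''; rather, the zero matrix in $R_n$ annihilates every basis vector $e_i$ (and hence all of $(\mathbb{C}^n)^{\otimes k}$), whereas it acts as the identity on $\mathbb{C}_{\triv}=R_n^\varnothing$. Second, the dimension identity $\sum_\lambda (\dim R_n^\lambda)(\dim \mathcal{I}_k^\lambda)=n^k$ cannot by itself identify the pairing: if two distinct $\lambda,\lambda'$ happen to give $\dim R_n^\lambda=\dim R_n^{\lambda'}$ and $\dim \mathcal{I}_k^\lambda=\dim \mathcal{I}_k^{\lambda'}$, dimension counting cannot distinguish them. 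The honest way to match labels is structural: both $R_n^\lambda$ and $\mathcal{I}_k^\lambda$ are built by tensoring a left cell module over the maximal subgroup $S_i$ with the same Specht module $S^\lambda$, and one checks directly that the idempotent $\epsilon_i$ (or its counterpart $e_i$) cuts out the correct isotypic component on each side. Your final paragraph gestures at this via characters, but as written it is a plan rather than a proof. Since the paper itself does not spell this out either, your proposal is at the same level of rigor as the paper once the incorrect justifications are removed.
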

    
\section{Main results}

\subsection{Iterated restriction-induction}\label{subsec}

In this section, we study the iterated restriction-induction of 
the trivial representation of the rook monoid $R_n$.
As we already mentioned in the introduction,
in the case of the symmetric group $S_n$,
the iterated restriction-induction of 
the trivial representation is isomorphic to a tensor power 
of the defining representation of $S_n$ which is exactly
what appears in the corresponding Schur-Weyl duality.
Its connection to the construction of the Bratteli diagram 
for a multiplicity-free tower of partition algebras was 
elaborated in \cite[Section~3]{13}.

For $k\geq 1$ and fixed $n\geq 1$, consider the following $k$-fold
iteration of the restriction-induction of the trivial representation 
$ \C_{\text{triv}}$ of $R_{n}$:
\begin{displaymath}
W_{k,n}= \left(\Ind^{R_{n}}_{R_{n-1}}\circ\Res^{R_{n}}_{R_{n-1}}\right)^{k} (\C_\text{triv}). 
\end{displaymath}
The following theorem is one of our main results.  It describes a 
decomposition of $W_{k,n}$ into various tensor powers of the 
defining representation $\mathbb{C}^n$. It also connects
a combinatorial entity to counting certain multiplicities in $W_{k,n}$. 
Ultimately, we want to show that the algebra $\mathcal{P}_k(\delta)$
acts on $W_{k,n}$ centralizing the action of $R_n$. 
The following theorem is an important step in that direction.
 
\begin{theorem}\label{thm:1}
Let $V_n=\mathbb{C}^n$. For $k\in\mathbb{Z}_{>0}$, we have the 
following isomorphism of $R_n$-modules:
$$ W_{k,n} \cong m_{k}^{k} V_n^{\otimes k} \oplus m^{k}_{k-1} V_n^{\otimes k-1}\oplus \cdots\oplus m^{k}_{k-r}V_n^{\otimes k-r}\oplus \cdots \oplus m^{k}_{0} V_n^{\otimes 0}, $$
where $m^{k}_i =\binom{k}{i}B(k-i)$, for $ 0\leq i\leq k$.
\end{theorem}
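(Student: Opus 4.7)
The plan is to prove Theorem~\ref{thm:1} by induction on $k$, using the tools assembled in the previous subsection. Writing $\mathrm{F} = \Ind^{R_n}_{R_{n-1}} \circ \Res^{R_n}_{R_{n-1}}$ and using $\Ind^{R_n}_{R_{n-1}} \cong \widehat{\Ind}^{R_n}_{R_{n-1}} \oplus \mathrm{G}$, we get a natural decomposition
\begin{displaymath}
\mathrm{F}(M) \cong \bigl(\widehat{\Ind}^{R_n}_{R_{n-1}}\circ \Res^{R_n}_{R_{n-1}}\bigr)(M) \oplus \bigl(\mathrm{G}\circ \Res^{R_n}_{R_{n-1}}\bigr)(M).
\end{displaymath}
For $M = V_n^{\otimes j}$, identity~\eqref{eq:11} gives that the first summand is $V_n^{\otimes (j+1)}$, while Proposition~\ref{prop-identity3} gives that the second summand is $(V_n \oplus \C_{\triv})^{\otimes j}$. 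Expanding the latter binomially in $R_n\text{-mod}$ yields
\begin{displaymath}
\mathrm{F}(V_n^{\otimes j}) \cong V_n^{\otimes (j+1)} \oplus \bigoplus_{l=0}^{j} \binom{j}{l} V_n^{\otimes l}.
\end{displaymath}

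The base case $k=1$ is immediate: $\Res^{R_n}_{R_{n-1}}(\C_{\triv}) = \C_{\triv}$, and then $\widehat{\Ind}$ produces $V_n$ while $\mathrm{G}$ produces $\C_{\triv}$, so $W_{1,n} \cong V_n \oplus V_n^{\otimes 0}$, matching $m_1^1 = m_0^1 = 1$. For the inductive step, I would assume $W_{k,n} \cong \bigoplus_{j=0}^{k} m_j^k\, V_n^{\otimes j}$ with $m_j^k = \binom{k}{j} B(k-j)$, apply $\mathrm{F}$ termwise using the formula above, and read off the coefficient of $V_n^{\otimes i}$ in $W_{k+1,n}$:
\begin{displaymath}
m_i^{k+1} = m_{i-1}^k + \sum_{j=i}^{k} \binom{j}{i} m_j^k.
\end{displaymath}

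Substituting $m_j^k = \binom{k}{j} B(k-j)$ and using the standard identity $\binom{k}{j}\binom{j}{i} = \binom{k}{i}\binom{k-i}{j-i}$ together with the change of variables $m = j-i$, the sum becomes
\begin{displaymath}
\binom{k}{i} \sum_{m=0}^{k-i} \binom{k-i}{m} B(k-i-m) = \binom{k}{i} B(k-i+1),
\end{displaymath}
where the last equality is the classical Bell number recurrence $B(n+1) = \sum_{m=0}^{n} \binom{n}{m} B(n-m)$. Combining with the term $m_{i-1}^k = \binom{k}{i-1} B(k-i+1)$ and using Pascal's identity yields $m_i^{k+1} = \binom{k+1}{i} B(k+1-i)$, completing the induction.

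The main obstacle is organizational rather than conceptual: one must be careful to apply $\mathrm{F}$ on the decomposition $W_{k,n}$ as a direct sum of $R_n$-modules and verify that the two identities~\eqref{eq:11} and~\eqref{eq:13} indeed cover both summands of $\mathrm{F}$. Once this is done, the combinatorial identification of $m_i^{k+1}$ with $\binom{k+1}{i} B(k+1-i)$ is a direct consequence of the Bell recurrence and Pascal's rule.
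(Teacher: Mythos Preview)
Your proof is correct and follows essentially the same approach as the paper: induction on $k$, splitting $\Ind^{R_n}_{R_{n-1}}\cong \widehat{\Ind}^{R_n}_{R_{n-1}}\oplus \mathrm{G}$, applying \eqref{eq:11} and \eqref{eq:13} to each tensor-power summand, and then reducing the resulting recurrence to $\binom{k+1}{i}B(k+1-i)$ via Pascal's identity and the Bell recurrence. Your bookkeeping is in fact somewhat tidier than the paper's, since you first isolate the clean formula $\mathrm{F}(V_n^{\otimes j})\cong V_n^{\otimes(j+1)}\oplus\bigoplus_{l=0}^{j}\binom{j}{l}V_n^{\otimes l}$ and then read off the recurrence $m_i^{k+1}=m_{i-1}^k+\sum_{j\ge i}\binom{j}{i}m_j^k$ directly.
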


\begin{proof}
We prove the statement using induction on $k$. For $k=1$, we have:
\begin{displaymath}
W_{1,n} = \Ind^{R_n}_{R_{n-1}}\circ \Res^{R_n}_{R_{n-1}}(\C_{\triv}) 
\cong \Ind^{R_n}_{R_{n-1}}(\C_{\triv})\cong V_n\oplus \C_{\triv}.
\end{displaymath}
For $k=2$, we have:
\begin{align*}
W_{2,n} & = \left(\Ind^{R_n}_{R_{n-1}}\circ \Res^{R_n}_{R_{n-1}}\right)^{2}(\C_{\triv})\\ 
&\cong \left(\Ind^{R_n}_{R_{n-1}}\circ \Res^{R_n}_{R_{n-1}}\right) (V_n\oplus \C_{\triv})\\
&\cong \left(\widehat{\Ind}^{R_n}_{R_{n-1}}\circ \Res^{R_n}_{R_{n-1}}\right) (V_n\oplus \C_{\triv})
\oplus \left(\mathrm{G}\circ \Res^{R_n}_{R_{n-1}}\right)(V_n \oplus \C_{\triv})\\
&\cong V_n^{\otimes 2}\oplus 2 V_n\oplus 2\C_{\triv}.
\end{align*}
For $0\leq i\leq k$,  let $m_{i}^k$ denote the multiplicity of 
$V_n^{\otimes i}$ in  $W_{k,n}$. Then, from the above, for $k=1, 2$, 
these multiplicities are as follows: 
$$m^{1}_{i} = \binom{1}{i}B(1-i)\quad \text{ and }\quad
m^{2}_i=\binom{2}{i}B(2-i).$$

To proceed, assume that
$$W_{k-1,n} = \left(\Ind^{R_n}_{R_{n-1}}\circ \Res^{R_{n}}_{R_{n-1}}\right)^{(k-1)}\cong m_{k-1}^{k-1}(V_n)^{\otimes (k-1)}\oplus m_{k-2}^{k-1}(V_n)^{\otimes (k-2)}\oplus \cdots \oplus m_{1}^{k-1}V_n\oplus m_{0}^{k-1}\C_{\triv},$$
where 
$$m_{i}^{k-1}= \binom{k-1}{i}B(k-1-i), \, \,\text{ for }\,\, 0\leq i\leq (k-1).$$

Using~\eqref{eq:11}-\eqref{eq:13} and the induction hypothesis, we compute:
\begin{align*}
    W_{k,n}&\cong \left(\widehat{\Ind}^{R_n}_{R_{n-1}}\circ \Res^{R_{n}}_{R_{n-1}}\right)\circ \left(\Ind^{R_n}_{R_{n-1}}\circ \Res^{R_{n}}_{R_{n-1}}\right)^{(k-1)}(\C_{\triv})\oplus\\
  &\hspace{0.5cm} \oplus \left(\mathrm{G}\circ \Res^{R_n}_{R_{n-1}}\right)\circ \left(\Ind^{R_n}_{R_{n-1}}\circ \Res^{R_{n}}_{R_{n-1}}\right)^{(k-1)}(\C_{\triv})\\
& \cong V_n\otimes \left(m_{k-1}^{k-1}V_n^{\otimes (k-1)}\oplus m_{k-2}^{k-1}V_n^{\otimes (k-2)}\oplus \cdots \oplus m_{1}^{k-1}V_n\oplus m_{0}^{k-1}\C_{\triv}\right)\oplus\\
& \oplus m_{k-1}^{k-1}(V_n\oplus \C_{\triv})^{\otimes (k-1)}\oplus m_{k-2}^{k-1}(V_n\oplus \C_{\triv})^{\otimes (k-2)}\oplus \cdots \oplus m_{1}^{k-1}(V_n\oplus\C_{\triv})\oplus m_{0}^{k-1}\C_{\triv}.
\end{align*}
Then, for $0\leq i\leq k$, the multiplicity $m^k_{k-i}$ of 
$V_n^{\otimes(k-i)}$ in $W_{k,n}$ con be computed as  follows:
\begin{align*} m^{k}_{k-i} &= m_{k-i-1}^{k-1}+
\binom{k-i}{0}m^{k-1}_{k-i}+
\binom{k-i-1}{1}m^{k-1}_{k-i+1}+
\cdots+\binom{k-2}{i-2}m^{k-1}_{k-2}+\binom{k-1}{i-1}m^{k-1}_{k-1}\\
&=\binom{k-1}{i}B(i)+\binom{k-1}{i-1}B(i-1)+\binom{k-i+1}{1}
\binom{k-1}{i-2}B(i-2)+\\&\,\,\,\,\,+\binom{k-i+2}{2}
\binom{k-1}{i-3}B(i-3)+\cdots+ \binom{k-2}{i-2}\binom{k-1}{i}B(1)+
\binom{k-1}{i-1}\binom{k}{0}B(0).
\end{align*}
Using the identity $$\binom{k-i+m}{m}\binom{k-1}{i-m+1}=\binom{k-1}{i-1}\binom{i-1}{i-m-1},$$  together with Pascal's identity for binomial coefficients and 
the fact that Bell numbers satisfy the recurrence
$B(i)=\displaystyle{\sum_{l=0}^{i-1}}\binom{i-1}{l}B(l)$, we obtain that
$m^{k}_{k-i} = \displaystyle{\binom{k}{k-i}}B(i)$ and we are done.
\end{proof}

\subsection{Restricted set-partitions and generalized Bell numbers}

In what follows, $e_1, \ldots, e_n$ denotes the standard basis vectors of $\C^n$. Let $e_0$ denote a basis of the trivial representation $\C_{\text{triv}} $ so that
$$ \sigma e_{0} = e_0, \, \, \forall \sigma\in R_n. $$
From the decomposition in Theorem~\ref{thm:1}, it follows that 
the following vectors form a basis in $W_{k,n}$:
\begin{equation}\label{eq:1}
(v_{i_1}\otimes v_{i_2}\otimes \cdots \otimes v_{i_k},  \mathcal{D}),
\end{equation}
where $\mathcal{D} =\{C_1,C_2,\ldots, C_l\}$ is a set-partition of a subset of $[k]$ such that $v_{i_j}= e_0$ if and only if  $j\in C_1\cup C_2\cup\cdots\cup C_l$.
In particular, $v_{i_j}\in\{e_1,e_2,\ldots, e_n\}$
if $j\not\in C_1\cup C_2\cup\cdots\cup C_l$. The action of $R_n$ on 
the basis vectors in \eqref{eq:1} is diagonal, that is, it is 
given, for $\sigma\in R_n$, by:
$$\sigma(v_{i_1}\otimes v_{i_2}\otimes \cdots \otimes v_{i_k},  \mathcal{D}) 
= (\sigma v_{i_1}\otimes \sigma v_{i_2}\otimes \cdots \otimes \sigma v_{i_k},  \mathcal{D}). $$

Next we explain how a vector of the form~\eqref{eq:1} 
naturally corresponds to a restricted set-partition 
and vice versa. For this, we start with recalling 
the definition of a  restricted set-partition.

For $n\geq 1$, a set-partition $S$ of a set $A$ 
containing the set $[n]$ is called an {\em $n$-restricted 
set-partition} provided that the elements of $[n]$ are 
in distinct parts of $S$. In particular, $S$ has at 
least $n$ parts. The number of $n$-restricted set-partitions 
of $[n+k]$ is called the {\em generalized Bell number},
see \cite{3}. Note that, for $n=0$, we get back the 
original Bell number $B(k)$. In the following proposition
we show that the dimension of $W_{k,n}$ is given by a 
generalized Bell number. 

\begin{proposition}\label{prop:2}
For $n\geq 1$ and $k\geq 0$, there is a one-to-one  
correspondence between $n$-restricted set-partitions 
of $[n+k]$ 
and basis vectors of the form as in \eqref{eq:1}. 
\end{proposition}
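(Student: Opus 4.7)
My plan is to exhibit an explicit bijection between the two sets in the statement.

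\emph{Step 1: Forward map.} Given a basis vector $(v_{i_1}\otimes\cdots\otimes v_{i_k},\mathcal{D})$ with $\mathcal{D}=\{C_1,\ldots,C_l\}$ partitioning $S:=C_1\cup\cdots\cup C_l\subseteq[k]$, I would identify $[k]$ with $\{n+1,\ldots,n+k\}$ via $j\mapsto n+j$ and associate to it the set-partition of $[n+k]$ whose parts are the translated blocks $n+C_s:=\{n+j:j\in C_s\}$ for $s=1,\ldots,l$, together with, for each $i\in[n]$,
\[
P_i:=\{i\}\cup\bigl\{\,n+j\,:\,j\in[k]\setminus S,\ v_{i_j}=e_i\,\bigr\}.
\]
The hypothesis in~\eqref{eq:1} that $v_{i_j}=e_0$ iff $j\in S$ forces every $j\in[k]\setminus S$ to contribute to exactly one $P_i$, so together these do partition $[n+k]$; and the output is $n$-restricted because $i\in P_i$ for each $i\in[n]$.

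\emph{Step 2: Inverse map.} Starting from an $n$-restricted set-partition of $[n+k]$, I would let $P_i$ denote the unique block containing $i\in[n]$ and let $Q_1,\ldots,Q_l\subseteq\{n+1,\ldots,n+k\}$ be the remaining blocks (if any). Setting $C_s:=\{j:n+j\in Q_s\}$ defines $\mathcal{D}$, and putting $v_{i_j}:=e_i$ when $n+j\in P_i$, respectively $v_{i_j}:=e_0$ when $n+j$ lies in some $Q_s$, recovers the tensor factors. By construction the compatibility condition $v_{i_j}=e_0 \Leftrightarrow j\in C_1\cup\cdots\cup C_l$ holds.

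\emph{Step 3: Verification and obstacle.} Checking that the two assignments are mutually inverse is a direct unpacking of the definitions, and I do not expect any real obstacle---the argument is pure bookkeeping. The only edge cases worth noting are $S=\varnothing$ (empty $\mathcal{D}$) and $P_i=\{i\}$ (no $n+j$ with label $i$), both of which are permitted. As a sanity check, combining the bijection with Theorem~\ref{thm:1} gives $\dim W_{k,n}=\sum_{i=0}^{k}\binom{k}{i}B(k-i)n^{i}$, which, via the substitution $m=k-i$, coincides with the standard formula $\sum_{m=0}^{k}\binom{k}{m}B(m)n^{k-m}$ for the number of $n$-restricted set-partitions of $[n+k]$ obtained by stratifying over the subset of $\{n+1,\ldots,n+k\}$ lying in blocks disjoint from $[n]$.
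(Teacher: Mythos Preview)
Your proposal is correct and follows essentially the same approach as the paper: both construct the bijection by translating $[k]$ to $\{n+1,\dots,n+k\}$, letting the ``extra'' blocks $C_s$ become the blocks disjoint from $[n]$, and reading off the block $P_i\ni i$ from the positions $j$ with $v_{i_j}=e_i$. The only cosmetic differences are that the paper presents the inverse direction first and omits your closing sanity check.
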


\begin{proof}
For $0\leq t\leq k$, suppose that 
$\boldsymbol{\xi}=\{B_1, B_2,\ldots, B_n, C_{n+1}, \ldots, C_{t+n}\}$ is an 
$n$-restricted set-partition of $[k+n]$ into 
$(n+t)$-parts
such that $i\in B_{i}$, for $1\leq i\leq n$. 
Consider the unique bijection $\Phi$ between the sets 
$\{n+1,n+2,\ldots, n+k\}$ and $[k]$ which preserves the
natural order. 

Let $X = \displaystyle{\bigcup_{i=1}^{n}}(B_{i}\cap \{n+1,n+2,\ldots, n+k\})$. Consider the element $v_{i_1}\otimes v_{i_2}\otimes \cdots \otimes v_{i_k}$ with the following defining property:
\begin{itemize}
\item $v_{i_s}\in \{e_1,e_2,\ldots, e_n\}$ if and only if 
$s\in \Phi(X)$. Moreover, if $s\in \Phi(X)$, then 
there exists a unique $1\leq l\leq n$ such that $s+n\in B_{l}$,
and we have $v_{i_s}=e_{l}$;
\item if $s\not\in \Phi(X)$, then $v_{i_s}=e_0$. 
\end{itemize}
We also have that $\{C_{n+1},\ldots, C_{n+t}\}$ forms a set-partition of 
the set $\{n+1,n+2,\ldots, n+k\}\setminus X$. Therefore,
$\{\Phi(C_{n+1}),\ldots, \Phi(C_{n+t})\}$ forms a set-partition of 
$[k]\setminus\Phi(X)$. Now, we can associate to our
$n$-restricted set-partition $\boldsymbol{\xi}$ the vector 
$$(v_{i_1}\otimes \cdots \otimes v_{i_k},  
\{\Phi(C_{n+1}),\ldots, \Phi(C_{n+t})\})$$ 
as in \eqref{eq:1}.

Conversely, let 
$(w_{i_1}\otimes w_{i_2}\otimes\cdots \otimes w_{i_k},\{D_1, D_2,\ldots, D_m\})$ 
be a vector of the form as in \eqref{eq:1}. For $1\leq l\leq n$, consider
$$B_{l}:=\{j+n\mid w_{i_j}=e_{l} \}\cup \{l\}. $$

Then $\{B_1,\ldots, B_n, D'_{1}, D'_{2}, \ldots,D'_{m}\}$ is an 
$n$-restricted set-partition of $[n+k]$, where $D_{i}'$ is 
obtained from $D_i$ by shifting every element by $n$. 
\end{proof}

\subsection{Partition algebras as centralizers over rook monoid}\label{sec:cent}

In the previous subsection, we constructed a basis of $W_{k,n}$ consisting 
of vectors of the form as in \eqref{eq:1}. Next we will explain that such 
a vector naturally corresponds to a partition diagram in $P_k(\delta)$.

Consider a vector 
$(v_{i_1}\otimes v_{i_2}\otimes \cdots\otimes v_{i_k}, \mathcal{D})$, 
where $\mathcal{D}= \{D_1, D_2,\ldots, D_l\}$ is a set-partition 
of a subset of $[k]$.  For $j\in[n]$, let $C_{j}=\{m\in[k]\mid i_m=j\}$. 
Note that some $C_{j}$ might be empty.
Then
$$\{C_1,C_2,\dots,C_n,D_1, D_2,\ldots, D_l\} $$
forms a set-partition of $[k]$, after removing the empty sets. We denote by
$\Bar{d}$ the corresponding symmetric set-partition diagram
which is given by
$$\Bar{d}=\{C_1\cup C_1',\ldots, C_n\cup C_n', D_1, D_1', D_2, D_2', \ldots, D_l, D_l'\}.$$ 
Note that the empty sets here 
should be removed (or ignored), if any. We label each $C_j\neq \varnothing$ 
by the basis vector $e_j$ of $\mathbb{C}^n$. Each part of $\mathcal{D}$ is 
labeled by the basis vector $e_0$ of $\mathbb{C}_{\mathrm{triv}}$.

\begin{example}\label{eq:vec}
For $n=4$, $k=7$ and the basis vector  
$(e_2\otimes e_2\otimes e_2\otimes e_0\otimes e_0\otimes 
e_3\otimes e_0, \{\{4,5\}, \{7\}\})$, 
we have the corresponding set-partition
$$\Bar{d}:=\{\{1,2,3\}\cup\{1',2',3'\},\{6\}\cup\{6'\}, 
\{4,5\}, \{4',5'\}, \{7\},\{7'\}\}. $$
\end{example}

Now we are ready to give our main definition.
For any $\delta\in \C$, a partition diagram 
$d\in \mathcal{P}_k(\delta)$ and a vector $(v_{i_1}\otimes v_{i_2} \otimes \cdots\otimes v_{i_k},\mathcal{D})$ of the form as in \eqref{eq:1},
we set:
\begin{equation}\label{eq:act}
(v_{i_1}\otimes v_{i_2} \otimes \cdots\otimes v_{i_k},\mathcal{D})\cdot d= 
\begin{cases}
    \delta^m(v_{\Bar{d}\circ d},\mathcal{A}), & \text{  if the top set-partition  of $d$   }\\
    & \text{ is finer than} \{B_1,\dots,B_n,D_1,\dots, D_l\}\\
    & \text{ and } \mathrm{rank}(\Bar{d})=\mathrm{rank}(\Bar{d}\circ d);\\
    \\
0, & \text{ otherwise}.
\end{cases}
\end{equation}
Here we have:
\begin{itemize}
\item The number $m\in\mathbb{Z}_{\geq 0}$ denotes the number of components removed from the middle in the vertical concatenation $\Bar{d}\circ d$.
\item Let $\{P_1,\ldots, P_s, Q_1, Q_2,\ldots, Q_t\}$ be the bottom set-partition of $\Bar{d}\circ d$ in which each $P_i$ is connected, by $d$, to 
some unique $C_{a_i}$ (due to our refinement condition), where $1\leq i\leq s$, 
and, moreover, each $Q_1,\ldots, Q_t$ is not connected, by $d$, to any
of the $C_j$'s (this $Q_p$ can be either a bottom part of $d$ or 
connected to exactly one of the $D_q$'s). Then we define
$$ \mathcal{A}:=  \{Q_1, Q_2,\ldots, Q_t\},\quad\text{ and }\quad 
\quad\quad v_{\Bar{d}\circ d} := e_{b_1}\otimes e_{b_2}\otimes\cdots\otimes e_{b_k}, $$
where, for $1\leq j\leq k$,
we have 
$b_j= 0$ provided that $j\in Q_{1}\cup Q_2\cup\cdots\cup Q_t$ and 
$b_j= a_i$ provided that $ j\in P_{i}$.
\end{itemize}

\begin{example}
For the element in Example~\ref{eq:vec} and the partition diagram 
$d$ as in Figure~\ref{figfig7}, we have
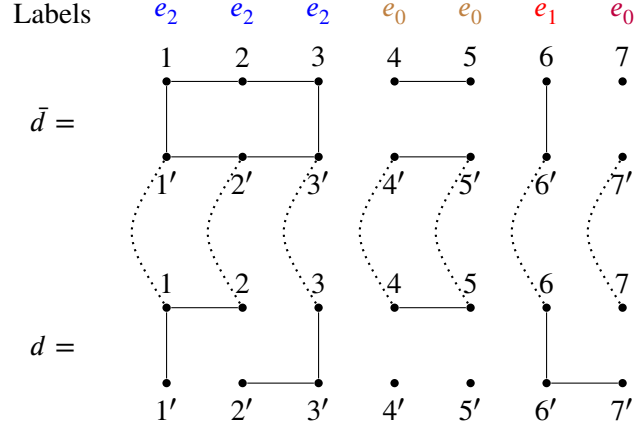
\begin{figure}
  \begin{tikzpicture}[scale=1,mycirc/.style={circle,fill=black, minimum size=0.1mm, inner sep = 1.1pt}]
\node at (-1.5,0.5) {$\Bar{d} = $};
\node at (-1.5,1.9) {Labels};
\node at (0,1.9) {\color{blue}{$e_2$}};
\node at (1,1.9) {\color{blue}{$e_2$}};
\node at (2,1.9) {\color{blue}{$e_2$}};
\node at (3,1.9) {\color{brown}{$e_0$}};
\node at (4,1.9) {\color{brown}{$e_0$}};
\node at (5,1.9) {\color{red}{$e_1$}};
\node at (6,1.9) {\color{purple}{$e_0$}};
\node[mycirc,label=above:{$1$}] (n1) at (0,1) {};
\node[mycirc,label=above:{$2$}] (n2) at (1,1) {};
\node[mycirc,label=above:{$3$}] (n3) at (2,1) {};
\node[mycirc,label=above:{$4$}] (n4) at (3,1) {};
\node[mycirc,label=above:{$5$}] (n5) at (4,1) {};
\node[mycirc,label=above:{$6$}] (n6) at (5,1) {};
\node[mycirc,label=above:{$7$}] (n7) at (6,1) {};

\node[mycirc,label=below:{$1'$}] (n1') at (0,0) {};
\node[mycirc,label=below:{$2'$}] (n2') at (1,0) {};
\node[mycirc,label=below:{$3'$}] (n3') at (2,0) {};
\node[mycirc,label=below:{$4'$}] (n4') at (3,0) {};
\node[mycirc,label=below:{$5'$}] (n5') at (4,0) {};
\node[mycirc,label=below:{$6'$}] (n6') at (5,0) {};
\node[mycirc,label=below:{$7'$}] (n7') at (6,0) {};

\path[-, draw](n1) to (n2);
\path[-, draw](n2) to (n3);
\path[-,draw](n1') to (n2'); 
\path[-,draw](n2') to (n3'); 
\path[-,draw] (n1) to (n1');
\path[-,draw] (n3) to (n3');

\path[-, draw](n4) to (n5);
\path[-,draw](n4') to (n5');  
\path[-,draw](n6) to (n6');

\node at (-1.5,-2.5) {$d = $};
\node[mycirc,label=above:{$1$}] (m1) at (0,-2) {};
\node[mycirc,label=above:{$2$}] (m2) at (1,-2) {};
\node[mycirc,label=above:{$3$}] (m3) at (2,-2) {};
\node[mycirc,label=above:{$4$}] (m4) at (3,-2) {};
\node[mycirc,label=above:{$5$}] (m5) at (4,-2) {};
\node[mycirc,label=above:{$6$}] (m6) at (5,-2) {};
\node[mycirc,label=above:{$7$}] (m7) at (6,-2) {};
\path[dotted,draw, thick] (n1')..controls(-0.6,-1)..(m1);
\path[dotted,draw, thick] (n2')..controls(0.4,-1)..(m2);
\path[dotted,draw, thick] (n3')..controls(1.4,-1)..(m3);
\path[dotted,draw, thick] (n4')..controls(2.4,-1)..(m4);
\path[dotted,draw, thick] (n5')..controls(3.4,-1)..(m5);
\path[dotted,draw, thick] (n6')..controls(4.4,-1)..(m6);
\path[dotted,draw, thick] (n7')..controls(5.4,-1)..(m7);

\node[mycirc,label=below:{$1'$}] (m1') at (0,-3) {};
\node[mycirc,label=below:{$2'$}] (m2') at (1,-3) {};
\node[mycirc,label=below:{$3'$}] (m3') at (2,-3) {};
\node[mycirc,label=below:{$4'$}] (m4') at (3,-3) {};
\node[mycirc,label=below:{$5'$}] (m5') at (4,-3) {};
\node[mycirc,label=below:{$6'$}] (m6') at (5,-3) {};
\node[mycirc,label=below:{$7'$}] (m7') at (6,-3) {};

\path[-,draw] (m6') to (m7');
\path[-, draw](m1) to (m2);
\path[-,draw](m1) to (m1'); 
\path[-,draw](m2') to (m3'); 
\path[-,draw] (m3) to (m3');
\path[-,draw] (m4) to (m5);
\path[-,draw](m6) to (m6');                
	\end{tikzpicture}
	\label{figfig7}
	\caption{Action of $\mathcal{P}_k(\delta)$ on basis vectors}
\end{figure} 
$m=1$, $\mathcal{A}=\{\{4\}, \{5\}\}$, and 
$v_{\Bar{d}\circ d}= e_2\otimes e_2\otimes 
e_2\otimes e_0\otimes e_0\otimes e_1\otimes e_1.$ 
\end{example} 

\begin{proposition}\label{prop-34}
The formula \eqref{eq:act} defines a right action of 
$\mathcal{P}_k(\delta)$ on $W_{k,n}$. This action 
commutes with the action of $R_n$.
\end{proposition}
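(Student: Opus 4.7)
The plan is to verify the three constituent facts: well-definedness of the map $v \mapsto v \cdot d$, the multiplicative property $(v \cdot d_1) \cdot d_2 = v \cdot (d_1 d_2)$, and commutativity with the $R_n$-action. I will encode each basis vector $v = (v_{i_1}\otimes\cdots\otimes v_{i_k},\mathcal{D})$ by its symmetric partition diagram $\bar{d}_v$ together with the labels $e_0,e_1,\dots,e_n$ attached to its top (and mirrored bottom) parts, so that each $C_j$ part carries the label $e_j$ and each $D_l$ part carries the label $e_0$. The refinement condition in~\eqref{eq:act} says that $d$ does not merge two distinct top blocks of $\bar{d}_v$ through its top set-partition, while the rank condition $\mathrm{rank}(\bar{d}_v)=\mathrm{rank}(\bar{d}_v\circ d)$ says that each non-empty $C_j\cup C_j'$ remains propagating after concatenation with $d$. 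Together, these ensure that the bottom set-partition $\{P_1,\dots,P_s,Q_1,\dots,Q_t\}$ of $\bar{d}_v\circ d$ admits the labeling scheme prescribed in~\eqref{eq:act}, so the output is again a basis vector of the form~\eqref{eq:1} scaled by $\delta^m$. The identity of $\mathcal{P}_k(\delta)$, namely the diagram with parts $\{j,j'\}$ for $j=1,\dots,k$, trivially satisfies these conditions with $m=0$ and returns $v$.

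The heart of the proof is multiplicativity, which I would derive from the associativity of vertical concatenation. Writing $d_1 d_2 = \delta^c(d_1\circ d_2)$ in $\mathcal{P}_k(\delta)$, the identity to prove becomes $(v\cdot d_1)\cdot d_2 = \delta^c\, v\cdot(d_1\circ d_2)$. Both sides are governed by the three-level vertical concatenation $\bar{d}_v\circ d_1\circ d_2$; by associativity, this three-level diagram has the same connected components whether one first forms $\bar{d}_v\circ d_1$ and then concatenates with $d_2$, or first forms $d_1\circ d_2$ and then concatenates $\bar{d}_v$ on top. Consequently, the resulting basis vectors on the two sides agree, and the scalar comparison reduces to the loop-counting identity $m_1+m_2 = c+m_{12}$, where $m_1,m_2$ are the numbers of middle components removed at the two successive stages on the left-hand side and $m_{12}$ is the number removed at the single stage $v\cdot(d_1\circ d_2)$; every floating component of the three-level concatenation is counted exactly once in each of these two partitions. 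In parallel, I would verify that the refinement and rank conditions on both factors on the left hold if and only if the corresponding conditions for $d_1\circ d_2$ relative to $v$ hold, so that the zero cases match across the equation. This matching is the most delicate point: a failure of the refinement condition for $d_2$ on $v\cdot d_1$ must either be traceable to a failure of the refinement or rank condition for $d_1\circ d_2$ on $v$, and similarly in the reverse direction.

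Finally, commutativity with $R_n$ is a separation-of-variables argument. For $\sigma\in R_n$, the action on $v$ is diagonal on the labels: it sends $e_l$ to $e_{\sigma(l)}$ for $l\geq 1$ (possibly annihilating some), while fixing $e_0$ and the set-partition $\mathcal{D}$. The underlying diagram $\bar{d}_v$ is unaffected by $\sigma$ whenever $\sigma\cdot v\neq 0$; only the attached labels are permuted, so the refinement and rank conditions for $d$ are identical for $v$ and for $\sigma\cdot v$. Because the prescription in~\eqref{eq:act} determines the labels $b_j$ of the output only through the combinatorial correspondence $P_i\leftrightarrow C_{a_i}$ read off from $\bar{d}_v\circ d$, applying $\sigma$ before or after $d$ yields the same final labels $e_{\sigma(a_i)}$. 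Moreover, the rank condition forces every non-empty $C_j$ of $\bar{d}_v$ to appear among the $C_{a_i}$'s, so $v\cdot d$ and $v$ involve exactly the same set of non-zero labels; therefore $\sigma$ kills $v\cdot d$ if and only if it kills $v$, and the zero cases match.
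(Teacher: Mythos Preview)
Your outline follows the paper's own argument closely: reduce multiplicativity to associativity of three-fold concatenation for the nonzero terms, and then verify that the two sides vanish simultaneously. The delicate point you flag is exactly where both your sketch and the paper's proof are incomplete. You assert that the refinement and rank conditions for $d_1$ (relative to $v$) and for $d_2$ (relative to $v\cdot d_1$) hold jointly if and only if those for $d_1\circ d_2$ (relative to $v$) hold. The paper's Subcases~1 and~2 establish only the forward implication; the reverse fails.

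Take $k=2$, $v=(e_0\otimes e_0,\{\{1,2\}\})$, $d_1=\{\{1,1'\},\{2\},\{2'\}\}$, and $d_2=\{\{1,2\},\{1',2'\}\}$. Here $\bar d_v=\{\{1,2\},\{1',2'\}\}$ has rank~$0$; the refinement and rank conditions hold for $d_1$, and one computes $v\cdot d_1=(e_0\otimes e_0,\{\{1\},\{2\}\})$, since in $\bar d_v\circ d_1$ the block $D_1'=\{1',2'\}$ feeds only into the bottom block $\{1'\}$ while $\{2'\}$ is an isolated bottom block of $d_1$, giving $\mathcal A=\{\{1\},\{2\}\}$. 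Now $\mathrm{top}(d_2)=\{\{1,2\}\}$ is not finer than $\{\{1\},\{2\}\}$, so $(v\cdot d_1)\cdot d_2=0$. On the other hand $d_1 d_2=d_1\circ d_2=\{\{1\},\{2\},\{1',2'\}\}$, whose top $\{\{1\},\{2\}\}$ \emph{is} finer than $\{\{1,2\}\}$, and one finds $v\cdot(d_1 d_2)=\delta\,v\neq 0$. The mechanism is that the rule in~\eqref{eq:act} allows a single $D_\ell$ to split into several independent $Q_p$'s in the output $\mathcal A$; a subsequent $d_2$ can then be blocked by this strictly finer $\mathcal A$ even though it is compatible with the original $\{C_j,D_\ell\}$. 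So either the definition of $\mathcal A$ in~\eqref{eq:act} needs amendment (for instance, dropping the $D_\ell$'s from the refinement test, or recording which $Q_p$'s arose from a common $D_\ell$), or an extra argument is needed that neither your sketch nor the paper supplies.
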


\begin{proof}
Due to the diagrammatic nature of our definition, 
to prove that \eqref{eq:act} indeed defines a right action of 
$\mathcal{P}_k(\delta)$ on $W_{k,n}$, it is enough to 
check that 
$$ (v_{i_1}\otimes v_{i_2}\otimes \cdots \otimes v_{i_k},\mathcal{D})\cdot (d_1 d_2) = 0 \Longleftrightarrow ((v_{i_1}\otimes v_{i_2}\otimes \cdots \otimes v_{i_k},\mathcal{D})\cdot d_1)\cdot d_2 = 0,$$
for any pair $d_1$ and $d_2$ of partition diagrams.
If we assume that  $\mathrm{rank}(\Bar{d})>\mathrm{rank}(\Bar{d}\circ d_1\circ d_2)$,
then both left hand sides output $0$ derectly by \eqref{eq:act} 
and everything is fine. Therefore it remains
to consider the case when $\mathrm{rank}(\Bar{d})=\mathrm{rank}(\Bar{d}\circ d_1)
=\mathrm{rank}(\Bar{d}\circ d_1\circ d_2)$.
This splits into the following two subcases.

\textbf{Subcase 1:} Assume that the top set-partition of $d_1$ is not finer than 
$\{B_1,\dots,B_n,D_1,\dots, D_l\}$. Then the top-set-partition 
of $d_1\circ d_2$ will also not be finer than 
$\{B_1,\dots,B_n,D_1,\dots, D_l\}$. So, in this case we have $0$ on both sides,
so everything is fine.

\textbf{Subcase 2:} Suppose now that the top set-partition of 
$d_1\circ d_2$ is not finer than 
$\{B_1,\dots,B_n,D_1,\dots, D_l\}$ but the top-set 
partition of $d_1$ is finer than  
$\{B_1,\dots,B_n,D_1,\dots, D_l\}$. Then there exists a
part $C$ of $\mathrm{top}(d_1\circ d_2)$ which intersect 
at least two parts of $\{B_1,\dots,B_n,D_1,\dots, D_l\}$. 
Further, this part $C$ 
must be contained in a part of $d_1\circ d_2$ obtained by combining at least two distinct bottom parts of $d_1$, all intersecting a top part of $d_2$ in the concatenation $d_1\circ d_2$. 
This is equivalent to saying that $\mathrm{top}(d_2)$ is not 
finer than $\mathrm{bottom}(\Bar{d}\circ d_1)$, which is also 
the set-partition obtained from the vector 
$$(v_{i_1}\otimes v_{i_2}\otimes \cdots \otimes v_{i_k},\mathcal{D})\cdot d_1.$$ 
This completes the proof in this subcase.

It follows directly from the definitions that the action defined by
\eqref{eq:act} commutes with the action of $R_n$ on $W_{k,n}$.
This completes the proof.
\end{proof}

From Proposition~\ref{prop-34}, it follows that $W_{k,n}$ is an 
$R_n$-$P_k(\delta)$-bimodule. Our next essential step in the proof
of the main Theorem~\ref{thm:key} is computation of the 
bitrace of the bimodule $W_{k,n}$.

\subsection{Set partitions stabilized by permutations}\label{newsec-composition}

For a positive integer $m$, consider the natural action of 
the symmetric group $S_m$ on the set $\mathcal{SP}_m$
of all set partitions of $[m]$.

Let $\mu=(\mu_1,\mu_2,\ldots,\mu_l)$ be
a composition of $m$. In particular, $0<\mu_i$, for all $i$,
and $\mu_1+\mu_2+\dots+\mu_l=m$. For $i=1,2,\dots,l$,
we denote by $a_i$ the number $\mu_1+\mu_2+\dots+\mu_{i-1}$,
where $a_1=0$.
Let $\sigma_\mu$ be the 
element of $S_m$ defined as the following product of cycles:
\begin{displaymath}
\sigma_\mu:=(1,2,\dots,\mu_1)(\mu_1+1,\mu_1+2,\dots,\mu_1+\mu_2)
\dots (m-\mu_l+1,m-\mu_l+2,\dots,m).
\end{displaymath}
For this $\sigma_\mu$, we have the set $\mathcal{SP}_m^\mu$
of all elements in $\mathcal{SP}_m$ that are fixed by
$\sigma_\mu$. 

The set $\mathcal{SP}_m^\mu$ can be described fairly explicitly,
but not in a very simple way. Let $\xi$ be a set partition
of $[l]$. Let $f$ be a function from the parts of $\xi$
to positive integers that satisfies the following property:
given a part $P$ of $\xi$, the value $f(P)$ divides
$\mu_i$, for all $i\in P$. We denote the set of all such
functions by $Q(\mu,\xi)$. 

For each part $P$ of 
$\xi$ and each positive divisor $d$
of all $\mu_i$, where $i\in P$, denote by
$R(P,d)$ the set of all functions $g$ that assigns to
an elements $i\in P$  an integer value $1\leq g(i)\leq d$.
We additionally assume that $g(i_0)=1$ for
the minimum element $i_0\in P$.

Assume now that we have fixed the following datum:
$(\xi,f,\{g_P\})$, where $\xi$ is a set partition of $[l]$,
$f\in Q(\mu,\xi)$ and $g_P\in R(P,f(P))$, for every part
$P$ of $\xi$. To this datum we associate the
set partition $\eta(\xi,f,\{g_P\})$ of $[m]$ 
constructed as follows. For a part $P$ of $\xi$, set
\begin{displaymath}
N_P:=\bigcup_{i\in P} 
\{a_i+g_P(i),a_i+g_P(i)+d,a_i+g_P(i)+2d,\dots,a_i+g_P(i)+\mu_i-d\}.
\end{displaymath}
The partition $\eta(\xi,f,\{g_P\})$ consists exactly of all parts of
the form $\sigma_\mu^r(N_P)$, where $P$ is a part of $\xi$ and  $r\geq 1$.
Note that, by construction, we have $\eta(\xi,f,\{g_P\})\in \mathcal{SP}_m^\mu$.

\begin{lemma}\label{lem-n-part}
Each element in $\mathcal{SP}_m^\mu$ can be uniquely written
as $\eta(\xi,f,\{g_P\})$, for some $\xi,f$ and $\{g_p\}$ as above.
\end{lemma}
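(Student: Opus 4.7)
The plan is to prove the lemma by explicitly inverting the map $(\xi, f, \{g_P\}) \mapsto \eta(\xi, f, \{g_P\})$: I canonically recover the datum from any $\pi \in \mathcal{SP}_m^\mu$. Uniqueness is then immediate, since the recovery procedure depends only on $\pi$, and existence reduces to checking that applying $\eta$ to the recovered datum returns $\pi$.

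First, I extract $\xi$ and $f$. Since $\pi$ is $\sigma_\mu$-invariant, $\sigma_\mu$ permutes its parts, and I group them into $\sigma_\mu$-orbits $\mathcal{O}_1, \dots, \mathcal{O}_s$. The union $\bigsqcup \mathcal{O}_j$ is a $\sigma_\mu$-invariant subset of $[m]$, hence a union of $\sigma_\mu$-orbits on $[m]$; these element-orbits are exactly the blocks $B_i := \{a_i+1, \dots, a_i+\mu_i\}$, so $\bigsqcup \mathcal{O}_j = \bigsqcup_{i \in P_j} B_i$ for a unique $P_j \subseteq [l]$, and $\xi := \{P_1, \dots, P_s\}$ is a set partition of $[l]$. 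Set $f(P_j) := |\mathcal{O}_j|$. To verify $f(P_j) \mid \mu_i$ for each $i \in P_j$, note that the restrictions $\{N \cap B_i : N \in \mathcal{O}_j\}$ form a $\sigma_\mu|_{B_i}$-orbit; disjointness of the parts of $\pi$ forces distinct parts to have disjoint restrictions, so all $|\mathcal{O}_j|$ restrictions are distinct and nonempty, partitioning $B_i$ into equal pieces of size $\mu_i/|\mathcal{O}_j|$. Hence $f \in Q(\mu, \xi)$.

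Next I extract each $g_{P_j}$. Writing $d := f(P_j)$, each nonempty restriction $N \cap B_i$ has size $\mu_i/d$ and is fixed by the order-$\mu_i/d$ subgroup of the cyclic group $\langle \sigma_\mu|_{B_i} \rangle$, so it is a single coset of that subgroup, namely an arithmetic progression $\{a_i + c, a_i + c + d, \dots, a_i + c + \mu_i - d\}$ for some $c \in \{1, \dots, d\}$. Let $i_0 := \min P_j$ and let $N^*$ be the unique element of $\mathcal{O}_j$ whose restriction to $B_{i_0}$ contains $a_{i_0} + 1$. Define $g_{P_j}(i)$ to be the offset $c$ of the progression $N^* \cap B_i$. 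By construction $g_{P_j}(i_0) = 1$, so $g_{P_j} \in R(P_j, d)$. Direct comparison with the formula for $N_P$ yields $N_{P_j} = N^*$, and the $\sigma_\mu$-shifts $\{\sigma_\mu^r(N^*) : r = 0, \dots, d-1\}$ recover all of $\mathcal{O}_j$; taking the union over $j$ gives $\eta(\xi, f, \{g_P\}) = \pi$.

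For uniqueness, any alternative datum producing $\pi$ must yield the same orbit decomposition (hence the same $\xi$ and the same $f$), and the normalization $g_P(\min P) = 1$ pins down the canonical representative $N^*$ in each orbit, thereby determining each $g_{P_j}$. The main subtlety is the arithmetic-progression structure invoked in Step 3: one must combine the disjointness of parts of $\pi$ with the cyclic action of $\sigma_\mu$ on $B_i$ to rule out the possibility that a restriction is a disjoint union of several cosets of a smaller subgroup; the size identity $|N \cap B_i| = \mu_i/d$ coming from Step 2 is precisely what forces each restriction to be a single coset.
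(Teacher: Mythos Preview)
Your proof is correct and follows essentially the same approach as the paper: you explicitly invert the map $(\xi,f,\{g_P\})\mapsto\eta(\xi,f,\{g_P\})$ by reading off the datum from the $\sigma_\mu$-orbit structure on the parts of a given $\pi\in\mathcal{SP}_m^\mu$. The paper presents this inversion as a brief inductive recipe (take the part containing $1$, extract the first block of $\xi$ and the values of $f,g$ on it, then repeat), whereas you work globally with all $\sigma_\mu$-orbits at once and are considerably more careful about verifying the divisibility $f(P)\mid\mu_i$ and the arithmetic-progression structure of the restrictions $N\cap B_i$; these verifications are only implicit in the paper's sketch.
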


Define 
$Q_1:=\{1,2,\dots,\mu_1\}$, then
$Q_2:=\{\mu_1+1,\mu_1+2,\dots,\mu_1+\mu_2\}$ and so on
all the way up to $Q_l:=\{\mu_1+\dots+\mu_{l-1}+1,\dots,m\}$.

\begin{proof}
Let $\eta\in \mathcal{SP}_m^\mu$ and let $K$ be the part of 
$\eta$ that contains $1$. Define the part $P$ of $\xi$ containing
$1$ as the collection of all $j$ for which $K\cap Q_j\neq\varnothing$.
Define $f(P)$ as the number of elements in the set
$\{K,\sigma_\mu(K),\sigma_\mu^2(K),\dots\}$.
For each $j\in P$, define $g_P(j)$ as the minimal element in $P\cap Q_j$.

If $P\neq [l]$, take the minimal element $i_0\in [l]\setminus P$.
Let $K_1$ be the part of $\eta$ that contains 
$\mu_1+\dots+\mu_{i_0-1}+1$. Define the part $P_1$ of $\xi$ containing
$i_0$ as the collection of all $j$ for which $K_1\cap Q_j\neq\varnothing$,
and so on. Proceeding inductively, defines $\eta$, $f$ and all $g_P$
that give $\eta=\eta(\xi,f,\{g_P\})$. Uniqueness follows directly from the
construction.
\end{proof}

As an example, consider $m=6$ and $\mu=(2,4)$. 
Then $a_1=0$ and $a_2=1$. Let $\xi=\{1,2\}$
and $f(\{1,2\})=2$. Note that $2\vert 2$ and $2\vert 4$.
Take as $g_{\{1,2\}}$ the function that sends both $1$ and $2$ to $1$.
Then the corresponding $\eta(\xi,f,\{g_P\})$ equals
$\{\{1,3,5\},\{2,4,6\}\}$. Here $N_{\{1,2\}}=\{1,3,5\}$.
If we take as $g_{\{1,2\}}$ the function that sends $2$ to $2$,
Then the corresponding $\eta(\xi,f,\{g_P\})$ equals
$\{\{1,4,6\},\{2,3,5\}\}$. Here $N_{\{1,2\}}=\{1,4,6\}$.

\subsection{Dual symmetric inverse monoid algebras
as subalgebras of partition algebras}\label{sec:swd}

Let $X$ and $Y$ be two disjoint subsets of $[k]$.
Set $Z:=[k]\setminus(X\cup Y)$.
Let $\xi$ be a set partition of $X$.
Let $Y=\{y_1,y_2,\dots,y_r\}$ and let
$\xi$ have parts $A_1,A_2,\dots,A_s$.
Denote by $\mathcal{I}(X,Y,\xi)$ the subspace of 
$\mathcal{P}_k(\delta)$ spanned by all partition 
diagrams  of the form
\begin{displaymath}
\{A_1\cup A'_1,\dots, A_s\cup A'_s,
\{y_1\},\dots,\{y_r\},\{y'_1\},\dots,\{y'_r\}, B\}, 
\end{displaymath}
where $B$ is an arbitrary set partition of 
$Z\cup Z'$ 
in which each part is propagating.

\begin{lemma}\label{lem-dualinpartition}
The subspace $\mathcal{I}(X,Y,\xi)$ is a subalgebra of 
$\mathcal{P}_k(\delta)$. Moreover,
if $\delta\neq 0$, then sending a partition diagram $\xi$
of the above form to $\frac{1}{\delta^{|Y|}}$ times the restriction
of $\xi$ to $Z\cup Z'$ defines an isomorphism between
$\mathcal{I}(X,Y,\xi)$ and the monoid algebra of $I_Z^*$.
\end{lemma}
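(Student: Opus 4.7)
The plan is to describe the multiplication inside $\mathcal{I}(X,Y,\xi)$ explicitly and show that it agrees with the multiplication in $\mathbb{C}[I_Z^*]$ up to a uniform power of $\delta$, which can then be absorbed by a rescaling of basis elements. Concretely, I would fix two basis diagrams $d_1,d_2$ of $\mathcal{I}(X,Y,\xi)$ indexed by propagating set partitions $B_1,B_2$ of $Z\cup Z'$, and analyse the product $d_1 d_2=\delta^c(d_1\circ d_2)$ by splitting the vertical concatenation into three mutually disjoint regions: the vertex-columns indexed by $X$, by $Y$, and by $Z$.

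On the $X$-columns, the part $A_i\cup A_i'$ of $d_1$ meets the identically labelled part $A_i\cup A_i'$ of $d_2$; the middle copy $A_i'$ of $d_1$ is identified with the top copy $A_i$ of $d_2$, and the result is a single propagating part of the same form $A_i\cup A_i'$, contributing nothing to $c$. On the $Z$-columns, both $B_1$ and $B_2$ consist entirely of propagating parts; the argument already used in Section~\ref{sec:dual} to make $\mathbb{C}[\mathcal{I}_k^*]$ a $\delta$-independent subalgebra of $\mathcal{P}_k(\delta)$ shows that $B_1\circ B_2$ remains all-propagating and again contributes no middle components. Finally, for each $y\in Y$, the singletons $\{y'\}$ from the bottom of $d_1$ and $\{y\}$ from the top of $d_2$ glue into a single isolated middle vertex, which is precisely a connected component sitting entirely in the middle row; these $|Y|$ isolated vertices account for all removed middle components. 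Collecting the three contributions yields the key identity
\[
d_1\,d_2 \;=\; \delta^{|Y|}\, d^{(B_1\circ B_2)},
\]
where $B_1\circ B_2$ is the product of $B_1$ and $B_2$ in $I_Z^*$. This already proves closure under multiplication and hence the subalgebra claim.

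For the isomorphism statement, I would define $\Phi$ on basis diagrams by the rule of the statement, extend linearly, and verify multiplicativity directly from the identity above. The map is a linear bijection as soon as $\delta\neq 0$, since it sends each basis diagram of $\mathcal{I}(X,Y,\xi)$ to a nonzero scalar multiple of the corresponding basis element of $\mathbb{C}[I_Z^*]$. The scalar $c_Y$ appearing in the rescaling $d^{(B)}\mapsto c_Y\,B$ is pinned down by the compatibility condition $c_Y^2=\delta^{|Y|}c_Y$ arising from comparing $\Phi(d_1 d_2)$ with $\Phi(d_1)\Phi(d_2)$; this uniquely determines the power of $\delta$ needed to turn the twisted product $d^{(B_1)}d^{(B_2)}=\delta^{|Y|}d^{(B_1\circ B_2)}$ into the untwisted product of $\mathbb{C}[I_Z^*]$.

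The main obstacle I anticipate is the region-by-region bookkeeping of middle components: any unexpected isolated middle block coming from the $X$- or $Z$-columns would introduce a spurious power of $\delta$ and break the displayed identity. The key input there is the all-propagating property of $B_1\circ B_2$, which is the same phenomenon that makes $\mathbb{C}[\mathcal{I}_k^*]$ a subalgebra of $\mathcal{P}_k(\delta)$ for every $\delta$; once this is in hand, closure and the isomorphism both follow mechanically from the single identity $d_1 d_2 = \delta^{|Y|}d^{(B_1\circ B_2)}$.
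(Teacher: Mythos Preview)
Your proposal is correct and follows the same line as the paper's proof, which is essentially a terse version of your region-by-region bookkeeping: closure is immediate from the shape of the diagrams, and the uniform factor $\delta^{|Y|}$ coming from the $Y$-columns is absorbed by a rescaling. One remark: your compatibility condition $c_Y^2=\delta^{|Y|}c_Y$ actually forces $c_Y=\delta^{|Y|}$, not $\tfrac{1}{\delta^{|Y|}}$ as printed in the lemma; this is a typo in the statement (equivalently, the inverse map $B\mapsto \tfrac{1}{\delta^{|Y|}}d^{(B)}$ is the algebra homomorphism), and your computation correctly pins down the right scalar.
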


\begin{proof}
It is clear from the definitions that vertical concatenation
of partition diagrams in $\mathcal{I}(X,Y,\xi)$ gives a 
partition diagram in $\mathcal{I}(X,Y,\xi)$. This implies
the first claim.

It is also clear from the definitions that concatenation
of partition diagrams in $\mathcal{I}(X,Y,\xi)$
results in the factor $\delta^{|Y|}$ for multiplication
inside $\mathcal{P}_k(\delta)$. Therefore the rescaling 
by $\frac{1}{\delta^{|Y|}}$ will compensate for this 
additional factor. This implies that, after this rescaling,
the product of diagrams inside $\mathcal{P}_k(\delta)$
agrees with the multiplication in the monoid algebra of 
$I_Z^*$. This implies the second statement.
\end{proof}

Set $t=k-|X|-|Y|$. Let $V_{t,\xi}$ be a subspace of $W_{k,n}$ 
spanned by all vectors of the form 
$$(v_{i_1}\otimes \cdots\otimes v_{i_k}, \{\xi,\{y_1\},\dots,\{y_r\}\}),$$
where $\xi$ is fixed.
The space $V_{t,\xi}$ inherits an action of the 
subalgebra $\mathcal{I}(X,Y,\xi)$ from the 
action of the partition algebra $\mathcal{P}_k(\delta)$ 
on $W_{k,n}$. Clearly, $V_t$ is also an  $R_n$-module. 

Note that, when $X\cup Y$ is empty, 
the algebra $\mathcal{I}(X,Y,\xi)$ is 
the monoid algebra $\mathbb{C}[\mathcal{I}_k^*]$ of the dual symmetric 
inverse monoid $\mathcal{I}_k^*$. 

\begin{proposition}\label{prop:mild} 
If $\delta\neq 0$, then the above map
$\phi_{(X,Y,\xi)}: \mathcal{I}(X,Y,\xi) \to \End_{R_n}(V_{t,\xi})$ is surjective.
\end{proposition}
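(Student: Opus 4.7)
The plan is to reduce the statement to the Schur--Weyl duality for the pair $(R_n,\mathcal{I}_t^*)$ recalled in the previous subsection, together with the double centralizer theorem. The key observation is that $V_{t,\xi}$ is, as an $R_n$-module, just a copy of $(\mathbb{C}^n)^{\otimes t}$, and that the action of $\mathcal{I}(X,Y,\xi)$ on it matches, via Lemma~\ref{lem-dualinpartition}, the natural action of $\mathbb{C}[\mathcal{I}_t^*]$ on $(\mathbb{C}^n)^{\otimes t}$ coming from the partition algebra.

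First I describe $V_{t,\xi}$ as an $R_n$-module. A basis vector of $V_{t,\xi}$ has the form $(v_{i_1}\otimes\cdots\otimes v_{i_k},\{\xi,\{y_1\},\dots,\{y_r\}\})$, which forces $v_{i_j}=e_0$ for $j\in X\cup Y$ while leaving $v_{i_j}\in\{e_1,\dots,e_n\}$ free for $j\in Z$. Since $R_n$ acts diagonally and fixes $e_0$, remembering only the tuple $(v_{i_j})_{j\in Z}$ defines an $R_n$-module isomorphism $V_{t,\xi}\cong (\mathbb{C}^n)^{\otimes Z}\cong (\mathbb{C}^n)^{\otimes t}$.

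Second, I claim that, under this identification together with the isomorphism $\mathcal{I}(X,Y,\xi)\cong \mathbb{C}[\mathcal{I}_t^*]$ from Lemma~\ref{lem-dualinpartition} (this is where $\delta\neq 0$ is used), the right action of $\mathcal{I}(X,Y,\xi)$ on $V_{t,\xi}$ prescribed by \eqref{eq:act} becomes the standard right action of $\mathbb{C}[\mathcal{I}_t^*]$ on $(\mathbb{C}^n)^{\otimes t}$ inherited from $\mathcal{P}_t(\delta)$, as recalled in Section~\ref{sec:phi}. To verify this, one computes $\bar{d}\circ d$ for $d\in\mathcal{I}(X,Y,\xi)$ and observes: the $\xi$-parts $A_i$ on top of $\bar{d}$ combine with the propagating parts $A_i\cup A_i'$ of $d$ without producing any extra middle components; the singletons on $Y$ produce exactly $|Y|$ isolated middle components, contributing the factor $\delta^{|Y|}$ that is cancelled by the rescaling $1/\delta^{|Y|}$ built into Lemma~\ref{lem-dualinpartition}; and the refinement and rank conditions in \eqref{eq:act} restrict on $Z$ to precisely those conditions that encode the standard action of $\mathcal{I}_Z^*$ on $(\mathbb{C}^n)^{\otimes t}$.

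Third, with this identification in hand, the Schur--Weyl duality between $R_n$ and $\mathcal{I}_t^*$ asserts that $\mathbb{C}[R_n]$ surjects onto $\End_{\mathbb{C}[\mathcal{I}_t^*]}((\mathbb{C}^n)^{\otimes t})$. Since both $\mathbb{C}[R_n]$ and $\mathbb{C}[\mathcal{I}_t^*]$ are semisimple (the latter as the algebra of a finite inverse monoid), the double centralizer theorem yields the reverse surjection $\mathbb{C}[\mathcal{I}_t^*]\twoheadrightarrow \End_{R_n}((\mathbb{C}^n)^{\otimes t})$ as well. Transferring this surjection back through the isomorphisms of Steps~1 and~2 gives the desired surjectivity of $\phi_{(X,Y,\xi)}$. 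I expect the main obstacle to be the combinatorial bookkeeping in Step~2: one has to trace through the vertical concatenation $\bar{d}\circ d$ carefully enough to confirm that the fixed blocks on $X$ and the singletons on $Y$ contribute only the expected $\delta^{|Y|}$ scalar and that the refinement and rank conditions of \eqref{eq:act} localise to the conditions governing the natural $\mathcal{I}_Z^*$-action on the tensor space.
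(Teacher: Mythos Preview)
Your argument is correct and is essentially the paper's own proof spelled out in detail: the paper simply says the result follows by combining Lemma~\ref{lem-dualinpartition} with \cite[Theorem~1]{5}, which is precisely your reduction of $V_{t,\xi}$ to $(\mathbb{C}^n)^{\otimes t}$ together with the known Schur--Weyl duality for $(R_n,\mathcal{I}_t^*)$. Your explicit bookkeeping of the $\delta^{|Y|}$ factor and the verification that the refinement and rank conditions in \eqref{eq:act} localise correctly to $Z$ are exactly the details hidden in the paper's one-line proof.
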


\begin{proof}
This follows by by combining  
Lemma~\ref{lem-dualinpartition} with \cite[Theorem~1]{5}.
\end{proof}

\subsection{Representations of $\mathcal{I}(X,Y,\xi)$}

From Proposition~\ref{prop:mild},
we see that, when $\delta\neq 0$, the irreducible 
representations of $\mathcal{I}(X,Y,\xi)$ are indexed by 
partitions of positive integers that do not exceed $t$. 
Also, these representations can be constructed as
described in Section~\ref{dualreps}. 

Denote by $\mathcal{K}_{(X,Y,\xi)}(t,i)$ and $\mathcal{L}_{(X,Y,\xi)}(t,i)$
the sets of partition diagrams for $\mathcal{I}(X,Y,\xi)$
which correspond to the sets 
$\mathcal{K}(t,i)$ and $\mathcal{L}(t,i)$ from Section~\ref{dualreps}
via the isomorphism given by Proposition~\ref{prop:mild}.
Then 
$$\{d\otimes v_T\mid d\in\mathcal{K}_{(X,Y,\xi)}(t,i), T\in\text{SYT}(\lambda)\}$$
is a basis of the simple $\mathcal{I}(X,Y,\xi)$-module $\mathcal{I}_{(X,Y,\xi)}^\lambda$
corresponding to $\lambda\vdash i\leq t$.

Another immediate consequence of Proposition~\ref{prop:mild} is the 
following multiplicity-free decomposition,  
as  an $R_n$-$\mathcal{I}(X,Y,\xi)$-bimodule:  
\begin{equation}
V_{t,\xi} \cong \bigoplus_{1\leq |\lambda|\leq \min\{n,t\}} 
R_{n}^{\lambda}\otimes \mathcal{I}_{(X,Y,\xi)}^{\lambda}.
\end{equation}

\subsection{Characters of partition algebras in terms of characters of dual symmetric inverse monoid algebras}\label{sub:Cr}

Let $\chi_{\mathcal{I}_{(X,Y,\xi)}^\lambda}$ denote the character of the 
irreducible representation $\mathcal{I}_{(X,Y,\xi)}^{\lambda}$ 
of $\mathcal{I}(X,Y,\xi)$. 
Recall that $\chi_{\mathcal{P}_{k}^\lambda}$ denotes the character 
of the irreducible representation $\mathcal{P}_{k}^\lambda$ of the 
partition algebra $\mathcal{P}_k(\delta)$. 

For $m\leq k$, consider the case when $Y=[k]\setminus [m]$
and $X\subset [m]$. Let $\xi$ be any set-partition of $X$.
For a composition $\mu$ of $m$, we have the element
$d_\mu$ defined in Subsection~\ref{sec:char}. 
Assuming that the restriction of $d_\mu$ to $X$ fixes $\xi$,
we define
the element $d_{\mu}^{X,\xi}$ as follows:
\begin{itemize}
\item $d_{\mu}^{X,\xi}$  coincides with $d_\mu$ outside
$X\cup X'$;
\item on $X\cup X'$, the parts of $d_{\mu}^{X,\xi}$
are exactly the sets of the form $K\cup K'$, where
$K$ is a part of $\xi$.
\end{itemize}
It follows directly from the definition that 
$d_{\mu}^{X,\xi}\in\mathcal{I}(X,Y,\xi)$.

\begin{theorem}\label{thm:cr}
For $\delta\in\mathbb{C}\setminus\{0\}$,
a composition $\mu$ of $m\leq k$, and
$\lambda\vdash j\leq m$, we have
$$ 
\chi_{\mathcal{P}_{k}^\lambda}(d_\mu)=
\sum_{X\subset [m]}
\sum_{\xi\in\mathcal{SP}_X^\mu}
\chi_{\mathcal{I}_{(X,Y,\xi)}^\lambda}(d_\mu^{X,\xi}),$$
where, by convention, we set 
$\chi_{\mathcal{I}_{(X,Y,\xi)}^\lambda}(d_\mu^{X,\xi})=0$ provided that 
$j>t:=m-|X|$. 
\end{theorem}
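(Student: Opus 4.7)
The plan is to compute $\chi_{\mathcal{P}_k^\lambda}(d_\mu)$ directly as a trace on the basis $\{d\otimes v_T : d\in\mathcal{N}(k,j),\ T\in\text{SYT}(\lambda)\}$ of $\mathcal{P}_k^\lambda$, and then to reorganize the resulting sum so that it matches the right-hand side term by term.

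First I would carry out the diagrammatic analysis of $d_\mu$ acting on a basis vector $d\otimes v_T$. The parts of $d_\mu$ are the propagating pairs $\{i,\sigma_\mu(i)'\}$ for $i\in[m]$ together with the isolated singletons $\{i\},\{i'\}$ for $i\in[k]\setminus[m]$, so tracking the connected components of the concatenation $d_\mu\circ d$ shows that $d_\mu\cdot d$ equals $c\cdot (d\cdot\tau)$ in $\mathbb{C}[V(k,j)]$, for some $\tau\in S_j$ and $c\neq 0$, if and only if: (i)~every element of $[k]\setminus[m]$ is a top singleton of $d$; (ii)~the remaining top parts of $d$ form a set partition $\pi$ of $[m]$ fixed by $\sigma_\mu$; and (iii)~$\sigma_\mu$ permutes the $j$ propagating parts of $\pi$ among themselves (if it swaps a propagating part with a non-propagating one, rank is still $j$ but the result lies in a different $\mathcal{N}$-orbit and the diagonal contribution vanishes). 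In the allowed case, $\tau$ is the block permutation induced by $\sigma_\mu$ under the minimum entry order, and $c=\delta^{k-m}$ records the $k-m$ isolated middle components coming from the $E_1$-factors. Summing $\delta^{k-m}\chi^\lambda(\tau_d)$ over all valid $d$ yields $\chi_{\mathcal{P}_k^\lambda}(d_\mu)$.

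Next I would stratify the valid $d$'s by the pair $(X,\xi)$, where $X$ is the union of the non-propagating blocks of $\pi$ and $\xi$ is their set partition of $X$. Conditions (ii)--(iii) translate exactly into $X$ being $\sigma_\mu$-invariant and $\xi\in\mathcal{SP}_X^\mu$; once $(X,\xi)$ is fixed, the remaining data for $d$ is a $\sigma_\mu$-invariant set partition $\pi'$ of $Z:=[m]\setminus X$ with exactly $j$ blocks (becoming the propagating parts of $d$). This gives
\[
\chi_{\mathcal{P}_k^\lambda}(d_\mu)\;=\;\delta^{k-m}\sum_{X\subset[m]}\sum_{\xi\in\mathcal{SP}_X^\mu}\sum_{\pi'}\chi^\lambda(\tau_{\pi'}),
\]
where $\tau_{\pi'}\in S_j$ is the block permutation induced by $\sigma_\mu|_Z$, and the innermost sum is empty (hence zero) precisely when $j>t:=|Z|$. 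To match the right-hand side, I would invoke Lemma~\ref{lem-dualinpartition}: under the isomorphism $\mathcal{I}(X,Y,\xi)\cong\mathbb{C}[\mathcal{I}_Z^*]$ with scaling factor $\delta^{-|Y|}$, the element $d_\mu^{X,\xi}$ corresponds to $\delta^{k-m}$ times the permutation diagram of $\sigma_\mu|_Z$ in $\mathbb{C}[\mathcal{I}_Z^*]$, so $\chi_{\mathcal{I}_{(X,Y,\xi)}^\lambda}(d_\mu^{X,\xi})=\delta^{k-m}\chi_{\mathcal{I}_t^\lambda}(\sigma_\mu|_Z)$. Applying the same diagrammatic trace argument on the basis of $\mathcal{I}_t^\lambda=\mathbb{C}[\mathcal{L}(t,j)]\otimes_{G_j}S^\lambda$---where no middle components ever appear because every part of a diagram in $\mathcal{I}_Z^*$ is propagating---gives $\chi_{\mathcal{I}_t^\lambda}(\sigma_\mu|_Z)=\sum_{\pi'}\chi^\lambda(\tau_{\pi'})$ over exactly the same index set. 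Substituting back proves the identity, and the stated convention matches the empty-sum case $j>t$.

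The main obstacle is step one, in particular isolating condition (iii): a $\sigma_\mu$-invariant partition $\pi$ on its own is not enough, and one must carefully rule out the configurations where $\sigma_\mu$ interchanges a propagating with a non-propagating block (these preserve the rank, so the product is nonzero, but produce a different $\mathcal{N}(k,j)$-representative and contribute nothing to the diagonal trace). Once (i)--(iii) are correctly pinned down, the rest is a clean combinatorial reorganisation plus an identical trace computation in the cleaner $\mathcal{I}_Z^*$-setting, where the absence of middle components removes the only real complication.
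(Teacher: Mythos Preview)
Your proposal is correct and follows essentially the same route as the paper: compute the trace of $d_\mu$ on the basis $\{d\otimes v_T\}$ of $\mathcal{P}_k^\lambda$, isolate exactly the conditions (your (i)--(iii), which coincide with the paper's three bullet points) under which $d$ contributes, and then stratify by the pair $(X,\xi)$ coming from the non-propagating top blocks. The only cosmetic difference is that the paper matches contributions via an explicit bijection $d\mapsto d^{X,\xi}$ between diagrams, whereas you evaluate both sides as the common expression $\delta^{k-m}\sum_{\pi'}\chi^\lambda(\tau_{\pi'})$ and compare; the underlying argument and all nontrivial verifications are the same.
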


\begin{proof}
Recall that $\mathcal{B}^{\lambda} = 
\{d\otimes v_{T}\mid \, d\in\mathcal{N}(k, j)\text{ and } 
T\in \text{ SYT$(\lambda)$}\}$ is a basis of $\mathcal{P}_k^\lambda$. 
By definition, $\chi_{\mathcal{P}_k^\lambda}(d_\mu)$ can be computed
as the sum, taken over all $d\otimes v_{T}\in \mathcal{B}^{\lambda}$,
with the corresponding summand being
the coefficient at $d\otimes v_T$ when expressing
$d_\mu (d\otimes v_T)$ in the basis $\mathcal{B}^{\lambda}$.
In order for $d\otimes v_T$ to appear with nonzero coefficient 
in $d_\mu(d\otimes v_T)$, it is necessary that 
$d_\mu \circ d = d \circ \sigma$,
for some $\sigma\in S_{|\lambda|}$. This necessary condition gives, 
from the concatenation rule for partition diagrams, the 
following conditions on th blocks of $d$:
\begin{itemize}
\item $\{m+1\}, \ldots, \{k\}$ must be parts of the diagram $d$;
\item the collection of the top non-propagating parts of $d$
inside $[m]$ must be $d_\mu$ invariant (and hence we can apply to 
it the description from Subsection~\ref{newsec-composition});
\item the collection of the top shadows of the
propagating parts of $d$ must be $d_\mu$ invariant (and hence we 
can apply to it the description from Subsection~\ref{newsec-composition}).
\end{itemize}
In the remainder of the proof, we consider only those $d\in\mathcal{N}(k,j)$ 
which satisfy all the above conditions. Let $X$ be the union of 
the top non-propagating parts of $d$ inside $[m]$ and
$\xi$ be the set partition of $X$ given by these 
top non-propagating parts of $d$. Then, directly from the definitions,
we have that
both $d_\mu \circ d=d_\mu^{X,\xi} \circ d$
and $d_\mu d=d_\mu^{X,\xi} d$ (the difference between the two 
identities is the factor $\delta^{k-m}$ appearing on both sides
of the second identity).

Define $d^{X,\xi}$ as follows:
\begin{itemize}
\item $d^{X,\xi}$ has parts $\{s\}$ and $\{s'\}$, for each $s>m$;
\item $d^{X,\xi}$ has parts $K\cup K'$, for any part $K$ of $\xi$;
\item if $K_1,K_2,\dots,K_r$ is the list of top shadows of the 
propagating parts of $d$ with respect to the minimal element order,
then the remaining propagating parts of $d^{X,\xi}$ are:
\begin{displaymath}
K_1\cup\{a_1'\},K_2\cup\{a_2'\},\dots, K_{r-1}\cup\{a_{r-1)}'\}
\text{ and } K_r\cup K',
\end{displaymath}
where $a_1,a_2,\dots,a_{r-1}$ are the $r-1$ smallest elements in
$[m]\setminus X$ written in the increasing order 
$K=[m]\setminus (X\cup \{a_1,a_2,\dots,a_{r-1}\})$.
\end{itemize}
An example of both $d_\mu^{X,\xi}$ and $d^{X,\xi}$ can be found in
Example~\ref{ex:ex}. 
Directly from the definition, we see that 
$d^{X,\xi}\in \mathcal{K}_{(X,Y,\xi)}(t, j)$.

From the construction, we have that 
$d_\mu \circ d = d \circ \sigma$ implies
$d_\mu^{X,\xi}\circ d^{X,\xi}=d^{X,\xi}\circ \sigma$,
for the same $\sigma$.
Therefore, 
the contribution of $d_\mu d\otimes v_{T}$ to the computation of
$\chi_{\mathcal{P}_k^\lambda}(d_\mu)$ coincides with
the contribution of $d_\mu^{X,\xi} d^{X,\xi}\otimes v_{T}$
to the computation of 
$\chi_{\mathcal{I}_{(X,Y,\xi)}^\lambda}(d_\mu^{X,\xi})$.
As any possible cross-contributions result in $0$ since the number
of propagating lines in $d_\mu \circ d$ strictly decreases compared to
the number of propagating lines in $d$,
the claim of the theorem follows.
\end{proof}

\begin{example}\label{ex:ex}
Let $k=12$ and $\mu = (2,3,1,2,2)\models 10$. Consider $d\in\mathcal{N}(12, 2)$ given by:

\begin{center}
\begin{tikzpicture}[scale=1,mycirc/.style={circle,fill=black, minimum size=0.1mm, inner sep = 1.1pt}]
\node at (-1,0.5) {$d=$};
\node[mycirc,label=above:{$1$}] (n1) at (0,1) {};
   \node[mycirc,label=above:{$2$}] (n2) at (1,1) {};
   \node[mycirc,label=above:{$3$}] (n3) at (2,1) {};
   \node[mycirc,label=above:{$4$}] (n4) at (3,1) {};
   \node[mycirc,label=above:{$5$}] (n5) at (4,1) {};
   \node[mycirc,label=above:{$6$}] (n6) at (5,1) {};
   \node[mycirc,label=above:{$7$}] (n7) at (6,1) {};
   \node[mycirc,label=above:{$8$}] (n8) at (7,1) {};
   \node[mycirc,label=above:{$9$}] (n9) at (8,1) {};
   \node[mycirc,label=above:{$10$}] (n10) at (9,1) {};
   \node[mycirc,label=above:{$11$}] (n11) at (10,1) {};
   \node[mycirc,label=above:{$12$}] (n12) at (11,1) {};

   \node[mycirc,label=below:{$1'$}] (n1') at (0,0) {};
				\node[mycirc,label=below:{$2'$}] (n2') at (1,0) {};
				\node[mycirc,label=below:{$3'$}] (n3') at (2,0) {};
				\node[mycirc,label=below:{$4'$}] (n4') at (3,0) {};
				\node[mycirc,label=below:{$5'$}] (n5') at (4,0) {};
				\node[mycirc,label=below:{$6'$}] (n6') at (5,0) {};
				\node[mycirc,label=below:{$7'$}] (n7') at (6,0) {};
				\node[mycirc,label=below:{$8'$}] (n8') at (7,0) {};
                \node[mycirc,label=below:{$9'$}] (n9') at (8,0) {};
				\node[mycirc,label=below:{$10'$}] (n10') at (9,0) {};
				\node[mycirc,label=below:{$11'$}] (n11') at (10,0) {};
				\node[mycirc,label=below:{$12'$}] (n12') at (11,0) {};
          \path[-, draw](n1) to (n2);
				\path[-,draw](n3) to (n4) to (n5);  
                \path[-, draw](n3) to (n1');
				\path[-,draw](n7) to (n8);
                \path[-, draw](n9) to (n10);
				\path[-, draw](n9) to (n2');
\end{tikzpicture}
\end{center}   
Then $Y=\{11,12\}$, $X=\{1,2,6,7,8\}$ and $\xi=\{\{1,2\}, \{6\}, \{7,8\}\}$, 
\begin{center}
\begin{tikzpicture}[scale=1,mycirc/.style={circle,fill=black, minimum size=0.1mm, inner sep = 1.1pt}]
\node at (-1,0.5) {$d_{\mu}^X=$};
\node[mycirc,label=above:{$1$}] (n1) at (0,1) {};
   \node[mycirc,label=above:{$2$}] (n2) at (1,1) {};
   \node[mycirc,label=above:{$3$}] (n3) at (2,1) {};
   \node[mycirc,label=above:{$4$}] (n4) at (3,1) {};
   \node[mycirc,label=above:{$5$}] (n5) at (4,1) {};
   \node[mycirc,label=above:{$6$}] (n6) at (5,1) {};
   \node[mycirc,label=above:{$7$}] (n7) at (6,1) {};
   \node[mycirc,label=above:{$8$}] (n8) at (7,1) {};
   \node[mycirc,label=above:{$9$}] (n9) at (8,1) {};
   \node[mycirc,label=above:{$10$}] (n10) at (9,1) {};
   \node[mycirc,label=above:{$11$}] (n11) at (10,1) {};
   \node[mycirc,label=above:{$12$}] (n12) at (11,1) {};

   \node[mycirc,label=below:{$1'$}] (n1') at (0,0) {};
				\node[mycirc,label=below:{$2'$}] (n2') at (1,0) {};
				\node[mycirc,label=below:{$3'$}] (n3') at (2,0) {};
				\node[mycirc,label=below:{$4'$}] (n4') at (3,0) {};
				\node[mycirc,label=below:{$5'$}] (n5') at (4,0) {};
				\node[mycirc,label=below:{$6'$}] (n6') at (5,0) {};
				\node[mycirc,label=below:{$7'$}] (n7') at (6,0) {};
				\node[mycirc,label=below:{$8'$}] (n8') at (7,0) {};
                \node[mycirc,label=below:{$9'$}] (n9') at (8,0) {};
				\node[mycirc,label=below:{$10'$}] (n10') at (9,0) {};
				\node[mycirc,label=below:{$11'$}] (n11') at (10,0) {};
				\node[mycirc,label=below:{$12'$}] (n12') at (11,0) {};
\path[-, draw](n1) to (n2);
\path[-, draw](n1') to (n2');
\path[-,draw] (n1) to (n1');
\path[-,draw](n3) to (n4');  
\path[-,draw](n4) to (n5');
\path[-,draw] (n5) to (n3');
\path[-,draw] (n6) to (n6');
\path[-,draw] (n7) to (n8);
\path[-,draw] (n7') to (n8');
\path[-,draw] (n7) to (n7');
\path[-,draw] (n9) to (n10');
\path[-,draw] (n9') to (n10);
\end{tikzpicture}
\end{center}
and 
  \begin{center}  \begin{tikzpicture}[scale=1,mycirc/.style={circle,fill=black, minimum size=0.1mm, inner sep = 1.1pt}]
  \node at (-1,0.5) {$d^X=$};
\node[mycirc,label=above:{$1$}] (n1) at (0,1) {};
   \node[mycirc,label=above:{$2$}] (n2) at (1,1) {};
   \node[mycirc,label=above:{$3$}] (n3) at (2,1) {};
   \node[mycirc,label=above:{$4$}] (n4) at (3,1) {};
   \node[mycirc,label=above:{$5$}] (n5) at (4,1) {};
   \node[mycirc,label=above:{$6$}] (n6) at (5,1) {};
   \node[mycirc,label=above:{$7$}] (n7) at (6,1) {};
   \node[mycirc,label=above:{$8$}] (n8) at (7,1) {};
   \node[mycirc,label=above:{$9$}] (n9) at (8,1) {};
   \node[mycirc,label=above:{$10$}] (n10) at (9,1) {};
   \node[mycirc,label=above:{$11$}] (n11) at (10,1) {};
   \node[mycirc,label=above:{$12$}] (n12) at (11,1) {};

   \node[mycirc,label=below:{$1'$}] (n1') at (0,0) {};
				\node[mycirc,label=below:{$2'$}] (n2') at (1,0) {};
				\node[mycirc,label=below:{$3'$}] (n3') at (2,0) {};
				\node[mycirc,label=below:{$4'$}] (n4') at (3,0) {};
				\node[mycirc,label=below:{$5'$}] (n5') at (4,0) {};
				\node[mycirc,label=below:{$6'$}] (n6') at (5,0) {};
				\node[mycirc,label=below:{$7'$}] (n7') at (6,0) {};
				\node[mycirc,label=below:{$8'$}] (n8') at (7,0) {};
                \node[mycirc,label=below:{$9'$}] (n9') at (8,0) {};
				\node[mycirc,label=below:{$10'$}] (n10') at (9,0) {};
				\node[mycirc,label=below:{$11'$}] (n11') at (10,0) {};
				\node[mycirc,label=below:{$12'$}] (n12') at (11,0) {};
                \path[-, draw](n1) to (n2);
\path[-, draw](n1') to (n2');
\path[-,draw] (n1) to (n1');
\path[-,draw] (n6) to (n6');
\path[-,draw] (n7) to (n8);
\path[-,draw] (n7') to (n8');
\path[-,draw] (n7) to (n7');
\path[-,draw](n3) to (n4) to (n5);  
\path[-, draw](n3) to (n3');
\path[-,draw] (n9) to (n4');
\path[-,draw] (n9) to (n10);
\path[-,draw](n9') to (n10');
\path[-,draw](n4') to (n5');
\draw[-,draw] (n5') .. controls(6.5,-1.5)..(n9');
\end{tikzpicture}

                \end{center}
\end{example}

\subsection{Partition algebras as centralizers over rook monoid}

For $\sigma\in R_n$ and $d\in P_k(\delta)$, let 
$$\langle \sigma\cdot (v_{i_1}\otimes\cdots\otimes v_{i_k}, 
\mathcal{D})\cdot d, (v_{i_1}\otimes\cdots\otimes v_{i_k}, \mathcal{D})\rangle$$
denote the coefficient at $(v_{i_1}\otimes\cdots\otimes v_{i_k}, \mathcal{D})$
when expressing 
$\sigma\cdot(v_{i_1}\otimes\cdots\otimes v_{i_k}, \mathcal{D})\cdot d$ in
the basis given by \eqref{eq:1}. Then the bitrace of $(\sigma, d)$, for the 
$R_n$-$P_k(\delta)$-bimodule $W_{k,n}$, is defined as follows:
$$\bitrace(\sigma, d):= \sum_{(v_{i_1}\otimes\cdots\otimes v_{i_k}, \mathcal{D})} \langle \sigma\cdot (v_{i_1}\otimes\cdots\otimes v_{i_k}, \mathcal{D})\cdot d, (v_{i_1}\otimes\cdots\otimes v_{i_k}, \mathcal{D})\rangle. $$

\begin{theorem}\label{thm:key}
For $\delta\in \C$ such that $\mathcal{P}_k(\delta)$ is semi-simple, 
we have the decomposition
$$W_{k,n} \cong 
\bigoplus_{0\leq i\leq \min\{n,k\}}
\bigoplus_{\lambda\vdash i} R_n^{\lambda}\otimes P_k^{\lambda},   $$ 
as $R_n$-$P_k(\delta)$-bimodules.
\end{theorem}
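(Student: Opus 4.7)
The strategy is the bitrace argument. The monoid algebra $\mathbb{C}[R_n]$ is always semi-simple and $\mathcal{P}_k(\delta)$ is semi-simple by hypothesis, so an $R_n$-$\mathcal{P}_k(\delta)$-bimodule is determined up to isomorphism by its bitrace. Writing $U_{k,n}$ for the right hand side of the claimed isomorphism, its bitrace equals $\sum_{\lambda}\chi_{R_n^\lambda}(\sigma)\chi_{\mathcal{P}_k^\lambda}(d)$, so it suffices to show that this expression agrees with $\bitrace_{W_{k,n}}(\sigma,d)$ for every $(\sigma,d)\in R_n\times\mathcal{P}_k(\delta)$.

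For each fixed $\sigma$, the map $d\mapsto\bitrace_{W_{k,n}}(\sigma,d)$ is a $\mathbb{C}$-linear combination of characters of $\mathcal{P}_k(\delta)$-modules (namely the multiplicity spaces in the $R_n$-isotypic decomposition of $W_{k,n}$), and the same holds for the right hand side. Hence by \eqref{eq:inde} both sides agree on every $d$ as soon as they agree on all elements of the form $d_\mu$ from Subsection~\ref{sec:char}. I therefore fix a composition $\mu$ of some $m\leq k$ and the associated permutation $\sigma_\mu$ and aim at the identity for $d=d_\mu$.

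Next, I group the basis vectors of $W_{k,n}$ from \eqref{eq:1} according to a pair $(X,\xi)$: $X$ is the union of the parts of $\mathcal{D}$ contained in $[m]$ and $\xi$ is the set partition of $X$ induced by $\mathcal{D}$, while $Y$ is taken to be $[k]\setminus[m]$ (so any singleton of $\mathcal{D}$ at a position in $[m]$ is recorded as a singleton part of $\xi$). A direct inspection of the action rule \eqref{eq:act}, using that the top set-partition of $d_\mu$ consists of singletons and that $d_\mu$ has the identity-like $E_1$ factors outside $[m]$, shows that a basis vector contributes a nonzero diagonal entry to $\bitrace_{W_{k,n}}(\sigma,d_\mu)$ only when $[k]\setminus[m]\subseteq Y$ and $\xi\in\mathcal{SP}_X^\mu$ in the notation of Subsection~\ref{newsec-composition}; in that case, the vector lies in $V_{t,\xi}$ from Subsection~\ref{sec:swd} and the diagonal coefficient coincides with the one produced by the action of $d_\mu^{X,\xi}\in\mathcal{I}(X,Y,\xi)$ on the same vector. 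This yields
\begin{equation*}
\bitrace_{W_{k,n}}(\sigma,d_\mu)=\sum_{X\subseteq[m]}\sum_{\xi\in\mathcal{SP}_X^\mu}\bitrace_{V_{t,\xi}}(\sigma,d_\mu^{X,\xi}).
\end{equation*}

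By Proposition~\ref{prop:mild} and the double centralizer property, $V_{t,\xi}\cong\bigoplus_\lambda R_n^\lambda\otimes\mathcal{I}_{(X,Y,\xi)}^\lambda$ as $R_n$-$\mathcal{I}(X,Y,\xi)$-bimodules, whence $\bitrace_{V_{t,\xi}}(\sigma,d_\mu^{X,\xi})=\sum_\lambda\chi_{R_n^\lambda}(\sigma)\chi_{\mathcal{I}_{(X,Y,\xi)}^\lambda}(d_\mu^{X,\xi})$. Interchanging summations and applying Theorem~\ref{thm:cr} to the inner sum collapses it to $\chi_{\mathcal{P}_k^\lambda}(d_\mu)$, giving the desired identity at $d_\mu$ and, by the reduction above, at every $d$. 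The principal technical obstacle is the diagrammatic verification in the previous paragraph: one must translate the refinement and rank-preservation clauses of \eqref{eq:act} precisely into the $\sigma_\mu$-invariance conditions that define $\mathcal{SP}_X^\mu$, and confirm that the scalar on the diagonal matches what $d_\mu^{X,\xi}$ produces rather than what $d_\mu$ produces. Once this careful bookkeeping is in place, Proposition~\ref{prop:mild} and Theorem~\ref{thm:cr} assemble the conclusion smoothly.
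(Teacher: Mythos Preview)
Your proposal is correct and follows essentially the same route as the paper's own proof: reduce the bimodule isomorphism to a bitrace identity, restrict attention to the distinguished elements $d_\mu$ via \eqref{eq:inde}, group the contributing basis vectors of $W_{k,n}$ by the data $(X,\xi)$ with $Y=[k]\setminus[m]$, identify each group with the bitrace on $V_{t,\xi}$ for the subalgebra $\mathcal{I}(X,Y,\xi)$, and then invoke Proposition~\ref{prop:mild} and Theorem~\ref{thm:cr}. The only organizational difference is that the paper isolates the $\lambda=\varnothing$ contribution (the ``$A$'' term, coming from the all-$e_0$ vectors, i.e.\ $X=[m]$, $t=0$) and matches it directly with $\chi_{\mathcal{P}_k^{\varnothing}}(d_\mu)$ before applying Theorem~\ref{thm:cr} only for $|\lambda|\geq 1$; your uniform treatment implicitly asks Theorem~\ref{thm:cr} to cover $\lambda=\varnothing$ as well, which is harmless but worth making explicit. (Also, your phrase ``only when $[k]\setminus[m]\subseteq Y$'' is vacuous as written; what you need and clearly intend is that every $j>m$ occurs in $\mathcal{D}$ as the singleton $\{j\}$, so that the vector actually lies in some $V_{t,\xi}$.)
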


\begin{proof}
Suppose that $\delta\in\C$ such that $\mathcal{P}_k(\delta)$ is semi-simple,
in particular, $\delta\neq 0$. Then both $\mathcal{P}_k(\delta)$
and $\mathbb{C}[R_n]$ are semi-simple and hence, to prove our theorem, 
it is enough to check that, for $\sigma\in R_n$ and $d\in \mathcal{P}_k(\delta)$,
we have
\begin{displaymath}
\bitrace(\sigma,d)= \sum_{\substack{\lambda\vdash i\\ 0\leq i\leq \min\{n,k\}}}\chi^{\lambda}_{R_n}(\sigma)\chi^{\lambda}_{\mathcal{P}_k(\delta)}(d). 
\end{displaymath}
From Section~\ref{sec:char} (see~\cite{6} more detail), recall that, for a 
given partition diagram $d\in \mathcal{P}_{k}(\delta)$, there is a
partition diagram $d_{\mu}$ indexed by some composition 
$\mu=(\mu_1,\ldots,\mu_l)$ satisfying~\eqref{eq:inde}. 
For a fixed $\sigma\in R_n$, the number $\bitrace(\sigma, d)$ 
(resp. $\bitrace(\sigma, d_{\mu})$) is the character for 
the action of $\mathcal{P}_k(\delta)$ on $\sigma\cdot W_{k,n}$ 
evaluated at $d$ (resp. $d_{\mu}$). Hence, from Section~\ref{sec:char}, 
it is enough to prove that
\begin{equation}\label{eq:des}
 \bitrace(\sigma,d_{\mu})= \sum_{\substack{\lambda\vdash i\\ 0\leq i\leq \min\{n,k\}}}\chi^{\lambda}_{R_n}(\sigma)\chi^{\lambda}_{\mathcal{P}_k(\delta)}(d_{\mu}).
 \end{equation}
 From the definitions, we can split the right hand side of
 \eqref{eq:des} as $A + B$, where 
 $$ A = \sum_{\substack{U \text{ is a set-partition } \\ \text{ of } [k]}} \langle (\sigma\cdot (e_0\otimes\cdots\otimes e_0, U)\cdot d_{\mu}, (e_0\otimes\cdots\otimes e_0, U)\rangle, \text{ and } $$
and
 $$B=\sum_{\substack{U \text{ is a set-partition of} \\ \text{ a proper subset of } [k]}} \,\,\sum_{(v_{i_1}\otimes\cdots\otimes v_{i_k}, \, U)} \langle (\sigma\cdot (v_{i_1}\otimes\cdots\otimes v_{i_k}, U)\cdot d_{\mu}, (v_{i_1}\otimes\cdots\otimes v_{i_k}, X)\rangle. $$
 Note that $\sigma\in R_n$ fixes $e_0$ and hence the sum $A$ is equal to 
 $\chi_{\mathcal{P}^\varnothing_k}(d_\mu)$ which is, clearly, equal to $\chi_{R^{\varnothing}}(\sigma)\chi_{\mathcal{P}^\varnothing_k}(d_\mu)$. 

Further, in order for $(v_{i_1}\otimes \cdots\otimes v_{i_k}, U)$ to appear 
with a non-zero coefficient in 
$\sigma\cdot(v_{i_1}\otimes \cdots\otimes v_{i_k}, U)\cdot d_{\mu}$ when expressed as a linear combination of vectors of the form~\eqref{eq:1}, the set-partition $X$ 
has to satisfy the following:
\begin{itemize}
\item for all $j>m$, the singleton $\{j\}$ must be a part of $U$;
\item the remaining parts of $U$ form a  $\mu$-invariant collection.
\end{itemize}
Denote $[k]\setminus[m]$ by $Y$, then the union of all parts of $U$ minus $Y$ 
by $X$ and the set partition of $X$ induced by $U$ by $\xi$. Then we have
the associated subalgebra $\mathcal{I}(X,Y,\xi)$ from Subsection~\ref{sec:swd}.
We also have the partition diagram $d_{\mu}^{X,\xi}\in \mathcal{I}(X,Y,\xi)$,
defined in  Subsection~\ref{sub:Cr}, such that
$$ (v_{i_1}\otimes\cdots\otimes v_{i_k}, X)\cdot d_{\mu}= (v_{i_1}\otimes\cdots\otimes v_{i_k}, X)\cdot d_{\mu}^{X,\xi}.$$
Combining this with the Schur--Weyl dualities from Subsection~\ref{sec:swd} and using the convention that, for $|\lambda|>t$, we have
$\chi_{\mathcal{I}^{\lambda}_{(X,Y,\xi)}}(d_{\mu}^{X,\xi}) = 0$, we obtain the following equality: 
$$B = \sum_{X\subset [m]}\sum_{\xi\in\mathcal{SP}_X^\mu}\,\,\,\sum_{1\leq |\lambda|\leq\min\{n,k\}}\chi_{R^{\lambda}}(\sigma)\chi_{\mathcal{I}_{(X,Y,\xi)}^{\lambda}}(d_{\mu}^{X,\xi}).$$
Using Theorem~\ref{thm:cr}, we can rewrite this as
$$B= \sum_{1\leq |\lambda|\leq \min\{n,k\}} \chi_{{R}^{\lambda}}(\sigma)
\chi_{\mathcal{P}^{\lambda}_k}(d_\mu).$$
Adding $A$ and $B$, we get our desired expression~\eqref{eq:des}.
\end{proof}   

Theorem~\ref{thm:key} and the fact $\dim \End_{R_n}(W_{k,n})\leq B(2k)$, with equality if $n\geq k$, imply the following statement, 
which is the main result of this paper.

\begin{theorem}\label{thm:main2}
 If $\mathcal{P}_k(\delta)$ is semi-simple, the action map $\varphi: \mathcal{P}_k(\delta) \to \End_{R_n}(W_{k,n})$ 
is surjective. In case $n\geq k$ and $\delta\neq 0$, the map is surjective iff $\mathcal{P}_k(\delta)$ is 
semi-simple.
\end{theorem}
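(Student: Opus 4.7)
The plan is to derive Theorem~\ref{thm:main2} directly from
Theorem~\ref{thm:key} together with a $\delta$-independent dimension count.
The first observation I would record is that $W_{k,n}$, viewed purely as
an $R_n$-module, does not depend on the parameter $\delta$ at all, since
its definition uses only the restriction and induction functors between
the module categories of $R_{n-1}$ and $R_n$. Picking any $\delta'$ for
which $\mathcal{P}_k(\delta')$ is semi-simple and applying Theorem~\ref{thm:key}
to that $\delta'$, one obtains an $R_n$-module decomposition
$W_{k,n}\cong \bigoplus_{|\lambda|\leq \min\{n,k\}}(R_n^{\lambda})^{\oplus \dim \mathcal{P}_k^\lambda}$,
which is then valid for every $\delta$. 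Combined with Schur's lemma this yields
$$
\dim\End_{R_n}(W_{k,n})=\sum_{|\lambda|\leq \min\{n,k\}}(\dim \mathcal{P}_k^\lambda)^2,
$$
which is at most $B(2k)$ and equals $B(2k)$ precisely when $n\geq k$
(using that $\sum_{|\lambda|\leq k}(\dim \mathcal{P}_k^\lambda)^2=\dim \mathcal{P}_k(\delta')=B(2k)$
in the semi-simple case).

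For the first assertion, I would assume $\mathcal{P}_k(\delta)$ is semi-simple
and invoke Theorem~\ref{thm:key} for this $\delta$ directly. Since the
simples $R_n^{\lambda}$ occurring in the bimodule decomposition are
pairwise non-isomorphic, Schur's lemma collapses $\End_{R_n}(W_{k,n})$ to
$\bigoplus_{|\lambda|\leq \min\{n,k\}}\End_{\mathbb{C}}(\mathcal{P}_k^\lambda)$.
The Wedderburn decomposition
$\mathcal{P}_k(\delta)\cong \bigoplus_{|\mu|\leq k}\End_{\mathbb{C}}(\mathcal{P}_k^\mu)$
then matches $\varphi$ summand by summand: the components indexed by
$|\mu|>\min\{n,k\}$ are annihilated, while the remaining components map
isomorphically onto the corresponding summand of $\End_{R_n}(W_{k,n})$.
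Surjectivity of $\varphi$ is immediate.

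For the second assertion, the \emph{if} direction is subsumed by the first
(semi-simplicity forces $\delta\notin\{0,1,\dots,2k-2\}$, so in particular
$\delta\neq 0$). For the converse, assume $n\geq k$, $\delta\neq 0$, and
that $\varphi$ is surjective. The dimension formula from the first
paragraph gives $\dim\End_{R_n}(W_{k,n})=B(2k)=\dim \mathcal{P}_k(\delta)$,
so $\varphi$ is in fact an isomorphism. Since $\mathbb{C}[R_n]$ is
semi-simple (as $R_n$ is a finite inverse monoid), $W_{k,n}$ is a
semi-simple $R_n$-module, whence $\End_{R_n}(W_{k,n})$ is a direct sum
of matrix algebras, hence semi-simple; the isomorphism $\varphi$
transports this back to $\mathcal{P}_k(\delta)$.

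The substantive work has already been carried out in Theorem~\ref{thm:key}
(and ultimately in the character identity of Theorem~\ref{thm:cr}), so at
this stage everything reduces to Wedderburn bookkeeping. The only step
requiring genuine care is the transfer of the dimension count for
$\End_{R_n}(W_{k,n})$ from the generic semi-simple regime to arbitrary
$\delta$; this rests on the observation that the purely functorial
definition of $W_{k,n}$ makes its $R_n$-module structure independent of
$\delta$, which is what allows the formula obtained at one semi-simple
value of $\delta$ to be used uniformly for all $\delta$.
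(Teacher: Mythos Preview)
Your proposal is correct and follows the same approach as the paper: deduce everything from Theorem~\ref{thm:key} together with the dimension bound $\dim\End_{R_n}(W_{k,n})\leq B(2k)$, with equality when $n\geq k$. The paper leaves the deduction as a one-line remark before the theorem statement, whereas you spell out the Wedderburn bookkeeping and the converse argument explicitly; in particular, your observation that the $R_n$-module structure of $W_{k,n}$ is $\delta$-independent is exactly what justifies transferring the dimension formula from a generic semi-simple parameter to an arbitrary one, a point the paper takes for granted.
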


\begin{example}
Let $k=1$. Then $W_{k,n}= \C^n\oplus \C_{\triv}$. 
Both $\mathcal{P}_k(\delta)$ and $\End_{R_n}(W_{k,n})$ are $2$-dimensional. 
Let $d_1=\{\{1\}, \{1'\}\}$ and $d_2=\{\{1,1'\}\}$. We will compute 
$\phi(d_1)$ and $\phi(d_2)$. Then 
$$\phi(d_1)(e_j)=
\begin{cases}
\delta e_0,   & \text{ if } j=0;\\
0, & \text{ if } 1\leq j\leq n;
\end{cases} \quad\text{ and } \quad
\phi(d_2)(e_j)= e_j \text{ for all } 0\leq j\leq n.$$
In this case the map $\phi$ is surjective iff $\delta\neq 0$. 
Note that $\delta=0$ is the only value for which $\mathcal{P}_1(\delta)$
is not semi-simple.
\end{example}

\subsection{RSK type bijection}

\begin{proposition}\label{prop-abc5}
There is a bijection between the set of all $n$-restricted set-partitions of 
$[n+k]$ and the set consisting of all triples $((P,Q), T)$,
where 
\begin{itemize}
\item $P$ is a standard set-partition tableau with disjoint entries,
the union of entries of which is a subset of $[k]$,
\item $Q$ is an standard tableau of the same shape as $P$
with entries from $[n]$,
\item $T$ is a standard row-shape set-partition tableau
with disjoint entries, the union of the entries of which 
complement the union of the entries of $P$ to $[k]$.
\end{itemize}
\end{proposition}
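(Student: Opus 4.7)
The plan is to combine Proposition~\ref{prop:2} with the Robinson--Schensted correspondence. By Proposition~\ref{prop:2}, the $n$-restricted set-partitions of $[n+k]$ are already in bijection with the basis vectors of $W_{k,n}$ listed in~\eqref{eq:1}, so it will suffice to bijectively convert each such basis vector into a triple $((P,Q),T)$ of the required form.

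Starting from a basis vector $(v_{i_1}\otimes\cdots\otimes v_{i_k},\mathcal{D})$, I would first extract $T$ by regarding $\mathcal{D}$ as a row-shape set-partition tableau whose blocks are listed in min-entry order; the entries of $T$ then form a set-partition of $[k]\setminus J$, where $J:=\{j\in[k]: v_{i_j}\neq e_0\}$. The remaining tensor data encodes a word $w\colon J\to[n]$ defined by $w(j)=l$ iff $v_{i_j}=e_l$. I would then split $w$ into its image $S\subseteq[n]$ and the fiber set-partition $\{w^{-1}(l):l\in S\}$ of $J$, list these fibers as $B_1\prec B_2\prec\cdots\prec B_r$ in min-entry order, and record the bijection $\pi\colon[r]\to S$ sending $i$ to the common $w$-value on $B_i$; the word $w$ is recovered from this data.

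Next I would apply the Robinson--Schensted correspondence to the sequence $(\pi(1),\pi(2),\dots,\pi(r))$ of distinct integers in $[n]$, which yields a pair $(U,V)$ of standard Young tableaux of a common shape $\lambda\vdash r$, with $U$ having entries $[r]$ and $V$ having entries $S\subseteq[n]$. Setting $Q:=V$ and defining $P$ as the set-partition tableau obtained from $U$ by relabeling each entry $i\in[r]$ as $B_i$ delivers the desired triple. Since $B_1\prec B_2\prec\cdots\prec B_r$ in the min-entry order while $1<2<\cdots<r$ in the usual order, the relabeling $i\leftrightarrow B_i$ is an order isomorphism, so $P$ is a standard set-partition tableau if and only if $U$ is a standard Young tableau. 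The content of $P$ is $J$, which complements the content of $T$ to $[k]$, and $P$ and $Q$ share the shape $\lambda$.

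Bijectivity is established by reversing each step: from $((P,Q),T)$, read off $\mathcal{D}$ from $T$, recover the ordered list $B_1\prec\cdots\prec B_r$ from the entries of $P$ in min-entry order, convert $P$ to a standard Young tableau $U$ via $B_i\mapsto i$, set $V:=Q$, apply inverse Robinson--Schensted to $(U,V)$ to recover $\pi$, and reconstruct $w$ and hence the basis vector. The main point requiring care is the compatibility of the two orderings highlighted above, which turns the correspondence between standard Young tableaux on $[r]$ and standard set-partition tableaux with block entries $\{B_1,\dots,B_r\}$ into a transparent order-preserving relabeling; once this is in hand, bijectivity is inherited directly from that of Robinson--Schensted, and the proof will be complete.
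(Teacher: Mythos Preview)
Your proof is correct and follows essentially the same approach as the paper: invoke Proposition~\ref{prop:2} to pass to basis vectors, peel off $T$ from $\mathcal{D}$, and apply Robinson--Schensted to the remaining data to produce $(P,Q)$. The only cosmetic difference is that the paper applies RS directly to the two-line array $\left(\begin{smallmatrix} 1 & \cdots & n \\ G_1 & \cdots & G_n \end{smallmatrix}\right)$ (with empty columns deleted), inserting the sets $G_s$ and recording the indices $s\in[n]$, whereas you apply RS to the integer word $(\pi(1),\dots,\pi(r))$ and then relabel the recording tableau via $i\mapsto B_i$; by the symmetry of RS these two descriptions yield the same bijection.
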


\begin{proof}
By Proposition~\ref{prop:2}, an $n$-restricted set-partition of $[n+k]$ 
uniquely corresponds to a vector 
$(v_{i_1}\otimes v_{i_2}\otimes\cdots\otimes v_{i_k}, \mathcal{D})$ 
of the form \eqref{eq:1}, 
where  $\mathcal{D}=\{C_1, C_2,\ldots, C_l\}$ is a set-partition of a 
subset of $[k]$. Assume that
$$\min C_1<\min C_2 < \min< \cdots <\min C_l.$$
We can now define $T$ as the row-shape tableau whose $j$-th entry 
is $C_j$, for $1\leq j\leq l$. 

Further, from the vector 
$v_{i_1}\otimes v_{i_2}\otimes\cdots\otimes v_{i_k}$, we can read off 
the following information:
\begin{equation}\label{eqnnd45}
 \begin{pmatrix}
1 & 2 & 3 &\ldots & n\\
G_1 & G_2 & G_3 & \ldots & G_n
\end{pmatrix},
\end{equation}
where $G_s$ consists of all $t$, for which $i_t=s$.
We delete all columns, for which $G_i=\varnothing$.
We also note that the remaining non-empty $G_s$'s 
are totally ordered with respect to the minimum entry 
order.

Now we can apply the usual Robinson-Schensted bijection
to this reduced version of \eqref{eqnnd45}. 
The insertion tableau that we get will be a 
standard set-partition tableau $P$ with disjoint entries.
The union of these entries is a subset of $[k]$. 
The recording tableau $Q$ will be a standard tableau of 
the same shape as $P$ and with entries from $[n]$. 
It is easy to see that this whole procedure is reversible
and hence bijective. 
\end{proof}

As an immediate consequence of Proposition~\ref{prop-abc5}, we obtain:

\begin{corollary}
The number of $n$-restricted set-partitions of $[n+k]$ is equal to 
$$ \sum_{\substack{\lambda\vdash i\\ 0\leq i\leq \min\{n,k\}}} \sum_{r=i}^k\binom{n}{i} \binom{k}{r}S(r,i)B(k-r)(f^{\lambda})^2,$$
where $S(r,i)$ denotes the Stirling number of the second kind. 
\end{corollary}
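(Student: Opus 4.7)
The plan is to apply Proposition~\ref{prop-abc5} and count the resulting triples $((P,Q), T)$ directly, stratifying the count by the common shape $\lambda\vdash i$ of $P$ and $Q$ (with $0\leq i\leq \min\{n,k\}$) and by the cardinality $r$ of the union of entries of $P$ (with $i\leq r\leq k$). For each fixed pair $(\lambda, r)$, I would multiply three independent counts corresponding to the three components $P$, $Q$ and $T$, check that the product equals the desired summand, and then sum.

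For $P$: the $r$-element support of the entries may be any subset of $[k]$, contributing $\binom{k}{r}$; this support is partitioned into $i$ non-empty blocks, contributing $S(r,i)$; and once the blocks are canonically labeled $C_1\prec C_2\prec\cdots\prec C_i$ in the minimum entry order, the standardness condition reduces to specifying a standard Young tableau of shape $\lambda$, giving an additional $f^{\lambda}$. For $Q$: one selects an $i$-subset of $[n]$ and fills $\lambda$ as a standard Young tableau, contributing $\binom{n}{i}f^{\lambda}$. For $T$: its entries form an arbitrary set-partition of the $(k-r)$-element complement of the support of $P$, contributing $B(k-r)$, and the row-shape layout is then forced by the minimum entry order.

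Multiplying the three factors yields
$\binom{n}{i}\binom{k}{r}S(r,i)B(k-r)(f^{\lambda})^2$,
and summing over all admissible $i$, $\lambda\vdash i$ and $r$ produces the stated identity. I do not anticipate any serious obstacle: the only conceptual point is the identification of a standard set-partition tableau (of shape $\lambda$ with a prescribed block content) with an ordinary standard Young tableau via the minimum-entry labeling of blocks, and this is immediate from the definitions.
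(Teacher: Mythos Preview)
Your proposal is correct and is exactly the argument the paper has in mind: the corollary is stated as an ``immediate consequence'' of Proposition~\ref{prop-abc5}, and your stratification by $(\lambda,r)$ together with the three independent counts for $P$, $Q$, and $T$ is precisely how one unpacks that immediacy. The identification of a standard set-partition tableau of shape $\lambda$ (with prescribed block content) with an ordinary standard Young tableau via the minimum-entry labeling is indeed straightforward from the definitions in Section~\ref{sec:parti}, so there is no gap.
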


\section{Further combinatorial connections}\label{s-comb}

\subsection{Young's lattice and vacillating tableaux}\label{s-comb1}

Recall that the classical {\em Young's lattice} $\mathbf{Y}$ is 
an infinite unoriented graph whose vertices are all partitions of all 
non-negative integers. The edges are given as follows:
two vertices $\lambda\vdash n$ and 
$\mu\vdash n+1$ are connected by an edge if and only if
$\lambda$ can be obtained from $\mu$ by removing a removable node
(equivalently, $\mu$ can be obtained from $\lambda$ by inserting an
insertable node).

Recall from \cite{HL} that an {\em $n$-vacillating tableau
of semi-length $k$} is a walk on $\mathbf{Y}$ which starts at $(n)$,
has $2k$ steps and at each odd step we remove a removable node
while at each even step we insert an insertable node.
Next, we recall from \cite{CY07} that a {\em simplified 
vacillating tableau of shape $\mu\vdash m$ and semi-length $k$} 
is a walk on $\mathbf{Y}$ which starts at $\varnothing\vdash 0$,
has $2k$ steps and at each odd step we either do nothing or
remove a removable node while at each even step we either do nothing
or insert an insertable node. Note that the definition forces
$m\leq k$. We denote the number of simplified 
vacillating tableau of shape $\mu\vdash m$ and semi-length $k$
by $g_k(\mu)$.

\subsection{Connection to 
$(\Ind^{R_n}_{R_{n-1}}\circ\Res^{R_n}_{R_{n-1}})^k(\mathbb{C}_{\triv})$}\label{s-comb2}

Recall that, for $\lambda\vdash m$, we denote by $f^\lambda$ 
the dimension of the Specht module ${S}^\lambda$, that is, the number
of standard Young tableau of shape $\lambda$.

\begin{lemma}\label{lem-s-comb2.1}
Let $n\geq k$. The we have:
\begin{enumerate}[$($a$)$]
\item \label{lem-s-comb2.1.1}
The length of 
$(\Ind^{R_n}_{R_{n-1}}\circ\Res^{R_n}_{R_{n-1}})^k(\mathbb{C}_{\triv})$
equals $\sum_{\mu}g_k(\mu)$.
\item \label{lem-s-comb2.1.2}
The dimension of 
$(\Ind^{R_n}_{R_{n-1}}\circ\Res^{R_n}_{R_{n-1}})^k(\mathbb{C}_{\triv})$
equals $\sum_{\mu}g_k(\mu)f^\mu\binom{n}{|\mu|}$.
\end{enumerate}
\end{lemma}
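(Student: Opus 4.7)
The plan is to identify the Jordan--Hölder multiplicities of simple summands in $W_{k,n}$ with the counts $g_k(\mu)$, by tracking the effect of each $\Res$ and $\Ind$ step on the simple decomposition, and then to read off both statements from this identification.

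First I would verify that the hypothesis $n\geq k$ keeps the iteration in the ``generic'' branching regime. Starting from $\mathbb{C}_{\triv}=R_n^\varnothing$, an easy inductive count shows that any simple summand appearing before the $j$-th $\Res$-$\Ind$ pair is indexed by a partition of size at most $j-1\leq k-1<n$. Consequently the first branch of \eqref{eq:res}, namely
\[
\Res^{R_n}_{R_{n-1}}(R_n^\mu)\cong R_{n-1}^\mu\oplus \bigoplus_{\nu\in\mu^-}R_{n-1}^{\nu},
\]
applies at every $\Res$ step, and the full induction rule with its ``do nothing'' summand $R_n^\lambda$ applies at every $\Ind$ step.

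Next I would translate the branching rules into the combinatorics of walks on Young's lattice. Applying $\Res^{R_n}_{R_{n-1}}$ to a simple summand $R_n^\lambda$ produces exactly one new summand for each ``do nothing or remove a removable node'' choice on $\lambda$; applying $\Ind^{R_n}_{R_{n-1}}$ to $R_{n-1}^\nu$ produces one summand for each ``do nothing or add an insertable node'' choice on $\nu$. Iterating $k$ times starting from $\varnothing$, the simple summands of $W_{k,n}$ isomorphic to $R_n^\mu$ are in natural bijection with sequences of $2k$ moves on $\mathbf{Y}$ beginning at $\varnothing$ and ending at $\mu$, in which odd steps are ``do nothing or remove'' and even steps are ``do nothing or add''. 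By the definition recalled in Subsection~\ref{s-comb1}, this bijection identifies the multiplicity of $R_n^\mu$ in $W_{k,n}$ with $g_k(\mu)$.

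Part \eqref{lem-s-comb2.1.1} is then immediate: since $\mathbb{C}[R_n]$ is semi-simple, the length of $W_{k,n}$ equals the sum of these multiplicities, which is $\sum_{\mu}g_k(\mu)$. For part \eqref{lem-s-comb2.1.2}, I would invoke the standard dimension formula
\[
\dim R_n^\mu=\binom{n}{|\mu|}f^\mu,
\]
which follows from $R_n^\mu\cong\mathbb{C}[\mathbb{L}_{|\mu|}]\otimes_{S_{|\mu|}}S^\mu$ together with $|\mathbb{L}_i|=\binom{n}{i}\,i!$ (elements of the $\mathcal{L}$-class of $e_i$ correspond to a choice of an $i$-subset of $[n]$ together with a bijection from this subset to $[i]$). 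Summing multiplicity times dimension over $\mu$ gives the claim.

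The only real obstacle is the bookkeeping at the boundary $|\mu|=n$, where the ``stay'' summand $R_{n-1}^\mu$ disappears from the restriction rule; but the hypothesis $n\geq k$ ensures that no partition of size $n$ is ever restricted, so the bijection with simplified vacillating tableaux is clean.
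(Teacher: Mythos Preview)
Your argument is correct and follows the same approach as the paper: identify each $\Res$ and $\Ind$ step with the ``do nothing or remove'' and ``do nothing or add'' moves of a simplified vacillating tableau, conclude that the multiplicity of $R_n^\mu$ in $W_{k,n}$ is $g_k(\mu)$, and then read off both claims using $\dim R_n^\mu=\binom{n}{|\mu|}f^\mu$. Your treatment is in fact more explicit than the paper's, which does not spell out why the hypothesis $n\geq k$ guarantees that the generic branching rule (with its ``stay'' summand) applies at every step.
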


\begin{proof}
The branching rule for the rook monoids, see Figure~\ref{fig1},
says that restricting a simple $R_n$-module which corresponds to some
$\lambda$ to $R_n$ decomposes into a multiplicity-free direct sum of
simple modules.  Moreover, the diagrams of simple which do appear are
obtained from $\lambda$ by either doing nothing or removing a 
removable node. Induction works in the dual way. 

Comparing this with the definition of simplified vacillating tableaux,
we see that the multiplicity of a simple module indexed by
$\mu$ in $(\Ind^{R_n}_{R_{n-1}}\circ\Res^{R_n}_{R_{n-1}})^k(\mathbb{C}_{\triv})$
is exactly $g_k(\mu)$.  The latter simple module has dimension
$f^\mu\binom{n}{|\mu|}$. This implies both claims of the lemma.
\end{proof}

By Proposition~\ref{prop:2}, the dimension of 
$(\Ind^{R_n}_{R_{n-1}}\circ\Res^{R_n}_{R_{n-1}})^k(\mathbb{C}_{\triv})$
equals the number of the $n$-restricted partitions of $[n+k]$.
Combining with Lemma~\ref{lem-s-comb2.1}\eqref{lem-s-comb2.1.2},
we obtain a formula how to compute the number of the $n$-restricted 
partitions of $[n+k]$ using $f^\mu$ and $g_k(\mu)$.

\subsection{Grothendieck group}\label{s-comb3}

Let $\mathcal{G}_0(R_n\text{- mod})$ denote the Grothendieck 
group of the category $R_n\text{- mod}$. 
The space $\bigoplus_{n\geq 0}\mathcal{G}_0(R_n\text{- mod})$ is a ring
with multiplication given by an appropriate induction. 
Let $\Lambda$ denote the ring of symmetric functions.  
In~\cite{9}, we prove that 
$$ \bigoplus_{n\geq 0}\mathcal{G}_0(R_n\text{- mod}) \cong \Lambda \otimes
\mathbb{Z}[\mathbb{Z}_{\geq 0}],$$
where $\mathbb{Z}[\mathbb{Z}_{\geq 0}]$ is the monoid algebra
of the monoid $(\mathbb{Z}_{\geq 0},+)$ over $\mathbb{Z}$. This bijection sends the irreducible module $R_n^\lambda$ of $R_n$ to the Schur function $s_\lambda$ in the $n$-th copy of $\Lambda$. 
 
Due to Theorem~\ref{thm:key}, under this bijection, the representation 
$(\Ind^{R_n}_{R_{n-1}}\circ\Res^{R_n}_{R_{n-1}})^k(\mathbb{C}_{\triv})$ 
corresponds to $\displaystyle{\sum_{\mu}g_k(\mu)s_\mu}$. 
Combining \cite[Example~3.18]{Solomon} 
or \cite[Theorem~2.20]{11} with Theorem~\ref{thm:1}, gives
$$\displaystyle{\sum_{\mu}g_k(\mu)s_\mu}= \sum_{i=|\lambda|}^k {k\choose i}B(k-i)\sum_{1\leq|\lambda|\leq n}s(i,|\lambda|)f^\lambda s_{\lambda}. $$
The multiplicity of $s_\lambda$ in the right hand side of the above is 
$\displaystyle\sum_{i=|\lambda|}^k {k\choose i}B(k-i)s(i,|\lambda|)f^\lambda$. 
If we denote by $B(k,|\lambda|)$ the number of set-partitions of $k$ which may 
contain any number of parts but satisfy the
condition that exactly $|\lambda|$ parts are marked (in particular, 
the total number of parts 
must be at least $|\lambda|$), then the latter multiplicity is 
precisely equal to $B(k,|\lambda|)f^\lambda$. Using the fact that the 
multiplicity of $s_\lambda$ in $h_1^k$, where $h_1$ is the complete 
homogeneous symmetric function of degree $1$, is $f^\lambda$, 
we obtain a representation-theoretic interpretation of \cite[Theorem~7]{17}.

\section*{Acknowledgments}
VM is partially supported by the Swedish Research Council. SS is partially supported by DST-INSPIRE faculty research grant DST/INSPIRE/04/2021/000268 and ANRF/ECRG/2024/002319/PMS.


\begin{thebibliography}{ccccccc}

\bibitem[AM]{AM} Andr{}\'e, C.; Martins, I.
Schur-Weyl dualities for the rook monoid: an 
approach via Schur algebras. 
Semigroup Forum {\bf 109} (2024), no. 1, 38--59.

\bibitem[B-Z24]{17} 
Berikkyzy, Z.; Harris, P.; Pun, A.; Yan, C.; Zhao, C.
Combinatorial identities for vacillating tableaux.
Integers {\bf 24A} (2024), Paper No. A4, 36 pp.

\bibitem[C-V19]{1} 
Caicedo, J.; Moll, V.; Ramirez, J.; Villamizar, D.
Extensions of set partitions and permutations.
Electron. J. Combin. {\bf 26} (2019), no. 2, Paper No. 2.20, 45 pp.

\bibitem[C-Y07]{CY07}
Chen, W.; Deng, E.; Du, R.; Stanley, R.; Yan, C.
Crossings and nestings of matchings and partitions.
Trans. Amer. Math. Soc. {\bf 359} (2007), no. 4, 1555--1575.

\bibitem[EL93]{EL93} Easdown D.; Lavers, T. 
The inverse partition semigroup.
Journal of Algebra {\bf 169} (1994), 393--421.

\bibitem[EL95]{EL95}  Easdown D.; Lavers, T.
The dual symmetric inverse monoid.
Semigroup Forum {\bf 50} (1995), 214--224.

\bibitem[FL98]{FL98} FitzGerald, D.; Leech, J. 
Dual symmetric inverse monoids and representation theory. 
J. Austral. Math. Soc. Ser. A {\bf 64} (1998), no. 3, 345--367.

\bibitem[GM09]{GM}
Ganyushkin, O.; Mazorchuk, V.
Classical finite transformation semigroups.
An introduction. Algebra and Applications, {\bf 9}. 
Springer-Verlag London, Ltd., London, 2009. xii+314 pp.

\bibitem[Gr02]{Grood}
Grood, C. Specht module analog for the rook monoid. 
Electron. J. Combin. {\bf 9} (2002), no. 1, Research Paper 2, 10 pp.

\bibitem[Hal01]{6} Halverson, T.
Characters of the partition algebras.
J. Algebra {\bf 238} (2001), no. 2, 502--533.

\bibitem[Hal04]{10} Halverson, T.
Representations of the {$q$}-rook monoid.
J. Algebra {\bf 273} (2004), no. 1, 227--251.

\bibitem[HL04]{HL} Halverson, T.; Lewandowski, T.
RSK insertion for set partitions and diagram algebras.
Electron. J. Combin. {\bf 11} (2004/06), no. 2, Research Paper 24, 24 pp.

\bibitem[HR05]{13} Halverson, T.; Ram, A. 
Partition algebras. 
European J. Combin. {\bf 26} (2005), no. 6, 869--921.

\bibitem[Jo94]{Jones} Jones. V.
The Potts model and the symmetric group.
Unpublished notes. 1994.

\bibitem[KM08a]{5} Kudryavtseva, G.; Mazorchuk, V.
Schur-{W}eyl dualities for symmetric inverse semigroups.
J. Pure Appl. Algebra {\bf 212} (2008), no. 8, 1987--1995.

\bibitem[KM08b]{KMb} Kudryavtseva, G.; Mazorchuk, V.
Partialization of categories and inverse braid-permutation monoids.
Internat. J. Algebra Comput. {\bf 18} (2008), no. 6, 989--1017. 

\bibitem[KM09a]{KM09} Kudryavtseva, G.; Mazorchuk, V.
Combinatorial Gelfand models for some semigroups and 
$q$-rook monoid algebras. Proc. Edinb. Math. Soc. (2) {\bf 52}
(2009), no. 3, 707--718.

\bibitem[KM09b]{KM09b} Kudryavtseva, G.; Mazorchuk, V.
On three approaches to conjugacy in semigroups
Semigroup Forum {\bf 78} (2009), no. 1, 14--20.

\bibitem[MSS11]{4} Mansour, T.; Schork, M.; Shattuck, M.
On a new family of generalized Stirling and Bell numbers.
Electron. J. Combin. {\bf 18} (2011), no. 1, Paper 77, 33 pp.

\bibitem[Ma94]{Martin} Martin, P. 
Temperley-Lieb algebras for nonplanar statistical mechanics~--
the partition algebra construction. J. 
Knot Theory Ramifications {\bf 3} (1994), no. 1, 51--82.

\bibitem[Ma96]{Martin96} Martin, P. 
The structure of the partition algebras. 
J. Algebra {\bf 183} (1996), no. 2, 319--358.

\bibitem[MS22]{9} Mazorchuk, V.; Srivastava, S.
Jucys-{M}urphy elements and {G}rothendieck groups for 
generalized rook monoids.
J. Comb. Algebra {\bf 60} (2022), no. 1-2, 185--222.

\bibitem[MS24]{MS24} Mazorchuk, V.; Srivastava, S.
Multiparameter colored partition category and the 
product of the reduced Kronecker coefficients. 
J. Pure Appl. Algebra {\bf 228} (2024), no. 3, Paper No. 107524, 60 pp.

\bibitem[MS25]{MS25} Mazorchuk, V.; Srivastava, S.
Kronecker coefficients for (dual) symmetric inverse semigroups. 
J. Aust. Math. Soc. 118 (2025), no. 1, 65--90.

\bibitem[Me11]{3} Mez{\"o}, I. The $r$-Bell numbers
J. Integer Seq. {\bf 14} (2011), no. 1, Article 11.1.1, 14 pp.

\bibitem[MR17]{2} Mez{\"o}, I.; Ram{\'\i}rez, J.
Divisibility properties of the $r$-Bell numbers and polynomials.
J. Number Theory {\bf 177} (2017), 136--152.
 
 
\bibitem[MS21]{11} Mishra, A.; Srivastava. S.
Jucys-{M}urphy elements of partition algebras for the rook monoid.
Internat. J. Algebra Comput. {\bf 31} (2021), no. 5, 831--864.
 
\bibitem[Pa06]{Pa} Paget, R. 
Representation theory of $q$-rook monoid algebras. 
J. Algebraic Combin. {\bf 24} (2006), no. 3, 239--252.
 
 
\bibitem[Sa01]{Sa}  Sagan, B. 
The symmetric group. Representations, combinatorial 
algorithms, and symmetric functions. Second edition. 
Graduate Texts in Mathematics, {\bf 203}. Springer-Verlag, New York, 2001.   
 
\bibitem[Sc27]{Schur} Schur, I.
{\"U}ber eine Klasse von Matrizen, die sich einer gegebenen 
Matrix zuordnen lassen. Sitzungsberichte der Berliner 
Mathematischen Gesellschaft {\bf 220} (1923), 9--20.
 
\bibitem[So02]{Solomon} Solomon, L. 
Representations of the rook monoid. 
J. Algebra {\bf 256} (2002), no. 2, 309--342.

\bibitem[St16]{8} Steinberg, B.
Representation theory of finite monoids.
Universitext. Springer, Cham, 2016, xxiv+317 pp.

\bibitem[Wa52]{Wagner} Wagner, V.
Generalized groups.
Doklady Akademii Nauk SSSR {\bf 84} (1952), 1119--1122.

\bibitem[We39]{Weyl} Weyl, H.
The Classical Groups. Their Invariants and Representations. 
Princeton University Press, Princeton, NJ, 1939. xii+302 pp.

\end{thebibliography}
\end{document}